\date{}
\newtheorem{theorem}{Theorem}[section]
\newtheorem*{theorem*}{Theorem}
\newtheorem{lemma}[theorem]{Lemma}
\newtheorem{cor}[theorem]{Corollary}
\newtheorem{defi}[theorem]{Definition}
\newtheorem{notation}[theorem]{Notation}
\newtheorem{prop}[theorem]{Proposition}
\theoremstyle{definition}
\newtheorem{Remark}[theorem]{Remark}
\newtheorem{Assumption}[theorem]{Assumption}
\newtheorem{Example}[theorem]{Example}
\theoremstyle{plain}
\newcommand{\N}{\mathbb{N}}
\newcommand{\R}{\mathbb{R}}
\newcommand{\C}{\mathbb{C}}
\newcommand{\Exp}{\mathbb{E}}
\newcommand{\Event}{\mathcal{E}}
\def\Prob{{\mathbb P}}
\def\Id{{\rm Id}}
\def\normal{{\mathfrak n}}
\def\spn{{\rm span\,}}
\def\col{{\rm col}}
\def\row{{\rm row}}
\def\Net{{\mathcal N}}
\title{On pseudospectrum of inhomogeneous non-Hermitian random matrices}
\author{
Konstantin Tikhomirov
}
\address{
\medskip
\noindent
Department of Mathematical Sciences\\
Carnegie Mellon University\\
Wean Hall 6113\\
Pittsburgh, PA 15213\\
\texttt{\small
e-mail:   ktikhomi@andrew.cmu.edu}
}
\thanks{The work is partially supported by the NSF Grant DMS 2054666}
\begin{document}

\maketitle

\begin{abstract}
Let $A$ be an $n\times n$
matrix with mutually independent centered Gaussian entries.
Define
\begin{align*}
\sigma^*:=\max\limits_{i,j\leq n}\sqrt{\Exp\,|A_{i,j}|^2},\quad
\sigma:=\max\bigg(\max\limits_{j\leq n}\sqrt{\Exp\,\|\col_j(A)\|_2^2},
\max\limits_{i\leq n}\sqrt{\Exp\,\|\row_i(A)\|_2^2}\bigg).
\end{align*}
Assume that $\sigma\geq n^\varepsilon\,\sigma^*$ for a constant $\varepsilon>0$,
and that a complex number $z$ satisfies
$|z|=\Omega(\sigma)$. We prove that
$$
s_{\min}(A-z\,\Id)
\geq |z|\,\exp\bigg(-n^{o(1)}\,\Big(\frac{\sqrt{n}\,\sigma^*}{\sigma}\Big)^2\bigg)
$$
with probability $1-o(1)$.
Without extra assumptions on $A$, the bound is optimal
up to the $n^{o(1)}$ multiple in the power of exponent. 
%Unlike earlier results, our estimate does not rely
%on any assumptions about locations of non-zero entries of $A$
%and does not require the matrix to be dense.
We discuss applications of this estimate in context
of empirical spectral distributions of inhomogeneous non-Hermitian random matrices.
%, in particular, non-Hermitian periodic band matrices.
\end{abstract}

%\tableofcontents

\section{Introduction}

For each $n\geq 1$, let $A_n$ be an $n\times n$ random matrix.
Denote by $\mu_n$ its empirical spectral distribution i.e a discrete probability measure
$$
\mu_n=\frac{1}{n}\sum_{i=1}^n \delta_{\lambda_i^{(n)}},
$$
where $\delta_x$ is the Dirac delta function with mass at $x$, and $\lambda_1^{(n)},\dots,\lambda_n^{(n)}$
are eigenvalues of $A_n$ arranged arbitrarily.

In the setting where each matrix $A_n$ has i.i.d entries,
study of the limiting behavior of $(\mu_n)_{n=1}^\infty$
is a well developed line of research within the random matrix theory.
In particular, if for every $n$ the entries of $A_n$ have variances $1/n$
and under very mild additional assumptions, the sequence $(\mu_n)_{n=1}^\infty$
is known to follow the {\it circular law} i.e converges weakly to
the uniform measure on the unit disc of the complex plane;
see \cite{B1997,BR2019,Ginibre,Girko,GT2010,PZ2010,RT2019,TV2008,TV2010,W2012}
as well as survey \cite{BC2012} and PhD thesis \cite{Cook2016} for more information.
We note here that
rescaled adjacency matrices of directed $d$--regular random graphs (for
$d=d(n)=\omega(1)$)
follow the same global limiting law as was established in \cite{Cook2019,BCZ,LLTTY}.

Matrices encountered in applications are often {\it structured},
with locations of zeros and relative magnitudes of the entries
determined by the nature of a problem (see, in particular, \cite{Olshevsky}).
Random matrices with identically distributed entries cannot serve as an
adequate model of a structured matrix, and, as a natural generalization step,
one can consider {\it inhomogeneous} non-Hermitian random matrices with mutually independent entries
having different variances and, in particular, allowing zero entries.
The spectral norm of such matrices, as well as their Hermitian counterparts, has been actively studied, see, in particular,
\cite{Latala,RS2013,BvH,vH17,BBvH,BvH22}.
On the other hand, the spectral distribution
is much less understood and, as of this writing,
has been investigated only in
specific settings. We refer to \cite{TV2010,CHNR1,CHNR2,JS21,AEK2018,AEK2021} for the study of structured dense matrices
(in particular, \cite{AEK2018,AEK2021} for local spectral distribution and spectral radius of inhomogeneous matrices with entries of comparable magnitudes)
and \cite{JJLO} for block-band random matrices.

\medskip

As of now, the mainstream approach to the study of 
spectrum of non-Hermitian matrices, developed by Girko \cite{Girko},
is a {\it Hermitization argument} in which the empirical spectral distribution of $A_n$
is related to the singular spectrum $s_1(A_n-z\,\Id)\geq s_2(A_n-z\,\Id)\geq\dots
\geq s_n(A_n-z\,\Id)$ of shifted matrices $A_n-z\,\Id$, $z\in\C$ via
the formula
\begin{equation}\label{akfgvytfvuqtcviyvc}
\prod_{i=1}^n |\lambda_i^{(n)}-z|=\big|\det(A_n-z\,\Id)\big|=\prod_{i=1}^n s_i(A_n-z\,\Id),
\end{equation}
so that a weak limit for the sequence $(\mu_n)_{n=1}^\infty$
(if it exists) can be identified by estimating the logarithm
of the right hand side of \eqref{akfgvytfvuqtcviyvc} for almost every $z\in\C$
(see \cite{BC2012,TV2010} for more information).
The latter, in turn, is usually split into two subproblems:
\begin{itemize}
\item[(I)] Computing a limit for sequence of random measures $\frac{1}{n}\sum_{i=1}^n \delta_{s_i^2(A_n-z\,\Id)}$, $n\geq 1$.
%$$
%\log\,\prod_{i:\,s_i(A_n-z\,\Id)\geq\varepsilon_n}^n s_i(A_n-z\,\Id),
%$$
%for a specific choice of $\varepsilon_n>0$.
\item[(II)] Proving that 
$s_{\min}(A_n-z\,\Id)=s_n(A_n-z\,\Id)\geq \varepsilon_n$, for a specific choice of $\varepsilon_n>0$,
with probability $1-o(1)$.
\end{itemize}
Problems (I) and (II) are of different nature.
For i.i.d and certain structured models, (I)
was successfully addressed by studying the Stieltjes
transform of the singular spectrum; we refer, in particular, to \cite[Chapter~11]{BS2010}
and survey \cite{BC2012} as well as recent research works
\cite{CHNR1,JJLO,BBvH} for details and further references.

Problem (II) is often approached with ``geometric''
methods based on evaluating distances between random vectors
and random subspaces associated with a random matrix.
There is substantial literature on the subject, and we refer to survey \cite{ICM} for more information
and further references.
However, most of the effort has been focused on matrices
with identically distributed entries, and results dealing with inhomogeneous matrices are scarce.
As an example of the latter, in \cite{RZ,CookInv} bounds
on $s_{\min}(A_n-z\,\Id)$ were obtained for matrices with
{\it broad connectivity} and {\it robust irreducibility} properties
which can be seen as relatives
of graph expansion.
Matrices with non-identically distributed entries of comparable magnitudes
were considered, in particular, in \cite{Galya,LTV,JS21}.
An argument based on the regularity lemma was developed in
\cite{CookInv} to deal with arbitrary structured random matrices provided that
the number of non-zero entries is a constant proportion of $n^2$.
Block-band matrices were recently considered in \cite{JJLO}.
Without the specific assumptions on locations of non-zero entries
or matrix density, quantifying invertibility of inhomogeneous matrices
has remained an open problem. 

\bigskip

The goal of this paper is to make progress on problem (II) in a general setting
of inhomogeneous matrices with mutually independent entires, without
any specific restrictions on the variance profile. The main result is

\begin{theorem}\label{alryywtvcutwercuqyt}
For every $R\geq 1$ and $\varepsilon\in(0,1]$
there is $n_0\in\N$
depending on $R$ and $\varepsilon$ with the following property.
Let $n\geq n_0$,
and let $A$ be an $n\times n$ non-zero random matrix
with mutually independent centered real Gaussian entries.
Define
$$
\sigma^*:=\max\limits_{i,j\leq n}\sqrt{\Exp\,|A_{i,j}|^2},\quad
\sigma:=\max\bigg(\max\limits_{j\leq n}\sqrt{\Exp\,\|\col_j(A)\|_2^2},
\max\limits_{i\leq n}\sqrt{\Exp\,\|\row_i(A)\|_2^2}\bigg).
$$
Assume additionally that a complex number $z$ satisfies
$$
|z|\geq \max\Big(\sigma^*\,n^{\varepsilon},\frac{\sigma}{R}\Big).
$$
Then with probability at least $1-\frac{1}{n}$ we have
$$
s_{\min}(A-z\,\Id)\geq |z|\,\exp\bigg(-n^{\varepsilon}\,\Big(\frac{\sqrt{n}\,\sigma^*}{\sigma}\Big)^2\bigg).
$$
\end{theorem}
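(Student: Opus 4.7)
The plan is to follow the $\varepsilon$-net plus small-ball framework (à la Rudelson--Vershynin and Tao--Vu) with both the net and the anti-concentration bound calibrated to the inhomogeneous variance profile $V_{i,j} := \Exp|A_{i,j}|^2$. Write $M := A-z\,\Id$ and $\delta := |z|\exp(-n^\varepsilon(\sqrt n\,\sigma^*/\sigma)^2)$; the objective is to show $\inf_{x\in S^{n-1}}\|Mx\|_2 \geq \delta$ with probability at least $1-1/n$. I would first restrict to the high-probability event on which $\|A\|_{\mathrm{op}} \leq C\sigma$ (via the Bandeira--van Handel spectral-norm inequality), $|A_{i,j}| \leq K\sigma^*\sqrt{\log n}$ for all $i,j$, and every row/column norm is of order $\sigma$. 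On this event $\|M\| \leq (CR+1)|z|$, and the problem reduces to bounding $\inf_x \|Mx\|_2$ deterministically on the remaining randomness.

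Next, I would split $S^{n-1}$ into compressible vectors (within $\rho_0$ of a $k_0$-sparse vector, with $k_0 := n^{2\varepsilon}$ and $\rho_0$ a small constant) and their complement. On the compressible part: any Gaussian submatrix of $A$ on $\leq k_0$ columns has spectral norm $\lesssim \sqrt{k_0 n}\,\sigma^* = n^{1/2+\varepsilon}\,\sigma^*$, and combined with the diagonal term $-z\,\Id$ one obtains $\|Mx\|_2 \gtrsim |z| \gg \delta$ uniformly, using the hypothesis $|z|\geq n^\varepsilon\sigma^*$ crucially to beat the submatrix norm on the support.

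For incompressible $x$, $Mx$ is Gaussian in $\R^n$ with mean $-zx$ and diagonal covariance $\Sigma_x = \mathrm{diag}(\sigma_{i,x}^2)_{i=1}^n$, where $\sigma_{i,x}^2 := \sum_j x_j^2 V_{i,j}$ satisfies $\sigma_{i,x}^2 \leq (\sigma^*)^2$ and $\sum_i \sigma_{i,x}^2 \leq \sigma^2$. The Gaussian density bound gives
\[
\Prob\bigl(\|Mx\|_2 \leq t\bigr) \,\leq\, \frac{V_n\,t^n}{(2\pi)^{n/2}\sqrt{\det \Sigma_x}}\,,
\]
of order $(Ct/\sigma)^n$ in the ``balanced'' regime where $\sigma_{i,x}^2\approx\sigma^2/n$ for most $i$. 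An $\eta$-net of the incompressible sphere at scale $\eta := \delta/\|M\|$ has cardinality $\exp(O(n\log(|z|/\delta))) = \exp(O(n^{1+\varepsilon}\cdot n(\sigma^*/\sigma)^2))$, which matches the small-ball exponent to leading order; the slack provided by the $n^\varepsilon$ factor in the definition of $\delta$ (plus the mean-shift contribution $\exp(-c\mu^T\Sigma_x^{-1}\mu)$ in the density) yields the buffer needed to close the union bound and recover the extra $1/n$ factor.

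The principal obstacle is that $\det\Sigma_x$ can degenerate: when $x$ concentrates on columns along which $V$ vanishes on a large row-set, some $\sigma_{i,x}^2$ are nearly zero and the density bound above is useless. I expect the technical heart of the proof to be a dichotomy sensitive to the bipartite graph of $V$: $x$ for which $\Sigma_x$ is ill-conditioned are peeled off by iteratively restricting to the rows and columns along which $V$ remains active on the current support, producing a chain of reductions to smaller effective problems to be absorbed into the compressible case. Quantitative tracking of how the effective dimension shrinks under this peeling, and of how that shrinkage propagates into the product $\prod_i \sigma_{i,x}$ appearing in the density bound, is what determines the exact exponent $n^\varepsilon(\sqrt n\,\sigma^*/\sigma)^2$ in the statement.
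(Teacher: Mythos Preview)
Your $\varepsilon$-net plus Gaussian small-ball scheme does not close, and this is not just a matter of the degenerate-$\Sigma_x$ case you flag at the end. Even in the ``balanced'' regime $\sigma_{i,x}^2\approx\sigma^2/n$, the small-ball bound you quote is $(Ct/\sigma)^n$, while the net at mesh $\eta=\delta/\|M\|$ has size $(C|z|/\delta)^n$; their product is $(C|z|/\sigma)^n$ times a mean-shift factor $\exp(-cn|z|^2/\sigma^2)$. Since the hypothesis only gives $|z|\ge\sigma/R$, for large $R$ the constants do not align and the union bound is not summable. More fundamentally, for a generic inhomogeneous profile the covariance $\Sigma_x$ is singular on a set of incompressible $x$ that is far too large to be ``absorbed into the compressible case'': for a band matrix of width $d$, any $x$ supported on an interval of length $d$ is incompressible under your threshold $k_0=n^{2\varepsilon}$ yet has $\det\Sigma_x=0$. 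Your compressible bound is also off: restricting to $k_0$ columns, the operator norm of $A_{[n],J}$ is of order $\sigma+\sqrt{k_0}\,\sigma^*$, and the $\sigma$ term (which can be $R|z|$) already kills the comparison with $|z|$.

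The paper does not use a net on the sphere at all. It iterates the column-distance reduction: starting from $A-z\,\Id$, it separates columns into ``dense'' ($\sum_i V_{ij}^2>L^2$ with $L=n^{-\kappa}|z|$) and ``sparse'', and for each dense column $j$ passes to the principal submatrix on $J\setminus(U_j\cup\{j\})$, where $U_j$ is the effective row-support of column $j$. This defines a directed graph $\mathcal G$ on submatrices; if $s_{\min}(A-z\,\Id)$ is small one finds a path from the source to a terminal along which the \emph{normalized} inner products $\langle \mathfrak n_{J,j},\col_j(A_J-z\,\Id)\rangle/\big(\sum_k|(\mathfrak n_{J,j})_k|^2 V_{kj}\big)^{1/2}$ are small, and these are controlled by one-dimensional anti-concentration plus telescopic conditioning. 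The union bound is over paths in $\mathcal G$ (at most $(n^2)^{O(n(\sigma^*)^2/L^2)}$ of them, subexponential in $n$), not over net points. Terminals with all columns sparse are handled deterministically by a block Gershgorin argument after a row-peeling that produces a near-upper-triangular block structure. The ``peeling sensitive to the bipartite graph of $V$'' you gesture at is in spirit the right idea, but its correct implementation is this submatrix recursion with telescopic anti-concentration, and it replaces---rather than supplements---the net argument.
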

In fact, we prove a more general result for random matrices with independent subgaussian
entries with bounded distribution densities (see Theorem~\ref{qiuyefqiwgfqvuq}).
The results
can be viewed as statements about coverage functions of the pseudospectrum of 
random inhomogeneous matrices.
Specifically, fix a constant $\varepsilon>0$, and for each $n\geq 1$ let
$A_n$ be an $n\times n$ matrix with independent centered real Gaussian entries such that
$$
\sigma_n^*=\max\limits_{i,j\leq n}\sqrt{\Exp\,\big|(A_n)_{i,j}\big|^2}\quad\mbox{ and }\quad
\sigma_n=\max\bigg(\max\limits_{j\leq n}\sqrt{\Exp\,\|\col_j(A_n)\|_2^2},
\max\limits_{i\leq n}\sqrt{\Exp\,\|\row_i(A_n)\|_2^2}\bigg)
$$
satisfy $n^\varepsilon\,\sigma^*_n\leq \sigma_n=O(1)$.
Further, set $\delta_n:=\exp\big(-n^{\varepsilon}\,\big(\frac{\sqrt{n}\,\sigma_n^*}{\sigma_n}\big)^2\big)$,
$n\geq 1$.
The {\it $\delta_n$--pseudo\-spectrum} of the random matrix $A_n$ is defined as a random set
$$
\Lambda_{\delta_n}(A_n):=\big\{\lambda\in \C:\;s_{\min}(A_n-\lambda\,\Id)\leq \delta_n\big\},
$$
and the {\it one-point coverage function} $p_{\Lambda_{\delta_n}(A_n)}:\C\to[0,1]$
is given by
$$
p_{\Lambda_{\delta_n}(A_n)}(w):=\Prob\big\{w\in \Lambda_{\delta_n}(A_n)\big\},\quad w\in\C
$$
(see, for example, \cite[p.~33]{Molchanov}).
Theorem~\ref{alryywtvcutwercuqyt} then implies that for every non-zero $z\in\C$,
$p_{\Lambda_{\delta_n}(A_n)}(z)=o(1)$.

%Note that in the regime $n\to\infty$, the lower bound on $s_{\min}(A-z\,\Id)$ can be written as
%$$
%\Prob\bigg\{
%s_{\min}(A-z\,\Id)\geq |z|\,\exp\bigg(-n^{o(1)}\,\Big(\frac{\sqrt{n}\,\sigma^*}{\sigma}\Big)^2\bigg)
%\bigg\}=1-o(1).
%$$ 

\medskip

The ratio $\frac{\sqrt{n}\,\sigma^*}{\sigma}$ in Theorem~\ref{alryywtvcutwercuqyt}
should be interpreted as a measure
of sparsity; for example, if $A$ is a standard $n\times n$ Gaussian matrix then
$\frac{\sqrt{n}\,\sigma^*}{\sigma}=1$ whereas for a diagonal matrix with standard Gaussians
on the diagonal, $\frac{\sqrt{n}\,\sigma^*}{\sigma}=\sqrt{n}$.
As a more general example, fix a constant $\varepsilon\in(0,1]$ and for each $n$
let $d_n$ be an integer in the interval $[n^\varepsilon,n]$
and let $V_n$ be an $n\times n$ deterministic $0/1$--matrix with at most $d_n$
ones in every row and column. Define $A_n$ as the Hadamard (entry-wise)
product of $\frac{1}{\sqrt{d_n}}V_n$ with a standard $n\times n$ real Gaussian matrix.
Theorem~\ref{alryywtvcutwercuqyt} then implies
that for every non-zero complex number $z$,
$$
\Prob\bigg\{
s_{\min}(A_n-z\,\Id)\geq \exp\bigg(-n^{o(1)}\;\frac{n}{d_n}\bigg)
\bigg\}=1-o(1).
$$
Without any extra assumptions on the variance profiles $V_n$, this lower bound
is best possible up to the multiple $n^{o(1)}$ in the exponent.
We refer to the beginning
of Section~\ref{asytfvufytviyvbbntr} for details.

To our best knowledge, Theorem~\ref{alryywtvcutwercuqyt}
is the first result in literature which provides quantitative
bounds on the smallest singular value of inhomogeneous matrices
without special assumptions on the structure
(such as expansion-like properties or decomposition into ``homogeneous'' blocks)
or matrix density\footnote{The drawback of this generality is suboptimal
estimates for some specific choices of variance profile. In particular, we conjecture
that in the setting of doubly stochastic profiles i.e under the extra assumptions that
$\Exp\,\|\col_j(A)\|_2^2=1$, $j\leq n$, $\Exp\,\|\row_i(A)\|_2^2=1$, $i\leq n$,
the lower bound on $s_{\min}(A-z\,\Id)$ can be significantly improved.}.
We expect that it will lead to new results on the spectrum of non-Hermitian
structured random matrices, assuming complementary advances on subproblem (I)
from our earlier discussion.% (that analytical part of the Hermitization argument
%is not a focus of our work).

As an illustration, we consider the well known problem of identifying
the limiting spectral distribution of non-Hermitian random periodic band matrices.
For each $n$,
assume that $B_n$ is an $n\times n$ matrix where the entries are mutually independent
and the $(i,j)$--th entry is standard Gaussian if and only if $(i-j) \mod n\leq w_n$ or $(j-i) \mod n\leq w_n$
(and all other entries are zeros). The parameter $w_n\leq n/2$ is the bandwidth.
In the setting where $\frac{w_n}{n}=\Omega(1)$, works \cite{CookInv,CHNR1} imply
the circular law for the sequence of
empirical spectral distributions of matrices $\frac{1}{\sqrt{2w_n+1}}B_n$.
However, the case of power-law decay, with $\frac{w_n}{n}\leq n^{-\varepsilon}$ for a fixed $\varepsilon>0$,
has not been covered by any existing results. 
%We note here that although the block-band
%model studied in \cite{JJLO} appears closely related, 
%the argument for estimating the smallest singular value
%from that work cannot be transferred to the band matrix setting.
In Section~\ref{fiauftucytvuyrbiuehbiu} of this paper, we apply Theorem~\ref{alryywtvcutwercuqyt}
together with results from \cite{JS,JJLO} to derive
\begin{cor}[Circular law for periodic band matrices]\label{ajhfbaifhbijvnjgvaigvi}
There is a universal constant $c>0$ with the following property.
Let $(w_n)_{n\geq 1}$ be a sequence of integers where for each large $n$, $w_n$ satisfies
$$
n^{33/34}\leq w_n< cn.
$$
Then the sequence of
empirical spectral distributions of matrices $\frac{1}{\sqrt{2w_n+1}}B_n$
converges weakly in probability to
the uniform measure on the unit disc of the complex plane.
\end{cor}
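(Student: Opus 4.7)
The approach is Girko's Hermitization. Set $A_n := (2w_n+1)^{-1/2} B_n$; the entries of $A_n$ are mutually independent centered Gaussians with
\begin{equation*}
\sigma_n^* = (2w_n+1)^{-1/2}, \qquad \sigma_n = 1, \qquad \Big(\frac{\sqrt{n}\,\sigma_n^*}{\sigma_n}\Big)^2 = \frac{n}{2w_n+1} \leq n^{1/34}/2
\end{equation*}
under the hypothesis $w_n \geq n^{33/34}$. Writing $\nu_n^z$ for the empirical singular value distribution of $A_n - z\,\Id$, the standard Hermitization criterion (see \cite{BC2012,TV2010}) reduces the corollary to verifying, for Lebesgue-almost every $z \in \C$,
\begin{equation*}
\int_0^\infty \log t\, d\nu_n^z(t) \;\longrightarrow\; \int_\C \log|w-z|\,d\mu_{\mathrm{circ}}(w)\quad \text{in probability},
\end{equation*}
together with an a priori operator norm bound $\|A_n\|=O(1)$ with high probability, which is standard for this model via \cite{BvH,BBvH}.

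The right-hand side coincides with $\int \log t\,d\nu_z^\infty(t)$, where $\nu_z^\infty$ is the Ginibre-type limit that governs $\nu_n^z$ for the present band model; this weak convergence is the content of \cite{JS,JJLO}. To upgrade weak convergence of $\nu_n^z$ to convergence of the logarithmic integral, I need uniform integrability of $\log t$ near zero. This is where Theorem~\ref{alryywtvcutwercuqyt} enters. For any fixed nonzero $z$ with $|z|\leq 2$ and any sufficiently small $\varepsilon>0$, the hypotheses $|z|\geq \sigma_n^* n^\varepsilon$ (since $\sigma_n^*\leq n^{-33/68}$) and $|z|\geq \sigma_n/R$ with $R:=2/|z|$ hold for all large $n$, so Theorem~\ref{alryywtvcutwercuqyt} yields with probability at least $1-1/n$
\begin{equation*}
s_{\min}(A_n-z\,\Id) \;\geq\; |z|\exp\bigl(-n^{\varepsilon+1/34}\bigr),
\end{equation*}
that is, $\frac{1}{n}|\log s_{\min}(A_n-z\,\Id)|=o(1)$, which is much stronger than the $n^{-O(1)}$ bound normally required for Hermitization.

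With these ingredients I would run the by-now standard dyadic scheme of Tao--Vu and Bordenave--Chafa\"{\i}--Caputo: split the integration range for $\log t$ under $\nu_n^z$ at thresholds $s_{\min}$, $n^{-C}$, and a small constant; the ultra-small regime is absorbed by the exponential $s_{\min}$ bound above, the bulk by the weak convergence $\nu_n^z\Rightarrow\nu_z^\infty$ together with the operator norm bound, and the intermediate regime by a count on the number of singular values of $A_n-z\,\Id$ in $[n^{-C},c]$. Combined with Girko's lemma, this yields $\mu_n \Rightarrow \mu_{\mathrm{circ}}$ in probability.

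The main obstacle is precisely the intermediate regime: Theorem~\ref{alryywtvcutwercuqyt} controls only the single smallest singular value, whereas uniform integrability of $\log t$ requires bounding the count of singular values below $n^{-\alpha}$ for some range of $\alpha$. I would extract this either from quantitative (local-law type) forms of the convergence $\nu_n^z \Rightarrow \nu_z^\infty$ that are already present in \cite{JS,JJLO} and bound $\mathrm{Im}\,m_{\nu_n^z}(E+i\eta)$ at scales $\eta=n^{-\alpha}$, or, barring that, by applying Theorem~\ref{alryywtvcutwercuqyt} to $(n-k)\times(n-k)$ principal submatrices of $A_n-z\,\Id$ (a standard device) to deduce $s_{n-k}(A_n-z\,\Id) \geq |z|\exp(-n^{1/34+o(1)})$ for all $k$ up to a suitable polynomial in $n$. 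Either route suffices to close the Hermitization argument.
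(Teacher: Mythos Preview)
Your outline is in the right spirit, but the paper takes a cleaner route that sidesteps the intermediate-regime issue you flag. Rather than proving uniform integrability of $\log t$ against $\nu_n^z$ directly, the paper packages everything through the Tao--Vu \emph{replacement principle} (Corollary~\ref{aofiffaifuhbvjavbajv}): one only needs $\frac{1}{n}\bigl|\log|\det(A_n-z\,\Id)|-\log|\det(G_n-z\,\Id)|\bigr|\to 0$. An integration-by-parts identity (\cite[Lemma~4.3]{JJLO}) bounds this difference by a constant times $\log(1/s_{\min})\cdot\sup_x|\nu_{A_n}((-\infty,x])-\nu_{G_n}((-\infty,x])|$. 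The first factor is $O(n^{1/34+\varepsilon})$ by Theorem~\ref{alryywtvcutwercuqyt}; the second is the Kolmogorov distance, for which \cite{JJLO} (essentially \cite[Lemma~4.4]{JJLO}, building on \cite{JS}) gives the quantitative bound $O\bigl((n\log n/w_n^2)^{1/31}\bigr)$. The arithmetic $\tfrac{1}{34}-\tfrac{32}{34\cdot 31}<0$ then yields the threshold $w_n\geq n^{33/34}$. Your first workaround is thus essentially what the paper does, except that what \cite{JS,JJLO} actually supply is a Kolmogorov-distance estimate rather than a local law at varying scales $\eta=n^{-\alpha}$; combined with integration by parts this is precisely enough, and the comparison with Ginibre absorbs the intermediate regime automatically.

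Your second workaround, however, does not help. It is true that $s_{n-k}(M)\geq s_{\min}(M_J)$ for any principal $(n-k)\times(n-k)$ submatrix $M_J$ (via two applications of Cauchy interlacing), so Theorem~\ref{alryywtvcutwercuqyt} applied to $(A_n)_J-z\,\Id$ yields $s_{n-k}(A_n-z\,\Id)\geq|z|\exp(-n^{1/34+o(1)})$. But this is the \emph{same} lower bound as for $s_{\min}$, hence already implied by $s_{n-k}\geq s_n$; it says nothing about how many singular values lie in the intermediate window $[\exp(-n^{1/34+o(1)}),\delta]$, which is what uniform integrability requires. So this route does not close the argument, and you would still need the quantitative Kolmogorov input from \cite{JJLO}.
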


\bigskip

As the final part of the introduction, we give a high-level
overview of the proof of the main result.
To be able to present the idea concisely, we will hide certain technical details;
for that reason the outline below should not be viewed as a perfectly fair description
of the actual proof.
The basic principle is, for
every point of the event ``$s_{\min}(A-z\,\Id)$ is small'', to identify
a nested sequence of submatrices of $A-z\,\Id$
satisfying some rare properties. The condition that the properties are unlikely
implies that the event ``$s_{\min}(A-z\,\Id)$ is small''
has small probability.
To be more specific, for every subset $J\subset[n]$ let $A_J$ be the $J\times J$
principal submatrix of $A$. Then, at every point of the event in question, we find a number $d\geq 0$,
a sequence of subsets $[n]=:J_0\supset J_1\supset J_2\dots\supset J_d$,
and indices $j_0\in J_0\setminus J_1$, $j_1\in J_1\setminus J_2$, $\dots$, $j_{d-1}
\in J_{d-1}\setminus J_d$ such that for each $\ell\in\{0,1,\dots,d-1\}$,
the distance from the column $\col_{j_\ell}(A_{J_\ell}-z\,\Id)$
to the span of columns $E_\ell:=\spn\big\{\col_{k}(A_{J_\ell}-z\,\Id), \;k\in J_\ell\setminus\{j_\ell\}\big\}$
is much less than the ``typical'' distance %(conditional on $E_\ell$)
guaranteed by basic anti-concentration
estimates for linear combinations of random variables with bounded distribution densities
(see Lemma~\ref{auevfiwuyfviygvcikygviygv} below).
The mutual independence of the entries of $A$ and a telescopic
conditioning argument imply that the probability of the event
``$s_{\min}(A-z\,\Id)$ is small'' can be bounded by the product of the probabilities
that $\col_{j_\ell}(A_{J_\ell}-z\,\Id)$ is close to $E_\ell$, $\ell=0,\dots,d-1$.
The actual situation is more involved since the nested sequence $[n]=:J_0\supset J_1\supset J_2\dots\supset J_d$
and the column indices are random, introducing complicated dependencies in matrices $A_{J_\ell}$.
As a natural decoupling argument, we compute the product of probabilities
for every admissible deterministic choice of the sets $J_\ell$ and indices $j_\ell$, $\ell\geq 0$,
and take the union probability bound over the admissible choices
as an upper bound for $\Prob\{s_{\min}(A-z\,\Id)\mbox{ is small}\}$.
The term ``admissible'' here does not mean any of superexponentially many nested sequences
and instead is determined by structure of supports of the matrix columns.
As a way to define and navigate through the admissible choices of the subsets and indices,
we introduce a directed graph whose vertices are principal submatrices
of $A-z\,\Id$, so that each admissible sequence $(J_\ell)_{\ell=0}^d$
is associated with a path on that graph (the graph is constructed in Subsection~\ref{alrberufvifyviyqviv}).
Each admissible path ends either at an ``empty'' matrix or at a submatrix of $A-z\,\Id$
with certain conditions on expected squared norms of its columns
(we call the latter {\it non-empty terminals}).
An important step in estimating $\Prob\{s_{\min}(A-z\,\Id)\mbox{ is small}\}$
is a uniform quantitative control of invertibility of the non-empty terminals.
That task is split into three substeps: certain Gershgorin--type
estimate for block matrices (Subsection~\ref{tkhjniuefuytvcutv}),
a uniform bound on norms of submatrices of $A$ (Subsection~\ref{aagvcuytwevutwcvrg}), and 
construction of a special block decomposition of the terminals (Subsection~\ref{aefiwtvcytarcytefufwa}).

%The paper is organized as follows.
%In Section~\ref{alueyfgiuyiuwebubfh}
%we recall some basic definitions and results on concentration/anti-concentration
%of linear combinations of random variables;
%state and prove a simple Gershgorin--type estimate on the smallest
%singular value of block matrices;
%derive a uniform upper bound on spectral norms of submatrices
%of an inhomogeneous random matrix.

\bigskip

{\bf{}Acknowledgment.}
The author is grateful to Han Huang for helpful discussions.

\section{Notation and preliminaries}\label{alueyfgiuyiuwebubfh}

\subsection{Notation and definitions}

\begin{notation}[Submatrices]
Given an $n\times n$ matrix $M$ and non-empty subsets $I,J\subset[n]$,
denote by $M_{I,J}$ the submatrix of $M$ obtained by removing rows indexed over $I^c$ 
and columns indexed over $J^c$. We will assume that the entries of $M_{I,J}$
are indexed over the product $I\times J$.
When $I=J$ we will write $M_I$ instead of $M_{I,I}$.
\end{notation}

\begin{notation}[Empty matrix]
We will write $[\;]$ to denote an ``empty'' zero by zero matrix.
\end{notation}

\begin{notation}[Identity matrix]
We will denote identity matrices by $\Id$. The matrix dimensions shall always be clear from context.
\end{notation}

\begin{notation}[Matrix norms]
Given a $k\times\ell$ matrix $M$, we denote by $\|M\|_{HS}$ its Hilbert--Schmidt norm.
Further, we define $\|M\|_{\infty\to 2}$ as the supremum of $\|Mx\|_2$ over all vectors $x\in\C^\ell$
with unit $\|\cdot\|_\infty$--norm.
\end{notation}

\begin{notation}[Product of vectors]
Given two $m$--dimensional complex vectors $x=(x_1,\dots,x_m)$ and $y=(y_1,\dots,y_m)$,
we denote by $\langle x,y\rangle$ the sum
$$
\langle x,y\rangle=\sum_{i=1}^m x_i\,y_i.
$$
Note that this definition differs from that of the standard inner product of two vectors $x,y$ in $\C^m$, 
$\sum_{i=1}^m x_i\,y_i^*$.
\end{notation}

\begin{notation}[Concatenation of vectors]
Given vectors $x^{(1)}\in\C^{m_1}$, $x^{(2)}\in\C^{m_2}$, $\dots$, $x^{(k)}\in\C^{m_k}$,
denote by $\oplus_{j\leq k}\; x^{(j)}=x^{(1)}\oplus x^{(2)}\oplus\dots\oplus x^{(k)}$
their concatenation.
\end{notation}

\begin{notation}[Vectors in $\C^I$]
Given a non-empty finite set $I$, denote by $\C^I$ the complex space
of $|I|$--dimensional vectors with vector components indexed over $I$.
\end{notation}

\begin{defi}[Subgaussian variables and subgaussian norm]
Let $K\geq 1$. We say that a real or complex variable $\xi$ is {\bf $K$--subgaussian}
if
$$
\Exp\,\exp(|\xi|^2/K^2)\leq 2.
$$
We will denote the smallest $K$ such that $\xi$ is $K$--subgaussian, by $\|\xi\|_{\psi_2}$.
\end{defi}

\subsection{Subgaussian concentration}\label{aljrhgbeiuavucvutv}

\begin{theorem}[{See, for example, \cite[Section~2.6]{VershyninBook}}]\label{akjfgifyutviywvgcywv}
There is a universal constant $C_{\text{\tiny\ref{akjfgifyutviywvgcywv}}}>0$
with the following property.
Let $X_1,\dots,X_m$ be mutually independent subgaussian random variables,
and let $a_1,\dots,a_m\in\C$ be any scalars. Then the linear combination $\sum_{i=1}^m a_i\,X_i$
is subgaussian, with
$$
\Big\|
\sum_{i=1}^m a_i\,X_i
\Big\|_{\psi_2}^2
\leq C_{\text{\tiny\ref{akjfgifyutviywvgcywv}}}\,
\sum_{i=1}^m |a_i|^2\,\|X_i\|_{\psi_2}^2.
$$
\end{theorem}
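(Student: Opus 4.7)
The plan is to reduce the statement to the classical moment-generating-function argument for real, centered, independent subgaussian summands. The result is standard (it is explicitly cited from \cite[Section~2.6]{VershyninBook}), so the main work is in bookkeeping universal constants, not in finding a new idea.

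First I would record the equivalences between the $\psi_2$-norm and two other measures of subgaussianity: the moment growth $\sup_{p\geq 1} p^{-1/2}(\Exp\,|\xi|^p)^{1/p}$, and, for a real centered variable, the smallest constant $K$ such that $\Exp\,\exp(\lambda\xi)\leq \exp(C\lambda^2 K^2)$ for every real $\lambda$. These equivalences hold up to absolute constants and in particular yield $|\Exp\,\xi|\leq C\|\xi\|_{\psi_2}$ and $\|\xi-\Exp\,\xi\|_{\psi_2}\leq C\|\xi\|_{\psi_2}$, so one may replace each $X_i$ by its centered version at the cost of a universal constant and an additive term $\sum_i a_i\,\Exp\,X_i$ that is controlled by Cauchy--Schwarz in terms of $\bigl(\sum_i |a_i|^2\|X_i\|_{\psi_2}^2\bigr)^{1/2}$.

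Next I would reduce from complex scalars and complex variables to the real case. Writing $a_i=\alpha_i+i\beta_i$ and $X_i=Y_i+iZ_i$, the sum $\sum_i a_i X_i$ splits into four real sums $\sum_i \alpha_i Y_i$, $\sum_i\beta_i Z_i$, $\sum_i \alpha_i Z_i$, $\sum_i\beta_i Y_i$. Since $\|Y_i\|_{\psi_2},\|Z_i\|_{\psi_2}\leq \|X_i\|_{\psi_2}$ and $\alpha_i^2+\beta_i^2=|a_i|^2$, and since the $\psi_2$-quantity is subadditive up to an absolute constant, it suffices to prove the analogous inequality for real centered independent summands with real coefficients.

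Finally, in the real centered case I would use independence together with the MGF characterization from the first step:
$$
\Exp\,\exp\Big(\lambda\sum_i a_i X_i\Big)=\prod_i \Exp\,\exp(\lambda a_i X_i)\leq \prod_i\exp\bigl(C\lambda^2 a_i^2\|X_i\|_{\psi_2}^2\bigr)=\exp\Big(C\lambda^2\sum_i a_i^2\|X_i\|_{\psi_2}^2\Big).
$$
This is exactly the MGF description of a centered subgaussian variable with $\psi_2$-norm bounded by $C'\bigl(\sum_i a_i^2\|X_i\|_{\psi_2}^2\bigr)^{1/2}$, so converting back via the equivalences of the first step completes the argument. The only real obstacle is cosmetic: tracking the absolute constants through each reduction (complex--to--real, centering, MGF--to--$\psi_2$) so that one ends up with a single universal constant $C_{\text{\tiny\ref{akjfgifyutviywvgcywv}}}$ in the stated inequality.
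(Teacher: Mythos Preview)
The paper does not prove this theorem; it is stated as a known result with a reference to \cite[Section~2.6]{VershyninBook} and used as a black box. Your proposal is precisely the standard textbook argument one finds there (characterizations of subgaussianity, centering, reduction to real summands, then the MGF product computation), and it is correct as written.
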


\begin{theorem}[{See, for example, \cite[Section~3.1]{VershyninBook}}]\label{aiuyfgquytfeuqytvcytwqv}
For every $K\geq 1$ there are $C_{\text{\tiny\ref{aiuyfgquytfeuqytvcytwqv}}},
c_{\text{\tiny\ref{aiuyfgquytfeuqytvcytwqv}}}>0$ depending on $K$
with the following property.
Let
$X=(X_1,\dots,X_m)$ be a vector of mutually independent $K$--subgaussian
random variables.
Then the Euclidean norm $\|X\|_2$ satisfies
$$
\Prob\big\{\|X\|_2\geq C_{\text{\tiny\ref{aiuyfgquytfeuqytvcytwqv}}}\,\sqrt{n}+t\big\}
\leq 2\exp\big(-c_{\text{\tiny\ref{aiuyfgquytfeuqytvcytwqv}}}\,t^2\big),\quad t\geq 0.
$$
\end{theorem}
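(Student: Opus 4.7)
The plan is to reduce the statement to Theorem \ref{akjfgifyutviywvgcywv} via a covering (\emph{$\varepsilon$-net}) argument on the Euclidean unit sphere. By duality, $\|X\|_2=\sup_{v\in S^{m-1}}\langle v,X\rangle$. A standard volume estimate produces a subset $\Net\subset S^{m-1}$ of cardinality at most $5^m$ such that every $u\in S^{m-1}$ lies within Euclidean distance $1/2$ of some $v\in\Net$. A routine approximation step (take $u^*$ optimizing the supremum defining $\|X\|_2$, pick $v\in\Net$ with $\|u^*-v\|_2\le 1/2$, and note $|\langle u^*-v,X\rangle|\le \tfrac12\|X\|_2$) yields the deterministic inequality
$$\|X\|_2\le 2\,\max_{v\in\Net}|\langle v,X\rangle|.$$

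Next, for each fixed $v\in\Net$, the scalar $\langle v,X\rangle=\sum_{i=1}^m v_i X_i$ is a linear combination of independent $K$-subgaussian random variables with coefficients of unit $\ell_2$-norm, so Theorem \ref{akjfgifyutviywvgcywv} gives $\|\langle v,X\rangle\|_{\psi_2}^2\le C_{\text{\tiny\ref{akjfgifyutviywvgcywv}}}K^2$. Applying Markov's inequality to $\exp(\xi^2/\|\xi\|_{\psi_2}^2)$ (which is $\le 2$ in expectation by definition of the subgaussian norm) gives the tail bound
$$\Prob\big\{|\langle v,X\rangle|\ge s\big\}\le 2\exp\!\bigg(-\frac{s^2}{C_{\text{\tiny\ref{akjfgifyutviywvgcywv}}}K^2}\bigg),\qquad s\ge 0.$$

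A union bound over $\Net$ then yields, for every $s\ge 0$,
$$\Prob\{\|X\|_2\ge 2s\}\le \Prob\Big\{\max_{v\in\Net}|\langle v,X\rangle|\ge s\Big\}\le 2\cdot 5^m\exp\!\bigg(-\frac{s^2}{C_{\text{\tiny\ref{akjfgifyutviywvgcywv}}}K^2}\bigg).$$
Choosing $s=C_1 K\sqrt m+t/2$ with $C_1$ sufficiently large so that $m\log 5-C_1^2 m/(2C_{\text{\tiny\ref{akjfgifyutviywvgcywv}}})\le -C_1^2 m/(4C_{\text{\tiny\ref{akjfgifyutviywvgcywv}}})$, the entropy factor $5^m$ is absorbed into the exponent. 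Combining with $\|X\|_2\le 2\max_{v\in\Net}|\langle v,X\rangle|$ gives
$$\Prob\{\|X\|_2\ge 2C_1 K\sqrt m+t\}\le 2\exp(-c_1 t^2/K^2),$$
which is the desired estimate with $C_{\text{\tiny\ref{aiuyfgquytfeuqytvcytwqv}}}:=2C_1 K$ and $c_{\text{\tiny\ref{aiuyfgquytfeuqytvcytwqv}}}:=c_1/K^2$.

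The main obstacle is essentially nonexistent: the argument is a textbook entropy-plus-concentration estimate. The only point that requires any care is calibrating $C_1$ so that the net cardinality $5^m$ is safely absorbed by the Gaussian tail at scale $K\sqrt m$; quantitatively, any $C_1\geq 2\sqrt{C_{\text{\tiny\ref{akjfgifyutviywvgcywv}}}\log 5}$ suffices. Everything else—duality for $\|X\|_2$, the $1/2$-net approximation inequality, the subgaussian coefficient bound, and the Markov-to-tail conversion—is routine once Theorem \ref{akjfgifyutviywvgcywv} is available.
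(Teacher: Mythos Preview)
Your argument is correct. The paper itself does not give a proof of this statement; it is quoted as a known result with a pointer to \cite[Section~3.1]{VershyninBook}. The proof in that reference takes a different route from yours: rather than discretizing the sphere, one writes $\|X\|_2^2=\sum_i X_i^2$ as a sum of independent sub-exponential variables, applies a Bernstein-type inequality to obtain concentration for $\|X\|_2^2$, and then passes to $\|X\|_2$. Your $\varepsilon$-net approach is equally valid and, given that Theorem~\ref{akjfgifyutviywvgcywv} is already available, arguably more self-contained; the Bernstein route yields sharper two-sided concentration around $(\Exp\,\|X\|_2^2)^{1/2}$, but for the one-sided upper tail stated here the two methods are equivalent in strength. (One cosmetic point: the paper's statement writes $\sqrt{n}$ while the vector has $m$ components---your proof correctly uses $\sqrt{m}$, matching how the theorem is applied later in Lemma~\ref{aiueyfgiwyetfciavvbva}.)
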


\subsection{Anti-concentration of combinations of r.v.'s with bounded distribution densities}

\begin{lemma}\label{auevfiwuyfviygvcikygviygv}
There is a universal constant $C_{\text{\tiny\ref{auevfiwuyfviygvcikygviygv}}}>0$ with the following property.
Let $\rho_0>0$ be a parameter.
Let $y\in\C^m$ be a complex vector, and let $X=(X_1,\dots,X_m)$
be a random vector with $m$ mutually independent
centered components of finite absolute second moments satisfying one of the following:
\begin{itemize}
\item Either all components of $X$ are real and the distribution
densities of normalized variables $\frac{X_i}{\sqrt{\Exp\,|X_i|^2}}$, $i\leq m$, are
uniformly bounded above by $\rho_0$,
\item Or all components of $X$ are complex, with independent
real and imaginary parts, and the distribution densities of
$\frac{\Re X_i}{\sqrt{\Exp\,|X_i|^2}}$ and $\frac{\Im X_i}{\sqrt{\Exp\,|X_i|^2}}$, $i\leq m$,
are uniformly bounded above by $\rho_0$.
\end{itemize}
Then for any complex number $s\in \C$ we have
$$
\Prob\bigg\{\bigg|\frac{\langle y,X\rangle}{\sqrt{\sum_{i=1}^m |y_i|^2\,\Exp\,|X_i|^2}}-s\bigg|
\leq t\bigg\}\leq C_{\text{\tiny\ref{auevfiwuyfviygvcikygviygv}}}\,\rho_0\,t,\quad t>0.
$$
\end{lemma}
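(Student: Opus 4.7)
The plan is to reduce the anti-concentration estimate to a standard one-dimensional estimate via normalization and rotation, and then invoke a classical density-type bound for linear combinations of independent real random variables.

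First I would normalize. Let $\sigma_i := \sqrt{\Exp|X_i|^2}$, $\tilde X_i := X_i/\sigma_i$, and $a_i := y_i \sigma_i$. Then $\langle y, X\rangle = \sum_i a_i \tilde X_i$, the normalizing denominator equals $\|a\|_2$, each $\tilde X_i$ has unit second moment, and the density hypotheses carry over: in Case~1 the density of $\tilde X_i$ is bounded by $\rho_0$, while in Case~2 one writes $\tilde X_i = U_i + i V_i$ with $U_i, V_i$ independent centered real, each of density $\leq \rho_0$ and with $\Exp U_i^2 + \Exp V_i^2 = 1$. By homogeneity it suffices to treat $\|a\|_2 = 1$ and prove $\Prob\{|W - s| \leq t\} \leq C \rho_0\, t$, where $W := \sum_i a_i \tilde X_i$.

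Second, I would reduce to one real dimension. The event $|W - s| \leq t$ is invariant under the substitution $(a, s) \mapsto (e^{i\theta} a, e^{i\theta} s)$ for any $\theta \in \R$. After this substitution, $\Re W$ is a real linear combination of independent real random variables---the $\tilde X_i$ themselves in Case~1, or the $U_i, V_i$ in Case~2. A direct computation gives $\Exp |\Re W|^2 + \Exp |\Im W|^2 = \|a\|_2^2 = 1$, so by choosing $\theta$ appropriately one may assume $\Exp |\Re W|^2 \geq 1/2$. Using $\Prob\{|W - s| \leq t\} \leq \Prob\{|\Re W - \Re s| \leq t\}$, the problem reduces to bounding $\sup_u \Prob\{|V - u| \leq t\}$ for a real random variable $V = \sum_k d_k \eta_k$, a real linear combination of at most $2m$ independent centered real $\eta_k$ with densities $\leq \rho_0$ and $\sum_k d_k^2 \Exp \eta_k^2 \geq 1/2$.

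Third, the 1D estimate. After rescaling each $\eta_k$ to unit variance (which preserves the density bound, since the variances are $\leq 1$), one lands in the classical setting of a sum $\sum_k \tilde d_k \zeta_k$ with $\zeta_k$ independent centered of unit variance and density $\leq \rho_0$, and $\|\tilde d\|_2 \geq 1/\sqrt{2}$. The target bound $C\rho_0\, t$ on the concentration function follows from Esseen's smoothing lemma, which reduces matters to controlling $\int_{|\xi| \leq 1/t} |\phi(\xi)|\, d\xi$ with $\phi(\xi) = \prod_k \phi_{\zeta_k}(\tilde d_k \xi)$, together with a pointwise bound of the form $|\phi_{\zeta_k}(\xi)|^2 \leq \exp(-c \min(\xi^2, \rho_0^{-2}))$ for each $k$. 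This last estimate is where the variance constraint (controlling the small-$|\xi|$ behavior of $\phi_{\zeta_k}$) and the density constraint (controlling its tail via Plancherel, $\int |\phi_{\zeta_k}|^2 \leq 2\pi \rho_0$) interact, and is the main technical obstacle; once it is in place, the product over $k$ converts the integrand into a Gaussian-type function whose integral over $\R$ is bounded by $C\rho_0$, giving the desired estimate.
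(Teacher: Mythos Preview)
Your reduction is essentially the same as the paper's: normalize, rotate so that the real part of the normalized sum carries at least half the weight, project to the real part, and invoke a one-dimensional anti-concentration bound for sums of independent real variables with bounded densities. The only notable differences are cosmetic. In the complex case the paper conditions on the imaginary parts of the $X_i$ and works with $\sum_i \Re v_i \cdot \tfrac{\Re X_i}{\sqrt{\Exp|X_i|^2}}$, whereas you keep all $2m$ real components $U_i,V_i$ in play; both routes land on the same one-dimensional problem. More substantively, the paper does not attempt to prove the one-dimensional density bound but simply cites it (Rogozin~1987; Rudelson--Vershynin~2015, Theorem~1.2: if independent real variables have densities bounded by $\rho_0$, then any unit-norm linear combination has density $\leq C\rho_0$). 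Your Esseen-based sketch is the right spirit, but the pointwise estimate $|\phi_{\zeta_k}(\xi)|^2\leq\exp(-c\min(\xi^2,\rho_0^{-2}))$ is not quite correct as stated for arbitrary unit-variance $\zeta_k$ with density $\leq\rho_0$ --- the tail control you get from Plancherel is $L^2$, not pointwise --- so you would need a more careful argument (e.g.\ Ball's inequality, as in the cited references) to close this. Since the one-dimensional result is standard, it is cleaner to cite it directly as the paper does.
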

\begin{proof}
Note that non-random vector
$$
v=\bigg(\frac{y_i\cdot \sqrt{\Exp\,|X_i|^2}}{\sqrt{\sum_{i=1}^m |y_i|^2\,\Exp\,|X_i|^2}}\bigg)_{i=1}^m
$$
has unit Euclidean norm.
Without loss of generality (by multiplying $y$ by ${\bf i}$ if necessary),
we can assume that $\|\Re v\|_2\geq 1/2$.
We write
$$
\frac{\langle y,X\rangle}{\sqrt{\sum_{i=1}^m |y_i|^2\,\Exp\,|X_i|^2}}
=\sum_{i=1}^m \frac{X_i}{\sqrt{\Exp\,|X_i|^2}}\cdot
\frac{y_i\cdot \sqrt{\Exp\,|X_i|^2}}{\sqrt{\sum_{i=1}^m |y_i|^2\,\Exp\,|X_i|^2}}.
$$
Fix any realization of imaginary parts of $X_i$'s (if any). Then we can write
$$
\frac{\Re\langle y,X\rangle}{\sqrt{\sum_{i=1}^m |y_i|^2\,\Exp\,|X_i|^2}}
=\sum_{i=1}^m \frac{\Re X_i}{\sqrt{\Exp\,|X_i|^2}}\cdot \Re v_i+\xi,
$$
where $\xi$ is some non-random number.
Applying well known anti-concentration results for linear combinations
of random variables with continuous distributions (see \cite{Rogozin1987} as well as
\cite[Theorem~1.2]{RV15}),
we get that the distribution density of
$$
\sum_{i=1}^m \frac{\Re X_i}{\sqrt{\Exp\,|X_i|^2}}\cdot \Re v_i
$$
is bounded above by a constant multiple of $\rho_0$.
The result follows.
\end{proof}

\subsection{A block-matrix Gershgorin--type estimate}\label{tkhjniuefuytvcutv}

The standard proof of Gershgorin's circle theorem 
can be easily adapted to estimate the smallest singular value
of diagonally dominant matrices. In this subsection, we consider a result
in the same spirit, for block matrices 
satisfying a kind of ``upper triangular domination''.
Statements of that type may be known but we are not aware of a reference.

\begin{prop}[A Gershgorin--type estimate for block matrices]\label{aljhfbiuyebfiufybeffasf}
Let $k\geq 1$ and assume that parameters $R\geq 1$ and $\varepsilon\in(0,1]$
satisfy $(4R)^{k}\varepsilon k\leq 1/2$.
Let $T$ be a block matrix of the form
$$
T=
\begin{pmatrix}
T_{1,1} & T_{1,2} & \dots & T_{1,k} \\
T_{2,1} & T_{2,2} & \dots & T_{2,k} \\
\dots & \dots & \dots & \dots \\
T_{k,1} & T_{k,2} & \dots & T_{k,k}
\end{pmatrix},
$$
where the diagonal blocks $T_{i,i}$, $i\leq k$, are square identity matrices,
$\|T_{i,j}\|\leq R$ for all $j>i$ and $\|T_{i,j}\|\leq \varepsilon$ for all $j<i$.
Then $s_{\min}(T)\geq \varepsilon k$.
\end{prop}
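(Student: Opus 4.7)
The plan is to split $T$ into its upper block-triangular part $T_U$ (including the identity diagonal blocks) and its strictly lower block-triangular part $T_L$, and then exploit the factorization $T=T_U(I+T_U^{-1}T_L)$, which gives
\[
s_{\min}(T)\,\geq\, s_{\min}(T_U)\cdot s_{\min}\!\big(I+T_U^{-1}T_L\big)
\,\geq\,\frac{1-\|T_U^{-1}\|\,\|T_L\|}{\|T_U^{-1}\|}.
\]
So the task reduces to upper-bounding $\|T_L\|$ and $\|T_U^{-1}\|$.

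A useful first step is the general inequality $\|M\|\leq \|C\|$, valid for any block matrix $M=(M_{ij})$, where $C=(c_{ij})$ is the $k\times k$ real matrix of block operator norms $c_{ij}:=\|M_{ij}\|$; this follows by applying the triangle inequality block-row by block-row and then comparing $\|Mx\|$ with $\|C\tilde{x}\|$ for $\tilde{x}_i:=\|x^{(i)}\|$. Combined with the standard estimate $\|C\|\leq\sqrt{\|C\|_1\|C\|_\infty}$, this immediately yields $\|T_L\|\leq(k-1)\varepsilon\leq k\varepsilon$, since here $C$ is a strictly lower-triangular all-$\varepsilon$ matrix.

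For $\|T_U^{-1}\|$, I would write $T_U=I+N$ with $N$ strictly upper block-triangular and $\|N_{ij}\|\leq R$. Because $N^k=0$, the Neumann series terminates: $T_U^{-1}=\sum_{m=0}^{k-1}(-N)^m$. Counting the increasing chains $i<i_1<\dots<i_{m-1}<j$ in the product expansion of $N^m$ gives $\|(N^m)_{ij}\|\leq R^m\binom{j-i-1}{m-1}$, and applying the $\sqrt{\|\cdot\|_1\|\cdot\|_\infty}$ bound to this new block-norm matrix, together with the hockey-stick identity, produces $\|N^m\|\leq R^m\binom{k-1}{m}$. The binomial theorem then yields
\[
\|T_U^{-1}\|\,\leq\,\sum_{m=0}^{k-1}R^m\binom{k-1}{m}\,=\,(1+R)^{k-1}\,\leq\,(2R)^{k-1}.
\]

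Combining the two estimates, $\|T_U^{-1}\|\,\|T_L\|\leq(2R)^{k-1}k\varepsilon=(4R)^{k}k\varepsilon/(2R\cdot 2^{k})$, which under the hypothesis $(4R)^k\varepsilon k\leq 1/2$ is at most $1/(4R\cdot 2^{k+1})\leq 1/8$. Hence $s_{\min}(I+T_U^{-1}T_L)\geq 7/8$, and consequently $s_{\min}(T)\geq 7/(8(2R)^{k-1})$; a short arithmetic check, again using $(4R)^k \varepsilon k \leq 1/2$, confirms that this lower bound exceeds $\varepsilon k$. The step I expect to be the main obstacle is precisely the bound on $\|T_U^{-1}\|$: the crude estimate $\|N^m\|\leq\|N\|^m$ would only produce $\|T_U^{-1}\|$ of order $(kR)^{k-1}$ and miss the hypothesis by an unacceptable factor; it is the combinatorial refinement $\binom{k-1}{m}$, extracted from the block-triangular structure, that makes the binomial telescoping close.
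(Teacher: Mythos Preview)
Your argument is correct, and it takes a genuinely different route from the paper's own proof. The paper argues by contradiction: assuming a unit vector $x=\oplus_i x_i$ with $\|Tx\|_2\leq\varepsilon k$, it runs a backward induction (starting from the bottom block row) to show $\|x_i\|_2\leq(4R)^{k-i+1}\varepsilon k$, and then observes that these bounds sum to less than $1$, contradicting $\|x\|_2=1$. This is essentially a block back-substitution estimate and never explicitly inverts anything.

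Your approach instead factorizes $T=T_U(I+T_U^{-1}T_L)$ and controls $\|T_U^{-1}\|$ via the terminating Neumann series, using the combinatorial refinement $\|(N^m)_{ij}\|\leq R^m\binom{j-i-1}{m-1}$ together with the hockey-stick identity to land on $(1+R)^{k-1}$. This is a nice structural argument; in particular it makes transparent that the bound on $s_{\min}(T)$ is really of order $(2R)^{-(k-1)}$, independent of $\varepsilon$, with the stated conclusion $s_{\min}(T)\geq\varepsilon k$ following only because the hypothesis forces $\varepsilon k$ to be at most that size. The paper's proof has the same feature but it is less visible there. A minor arithmetic slip: after invoking the hypothesis you get $\|T_U^{-1}\|\,\|T_L\|\leq 1/(4R\cdot 2^{k})$ rather than $1/(4R\cdot 2^{k+1})$, but this is still $\leq 1/8$ for $k\geq 1$, $R\geq 1$, so nothing is affected. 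The paper's back-substitution is a bit shorter and avoids the combinatorics, while your factorization is more modular and yields the slightly sharper explicit constant $(1+R)^{k-1}$ in place of roughly $(4R)^k$.
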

\begin{proof}
We will prove the statement by contradiction. Assume that there is a unit
vector $x$ such that $\|Tx\|_2\leq \varepsilon k$. We represent $x$ as a concatenation
$$
x=\oplus_{i\leq k}\;x_i,
$$
where for each $i\leq k$ the vector $x_i$ and the matrix $T_{i,i}$ have compatible dimensions.
Next, we show by induction that the components $x_i$ satisfy
\begin{equation}\label{auyvfiytfvfvcutwrcwutq}
\|x_i\|_2\leq (4R)^{k-i+1}\varepsilon k,\quad i\leq k,
\end{equation}
which will lead to contradiction since, in view of our assumptions on parameters,
$$
\sum_{i=1}^k (4R)^{k-i+1}\varepsilon k<1.
$$
For $i=k$, we have from the assumptions on $\|Tx\|_2$:
$$
\|x_k\|_2=\|T_{k,k}x_k\|_2\leq\Big\|\sum_{j=1}^{k-1}T_{k,j}x_j\Big\|_2+\varepsilon k
\leq 2\varepsilon k.
$$
At $\ell$--th step, $\ell>1$, we
assume that the inequalities in \eqref{auyvfiytfvfvcutwrcwutq} are true for $i=k-\ell+2,\dots,k$.
We then have
\begin{align*}
\|x_{k-\ell+1}\|_2&=\|T_{k-\ell+1,k-\ell+1}\,x_{k-\ell+1}\|_2\\
&\leq\Big\|\sum_{j< k-\ell+1}T_{k-\ell+1,j}\,x_j\Big\|_2+
\Big\|\sum_{j> k-\ell+1}T_{k-\ell+1,j}\,x_j\Big\|_2
+\varepsilon k\\
&\leq 2\varepsilon k+\sum_{j> k-\ell+1}R\cdot (4R)^{k-j+1}\varepsilon k\\
&\leq
2\varepsilon k+\frac{1}{3}(4R)^{\ell}\varepsilon k
\leq (4R)^{\ell}\varepsilon k,
\end{align*}
and \eqref{auyvfiytfvfvcutwrcwutq} follows.
\end{proof}

\begin{cor}\label{iuygfqwcyvqtrqvieyrc}
Let $k\geq 1$, $\varepsilon\in(0,1]$ and $R\geq 1$ satisfy
$2(8R)^{k}\varepsilon k\leq 1/2$.
Let $m\geq 1$ and let $X$ be
an $m\times m$ block matrix
with $k^2$ blocks having the form
$$
X
=\begin{pmatrix}
X_{1,1} & X_{1,2} & \dots & X_{1,k} \\
X_{2,1} & X_{2,2} & \dots & X_{2,k} \\
\dots & \dots & \dots & \dots \\
X_{k,1} & X_{k,2} & \dots & X_{k,k}
\end{pmatrix}.
$$
Let a non-zero complex number $z$ satisfy the following conditions:
\begin{itemize}
\item For all $1\leq j\leq i\leq k$, $\|X_{i,j}\|\leq\varepsilon |z|$;
\item For all $1\leq i<j\leq k$, $\|X_{i,j}\|\leq R |z|$.
\end{itemize}
Then
$$
s_{\min}(X-z\,\Id)\geq \varepsilon\,k\,|z|.
$$
\end{cor}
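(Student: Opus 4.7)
The plan is to reduce the corollary to Proposition \ref{aljhfbiuyebfiufybeffasf} by a rescaling, handling an auxiliary block-diagonal perturbation that arises because the diagonal blocks of $X$ are not zero.

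First I would normalize the shift by setting $T:=\Id-z^{-1}X$, so that $X-z\,\Id=-z\,T$ and $s_{\min}(X-z\,\Id)=|z|\,s_{\min}(T)$. The goal then becomes $s_{\min}(T)\geq \varepsilon k$. The off-diagonal blocks of $T$ already satisfy the proposition's hypotheses: $\|T_{i,j}\|=|z|^{-1}\|X_{i,j}\|\leq \varepsilon$ for $j<i$ and $\|T_{i,j}\|\leq R$ for $j>i$. The one obstruction is that $T_{i,i}=\Id-z^{-1}X_{i,i}$ is not the identity; only $\|T_{i,i}-\Id\|\leq \varepsilon$ holds.

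Next I would isolate this obstruction. Write $T=S+D$, where $S$ is obtained from $T$ by replacing each diagonal block $T_{i,i}$ with the identity matrix of the same size, and $D$ is the block-diagonal correction with $D_{i,i}=-z^{-1}X_{i,i}$ (so $\|D\|\leq \varepsilon$). The matrix $S$ now fits the proposition literally, and to leave room for the perturbation $D$, I would apply Proposition \ref{aljhfbiuyebfiufybeffasf} to $S$ with the strictly lower-triangular bound relaxed from $\varepsilon$ to $2\varepsilon$ (the actual bound $\varepsilon$ of course still holds). The required hypothesis becomes $(4R)^k(2\varepsilon)k\leq 1/2$, i.e.\ $2(4R)^k\varepsilon k\leq 1/2$, which is weaker than the corollary's standing assumption $2(8R)^k\varepsilon k\leq 1/2$. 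Thus the proposition yields $s_{\min}(S)\geq 2\varepsilon k$.

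Finally I would combine with the elementary bound $s_{\min}(T)\geq s_{\min}(S)-\|D\|$, giving $s_{\min}(T)\geq 2\varepsilon k-\varepsilon\geq \varepsilon k$ (using $k\geq 1$), and multiply by $|z|$. I do not expect a serious obstacle; the whole argument is a short linear-algebraic reduction. The only delicate point is the absorption of the diagonal perturbation $D$, which is precisely what forces the corollary to require $(8R)^k$ in place of the $(4R)^k$ appearing in the proposition.
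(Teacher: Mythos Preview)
Your argument is correct, but it proceeds differently from the paper's proof. The paper uses a \emph{multiplicative} reduction: it factors $X-z\,\Id=\tilde X\cdot M$ where $\tilde X$ is the block-diagonal matrix with blocks $X_{i,i}-z\,\Id$ (so that $s_{\min}(\tilde X)\geq |z|/2$) and $M=\tilde X^{-1}(X-z\,\Id)$ has exact identities on the diagonal and off-diagonal blocks bounded by $2\varepsilon$ and $2R$ respectively; then Proposition~\ref{aljhfbiuyebfiufybeffasf} is applied to $M$ with parameters $2R$, $2\varepsilon$, which is exactly where the $(8R)^k$ in the corollary's hypothesis comes from. You instead use an \emph{additive} reduction $T=S+D$ and the Weyl-type inequality $s_{\min}(T)\geq s_{\min}(S)-\|D\|$. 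Your route is slightly more elementary (no inversion of $\tilde X$ is needed) and in fact only requires the weaker condition $2(4R)^k\varepsilon k\leq 1/2$; your closing remark that the diagonal perturbation ``forces'' the $(8R)^k$ is therefore not quite accurate for your own argument---that factor arises from the paper's multiplicative preconditioning, not from your additive one.
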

\begin{proof}
First, observe that the block diagonal matrix
$$
\tilde X:=
\begin{pmatrix}
X_{1,1}-z\,\Id & 0 & \dots & 0 \\
0 & X_{2,2}-z\,\Id & \dots & 0 \\
\dots & \dots & \dots & \dots \\
0 & 0 & \dots & X_{k,k}-z\,\Id
\end{pmatrix}
$$
is invertible, with
$$
s_{\min}(\tilde X)\geq |z|-\varepsilon\,|z|\geq |z|/2.
$$
Next, we estimate the smallest singular value of the matrix ${\tilde X}^{-1}\,(X-z\,\Id)$.
Note that
$$
{\tilde X}^{-1}\,(X-z\,\Id)
=
\begin{pmatrix}
\Id & (X_{1,1}-z\,\Id)^{-1}X_{1,2} & \dots & (X_{1,1}-z\,\Id)^{-1}X_{1,k} \\
(X_{2,2}-z\,\Id)^{-1} X_{2,1} & \Id & \dots & (X_{2,2}-z\,\Id)^{-1}X_{2,k} \\
\dots & \dots & \dots & \dots \\
(X_{k,k}-z\,\Id)^{-1} X_{k,1} & (X_{k,k}-z\,\Id)^{-1} X_{k,2} & \dots & \Id
\end{pmatrix},
$$
where for all $1\leq i\leq j\leq k$,
$$
\big\|(X_{i,i}-z\,\Id)^{-1} X_{i,j}\big\|\leq \frac{2}{|z|}\cdot \varepsilon\,|z|=2\varepsilon,
$$
and for all $1\leq j<i\leq k$,
$$
\big\|(X_{i,i}-z\,\Id)^{-1} X_{i,j}\big\|\leq \frac{2}{|z|}\cdot R\,|z|=2R.
$$
Applying Proposition~\ref{aljhfbiuyebfiufybeffasf} with parameters $2R$ and $2\varepsilon$, we get
$$
s_{\min}\big({\tilde X}^{-1}\,(X-z\,\Id)\big)\geq 2\varepsilon\,k,
$$
whence
$s_{\min}(X-z\,\Id)\geq \varepsilon\,k\,|z|$.
\end{proof}

\subsection{A uniform upper bound on norms of submatrices}\label{aagvcuytwevutwcvrg}

Given a square random matrix with mutually independent centered subgaussian
entries, our goal is to obtain uniform bounds on spectral
norms of its submatrices.
\begin{prop}\label{itfufytcuqtrcutqwxi}
For every $K\geq 1$ there is $C_{\text{\tiny\ref{itfufytcuqtrcutqwxi}}}>0$
depending on $K$ with the following property.
Let $n\geq 2$ and let $A$ be an $n\times n$ random matrix
such that for every entry $A_{i,j}$ with $\Prob\{A_{i,j}\neq 0\}>0$,
the normalized variable $A_{i,j}/\sqrt{\Exp\,|A_{i,j}|^2}$ is $K$--subgaussian.
Define an event
\begin{align*}
\Event_{\text{\tiny\ref{itfufytcuqtrcutqwxi}}}:=
\Big\{
&\mbox{For every choice of non-empty subsets $I,J\subset[n]$,}\\
%&\Exp\,\big\|\row_i(A_{I,J})\big\|_2^2\leq a^2,\; i\in I,\;\;
%\Exp\,\big\|\col_j(A_{I,J})\big\|_2^2\leq a^2,\; j\in J,\\
&\|A_{I,J}\|\leq C_{\text{\tiny\ref{itfufytcuqtrcutqwxi}}}
\,\max\Big(\max\limits_{i\in I}\sqrt{\Exp\,\big\|\row_i(A_{I,J})\big\|_2^2},
\max\limits_{j\in J}\sqrt{\Exp\,\big\|\col_j(A_{I,J})\big\|_2^2}\Big)
\,\log^{5/2} n
\Big\}.
\end{align*}
Then $\Prob(\Event_{\text{\tiny\ref{itfufytcuqtrcutqwxi}}})\geq 1-n^{-2}$.
\end{prop}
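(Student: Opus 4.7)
The plan is to combine an $\varepsilon$-net bound for each fixed pair $(I,J)$ with a union bound over pairs. Write $\sigma_{I,J}$ for the maximum appearing on the right-hand side of the inequality in the event $\Event_{\text{\tiny\ref{itfufytcuqtrcutqwxi}}}$ (without the $\log^{5/2} n$ factor). For unit vectors $u \in \C^I$, $v \in \C^J$, the bilinear form $\langle u, A_{I,J}\, v\rangle = \sum_{i\in I,\, j\in J} u_i v_j A_{i,j}$ is a linear combination of independent centered subgaussians; by Theorem~\ref{akjfgifyutviywvgcywv} it is subgaussian of norm at most $C K \bigl(\sum_{i,j} |u_i|^2 |v_j|^2 \Exp|A_{i,j}|^2\bigr)^{1/2}$. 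Using $\|v\|_\infty \leq \|v\|_2 = 1$, each inner sum $\sum_j |v_j|^2 \Exp|A_{i,j}|^2$ is at most $\Exp\|\row_i(A_{I,J})\|_2^2 \leq \sigma_{I,J}^2$, so after summing against $|u_i|^2$ with $\|u\|_2 = 1$ the overall bound is $C K \sigma_{I,J}$.

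Next, let $\mathcal{N}_I, \mathcal{N}_J$ be $1/4$-nets on the unit spheres of $\C^I, \C^J$, of cardinalities bounded by $9^{2|I|}$ and $9^{2|J|}$. The standard approximation $\|A_{I,J}\| \leq 2\max_{(u,v)\in \mathcal{N}_I \times \mathcal{N}_J} |\langle u, A_{I,J}\, v\rangle|$, together with the per-pair subgaussian tail and a union bound over the nets, gives
$$
\Prob\{\|A_{I,J}\| \geq C_K \sigma_{I,J}\,t\} \leq 2 \cdot 9^{2(|I|+|J|)} e^{-c t^2/K^2}.
$$
Stratifying by $(k,\ell) = (|I|,|J|)$ and using $\binom{n}{k}\binom{n}{\ell} \leq \exp((k+\ell)\log(en))$, a union bound over all $(I,J)$ with $t$ of order $\sqrt{(|I|+|J|)\log n}$ produces a uniform bound $\|A_{I,J}\| \lesssim_K \sigma_{I,J}\sqrt{(|I|+|J|)\log n}$ on an event of probability $\geq 1 - n^{-2}$.

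The main obstacle is that this estimate degrades to $\sigma_{I,J}\sqrt{n\log n}$ for macroscopic submatrices, which is much worse than the target $\sigma_{I,J}\log^{5/2} n$. To obtain the claimed polylogarithmic factor one must, for large $(I,J)$, replace the crude net estimate by a sharper moment-method bound of Bandeira--van Handel type, giving $\Exp\|A_{I,J}\| \lesssim_K \sigma_{I,J}\sqrt{\log n}$, and augment it with a Talagrand-type concentration of $\|A_{I,J}\|$ around its mean at Lipschitz scale $\sigma^*$. One then stratifies dyadically over the values of $\sigma_{I,J}$: pairs with small $\sigma_{I,J}$ force small row and column variance sums, so such pairs form a combinatorially thin family and the union bound is cheap there, while pairs with $\sigma_{I,J} \asymp \sigma$ enjoy the full strength of the moment bound and the strong concentration. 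Balancing these two regimes (the net argument for small submatrices, and the moment+concentration argument for large ones) is the technical crux and is what ultimately produces the factor $\log^{5/2} n$.
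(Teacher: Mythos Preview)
Your diagnosis is accurate: the naive net bound
$\|A_{I,J}\|\lesssim_K \sigma_{I,J}\sqrt{(|I|+|J|)\log n}$
is correct and is indeed the starting point, but it is far too weak for macroscopic submatrices.  The gap is in your proposed remedy.

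The claim that ``pairs with small $\sigma_{I,J}$ \dots\ form a combinatorially thin family'' is false in general, and without it the Bandeira--van~Handel plus Talagrand scheme does not close.  Consider the variance profile $V_{i,j}=1$ if $j=1$ or $i=j$, and $V_{i,j}=0$ otherwise.  For every pair $(I,J)$ with $1\notin J$ and $I\cap J\neq\emptyset$ one has $\sigma_{I,J}=1$ while $\sigma^*_{I,J}=1$; there are $\Theta(2^{2n})$ such pairs.  A Talagrand-type deviation bound at Lipschitz scale $\sigma^*$ then requires $t\gtrsim\sqrt{n}$ to survive the union bound, which swamps the target $\sigma_{I,J}\log^{5/2}n=\log^{5/2}n$.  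More generally, nothing in the hypotheses forces the level sets of $(I,J)\mapsto\sigma_{I,J}$ to be small.  A second issue is that the proposition is stated for \emph{subgaussian} entries, where neither Bandeira--van~Handel in its original form nor Talagrand concentration of the spectral norm at scale $\sigma^*$ is available off the shelf; you would have to supply a substitute.

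The paper avoids both obstacles by a different, entirely elementary route.  It never invokes moment methods or concentration of the operator norm.  Instead it first controls the $\|\cdot\|_{\infty\to 2}$ norm: your net argument, run on the $\ell_\infty$ ball of $\C^J$ rather than the sphere, already gives
\[
\|A_{I,J}\|_{\infty\to 2}\ \lesssim_K\ \sqrt{\max(|I|,|J|)\,\log n}\cdot\max_{i\in I}\sqrt{\Exp\|\row_i(A_{I,J})\|_2^2}
\]
uniformly over all $(I,J)$ on a single high-probability event.  On that event, partition the rows of a fixed $A_{I,J}$ into dyadic layers $I_k=\{i:\Exp\|\row_i(A_{I,J})\|_2^2\in(4^{-k}a^2,4^{-k+1}a^2]\}$ with $a=\sigma_{I,J}$; the Hilbert--Schmidt identity $|I_k|\,4^{-k}a^2\le \Exp\|A_{I,J}\|_{HS}^2\le |J|\,a^2$ forces $|I_k|\le 4^k|J|$, so the factor $2^{-k}$ from the row norm cancels the factor $\sqrt{|I_k|}$ from the net bound, yielding $\|A_{I_k,J}\|_{\infty\to 2}\lesssim a\sqrt{|J|\log n}$ for every layer and hence $\|A_{I,J}\|_{\infty\to 2}\lesssim a\sqrt{|J|}\,\log^{3/2}n$.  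Finally, to pass from $\|\cdot\|_{\infty\to 2}$ to the operator norm, decompose an arbitrary unit $x\in\C^J$ into dyadic pieces $x^{(k)}$ supported on $\{j:|x_j|\mbox{ is roughly the }2^{k}\mbox{-th largest}\}$; each piece has $\|x^{(k)}\|_\infty\lesssim 2^{-k/2}$ on a support of size $2^{k-1}$, so $\|A_{I,J_k}x^{(k)}\|_2\lesssim a\log^{3/2}n$, and summing the $O(\log n)$ pieces gives the factor $\log^{5/2}n$.  The whole argument is deterministic once the single $\|\cdot\|_{\infty\to 2}$ event holds, so no further union bound is needed.
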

\begin{Remark}[Related results in the literature]
See works
\cite{BvH,vH17,LvHY,BBvH,BvH22} %(and, in particular, Theorem~\ref{normmatr})
for concentration of
norms of inhomogeneous random matrices with subgaussian entries.
Whereas for fixed subsets $I,J$ the cited works provide stronger estimates on $\|A_{I,J}\|$
than the above proposition, 
just using those estimates
as a black box and taking the union bound over $I,J\subset[n]$ would result in a highly suboptimal
statement. We did not attempt to optimize the polylogarithmic multiple $\log^{5/2}n$
in the definition of event $\Event_{\text{\tiny\ref{itfufytcuqtrcutqwxi}}}$
since it does not affect the main result of the paper.
\end{Remark}

\begin{lemma}\label{aiueyfgiwyetfciavvbva}
For every $K\geq 1$ there is $C_{\text{\tiny\ref{aiueyfgiwyetfciavvbva}}}>0$
depending on $K$ with the following property.
Let $n\geq 2$ and let $A$ be an $n\times n$ random matrix
with independent centered entries,
such that for every entry $A_{i,j}$ with $\Prob\{A_{i,j}\neq 0\}>0$,
the normalized variable $A_{i,j}/\sqrt{\Exp\,|A_{i,j}|^2}$ is $K$--subgaussian.
Define the event
\begin{align*}
\Event_{\text{\tiny\ref{aiueyfgiwyetfciavvbva}}}:=
\Big\{&\mbox{For every choice of non-empty subsets $I,J\subset[n]$,}\\
&\|A_{I,J}\|_{\infty\to 2}\leq C_{\text{\tiny\ref{aiueyfgiwyetfciavvbva}}}
\,\sqrt{\max(|I|,|J|)\log n}
\cdot
\sqrt{\max\limits_{i\in I}\,\Exp\,\big\|\row_i(A_{I,J})\big\|_2^2}
\Big\}.
\end{align*}
Then $\Prob(\Event_{\text{\tiny\ref{aiueyfgiwyetfciavvbva}}})\geq 1-n^{-2}$.
\end{lemma}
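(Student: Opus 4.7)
The plan is to combine a subgaussian concentration bound on $\|A_{I,J}x\|_2$ for a single vector $x$ with a small $\epsilon$-net of the unit $\ell_\infty$-ball of $\C^J$, then to take a union bound over all pairs $(I,J)$ of prescribed sizes, exploiting that their count is only $n^{O(|I|+|J|)}$.

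First I would fix sizes $k,\ell\geq 1$ and subsets $I,J\subset[n]$ with $|I|=k$, $|J|=\ell$, and a vector $x\in\C^J$ with $\|x\|_\infty\leq 1$. The coordinates $Y_i:=\langle\row_i(A_{I,J}),x\rangle$, $i\in I$, of $A_{I,J}x$ are mutually independent centered subgaussian variables, and Theorem~\ref{akjfgifyutviywvgcywv} together with the hypothesis $\|A_{i,j}\|_{\psi_2}\leq K\sqrt{\Exp|A_{i,j}|^2}$ gives
$$
\|Y_i\|_{\psi_2}^2\leq C_{\text{\tiny\ref{akjfgifyutviywvgcywv}}}\sum_{j\in J}|x_j|^2\,\|A_{i,j}\|_{\psi_2}^2\leq C_{\text{\tiny\ref{akjfgifyutviywvgcywv}}}\,K^2\,\Exp\|\row_i(A_{I,J})\|_2^2\leq C_{\text{\tiny\ref{akjfgifyutviywvgcywv}}}\,K^2\,\sigma_{I,J}^2,
$$
where $\sigma_{I,J}:=\max_{i\in I}\sqrt{\Exp\|\row_i(A_{I,J})\|_2^2}$. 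Rescaling by $\sqrt{C_{\text{\tiny\ref{akjfgifyutviywvgcywv}}}}\,K\,\sigma_{I,J}$ and applying Theorem~\ref{aiuyfgquytfeuqytvcytwqv} yields
$$
\Prob\big\{\|A_{I,J}x\|_2\geq C_1\,K\,\sigma_{I,J}(\sqrt{k}+t)\big\}\leq 2\exp(-c_1 t^2),\qquad t\geq 0,
$$
with $C_1,c_1>0$ depending only on $K$.

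Next I would fix a $(1/2)$-net $\mathcal{N}_J$ of the $\ell_\infty$-unit ball of $\C^J$ with $|\mathcal{N}_J|\leq 5^{2\ell}$, for which the standard approximation gives $\|A_{I,J}\|_{\infty\to 2}\leq 2\max_{x\in\mathcal{N}_J}\|A_{I,J}x\|_2$. Setting $t:=C_2\sqrt{\max(k,\ell)\log n}$ for a sufficiently large constant $C_2$, and taking a union bound over the at most
$$
\binom{n}{k}\binom{n}{\ell}\cdot 5^{2\ell}\leq\exp\big(C_3(k+\ell)\log n\big)
$$
triples $(I,J,x)$ of prescribed sizes, the displayed deviation bound forces every such triple to satisfy $\|A_{I,J}x\|_2\leq C\,K\,\sigma_{I,J}\sqrt{\max(k,\ell)\log n}$ outside an event of probability at most $n^{-4}$ (the lower-order $\sqrt{k}$ term is absorbed since $\log n\geq 1$). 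Summing the failure probability over the at most $n^2$ values of $(k,\ell)$ produces the claimed $1-n^{-2}$ bound, with the net loss absorbed into $C_{\text{\tiny\ref{aiueyfgiwyetfciavvbva}}}$.

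The main obstacle is the interplay between net size, enumeration of $(I,J)$, and deviation threshold: a naive bound that first freezes $I,J$ and only afterwards unions over all $2^{2n}$ subset pairs would force $t\asymp\sqrt{n\log n}$, destroying the inequality for small submatrices. What saves the argument is that for fixed sizes $k,\ell$ the combined enumeration cost is only $\exp(O((k+\ell)\log n))$, and the quadratic-exponential decay $\exp(-ct^2)$ matches this at the scale $t\asymp\sqrt{\max(k,\ell)\log n}$, which is exactly the scale claimed by the lemma.
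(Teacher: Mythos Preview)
Your proposal is correct and follows essentially the same route as the paper's proof: fix sizes $k,\ell$ and subsets $I,J$, use Theorem~\ref{akjfgifyutviywvgcywv} and Theorem~\ref{aiuyfgquytfeuqytvcytwqv} to get a subgaussian deviation bound for $\|A_{I,J}x\|_2$ at a single $x$, pass to the $\|\cdot\|_{\infty\to 2}$--norm via a $1/2$--net of size $\hat C^{\ell}$, and then take the union bound over all $\binom{n}{k}\binom{n}{\ell}$ choices of $(I,J)$ and all $(k,\ell)$ with $t$ a large multiple of $\sqrt{\max(k,\ell)\log n}$. The only differences are cosmetic (you spell out the $\psi_2$--norm computation and the enumeration bound $\exp(C_3(k+\ell)\log n)$ more explicitly).
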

\begin{proof}
Fix for a moment any $k,\ell\in\{1,\dots,n\}$ and non-empty subsets $I,J$ of $[n]$
with $|I|=k$, $|J|=\ell$, and define
$$
a:=\sqrt{\max\limits_{i\in I}\,\Exp\,\big\|\row_i(A_{I,J})\big\|_2^2}.
$$
Let $x\in\C^J$ be any fixed vector with $|x_j|\leq 1$, $j\in J$.
By Theorem~\ref{akjfgifyutviywvgcywv}, the variables
$$
\langle \row_i(A_{I,J}),x\rangle,\quad i\in I,
$$
are mutually independent and $Ca$--subgaussian, for some $C>0$ depending on $K$.
Applying Theorem~\ref{aiuyfgquytfeuqytvcytwqv}, we obtain
$$
\Prob\big\{\big\|A_{I,J} \,x\big\|_2\geq \tilde C\,a\,\sqrt{k}+a\,t\big\}\leq 2\exp(-\tilde ct^2),\quad t>0,
$$
for some $\tilde c,\tilde C>0$ depending on $K$.
Note that for any $1/2$--net $\Net$ in the unit ball of the $\|\cdot\|_\infty$--norm in $\C^J$,
the triangle inequality for norms implies
$$
\big\|A_{I,J} \big\|_{\infty\to 2}\leq 2\max\limits_{x\in\Net}\big\|A_{I,J}\,x\big\|_2.
$$
A standard volumetric argument for estimating the size of economical nets
(see, for example, \cite[Section~4.2]{VershyninBook}) then gives
$$
\Prob\big\{\big\|A_{I,J} \big\|_{\infty\to 2}\geq 2\tilde C\,a\,\sqrt{k}+2a\,t\big\}\leq 2\hat C^{\ell}\exp(-\tilde ct^2),\quad t>0,
$$
for some universal constant $\hat C\geq 1$.
Letting $t$ be a sufficiently large multiple of $\sqrt{\max(k,\ell)\log n}$ and taking the union bound over all
${n\choose k}\cdot{n\choose \ell}$ admissible choices of $I,J$ and all $k,\ell$,
we get the result.
\end{proof}

\begin{lemma}\label{ajhgbvoinpti}
For every $K\geq 1$ there is $C_{\text{\tiny\ref{ajhgbvoinpti}}}>0$
depending on $K$ with the following property.
Let $n\geq 2$ and let $A$ be an $n\times n$ random matrix as in Lemma~\ref{aiueyfgiwyetfciavvbva}. 
%such that for every entry $A_{i,j}$ with $\Prob\{A_{i,j}\neq 0\}>0$,
%the normalized variable $A_{i,j}/\sqrt{\Exp\,|A_{i,j}|^2}$ is $K$--subgaussian.
Let $a>0$ be a parameter (possibly depending on $n$).
Then, conditioned on the event
$\Event_{\text{\tiny\ref{aiueyfgiwyetfciavvbva}}}$,
for every choice of non-empty subsets $I,J\subset[n]$ %such that
%$|J|\leq |I|$ and
%\begin{align*}
%&\Exp\,\big\|\row_i(A_{I,J})\big\|_2^2\leq a^2,\quad i\in I,\\
%&\Exp\,\big\|\col_j(A_{I,J})\big\|_2^2\leq a^2,\quad j\in J,
%\end{align*}
we have
$$
\|A_{I,J}\|_{\infty\to 2}\leq C_{\text{\tiny\ref{ajhgbvoinpti}}}
\,\max\Big(\max\limits_{i\in I}\sqrt{\Exp\,\big\|\row_i(A_{I,J})\big\|_2^2},
\max\limits_{j\in J}\sqrt{\Exp\,\big\|\col_j(A_{I,J})\big\|_2^2}\Big)
\,\sqrt{|J|}\,\log^{3/2} n.
$$
\end{lemma}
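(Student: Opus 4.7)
The plan is a dyadic decomposition of the row set $I$ by the magnitude of the expected row norm, followed by an application of Lemma~\ref{aiueyfgiwyetfciavvbva} to each piece and a Pythagorean combination. If $|I|\leq |J|$, then Lemma~\ref{aiueyfgiwyetfciavvbva} applied directly to $A_{I,J}$ already gives the desired bound (with a better power of $\log n$), so I may assume $|I|>|J|$. Write $\sigma_R:=\max_{i\in I}\sqrt{\Exp\,\|\row_i(A_{I,J})\|_2^2}$ and $\sigma_C:=\max_{j\in J}\sqrt{\Exp\,\|\col_j(A_{I,J})\|_2^2}$.

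Set $K:=\lceil 2\log_2 n\rceil$. For each $0\leq k\leq K$, let
$$I_k:=\big\{i\in I\;:\;\sigma_R^2/2^{k+1}<\Exp\,\|\row_i(A_{I,J})\|_2^2\leq \sigma_R^2/2^k\big\},$$
and let $I_\infty:=I\setminus\bigcup_{k\leq K}I_k$. The key size estimate comes from the basic symmetry between row and column second moments:
$$\sum_{i\in I}\Exp\,\|\row_i(A_{I,J})\|_2^2=\sum_{j\in J}\Exp\,\|\col_j(A_{I,J})\|_2^2\leq |J|\,\sigma_C^2,$$
which yields $|I_k|\leq 2^{k+1}|J|\,\sigma_C^2/\sigma_R^2$. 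Since the decomposition depends only on the deterministic variance profile, the event $\Event_{\text{\tiny\ref{aiueyfgiwyetfciavvbva}}}$ controls $\|A_{I_k,J}\|_{\infty\to 2}$ for every $k$ simultaneously, with no additional probabilistic input needed.

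Applying Lemma~\ref{aiueyfgiwyetfciavvbva} to each $A_{I_k,J}$ gives $\|A_{I_k,J}\|_{\infty\to 2}\leq C\sqrt{\max(|I_k|,|J|)\log n}\cdot \sigma_R/2^{k/2}$; plugging in the size bound converts this, uniformly in $k$, to $\|A_{I_k,J}\|_{\infty\to 2}\leq C'\sqrt{|J|\log n}\,\max(\sigma_R,\sigma_C)$. The remainder $A_{I_\infty,J}$ has maximal expected row norm at most $\sigma_R/n$, so its contribution is negligible. The identity $\|A_{I,J}x\|_2^2=\sum_k\|A_{I_k,J}x\|_2^2+\|A_{I_\infty,J}x\|_2^2$, valid for every $x\in\C^J$, then yields
$$\|A_{I,J}\|_{\infty\to 2}^2\leq \sum_{k=0}^K\|A_{I_k,J}\|_{\infty\to 2}^2+\|A_{I_\infty,J}\|_{\infty\to 2}^2\leq (K+2)\,(C')^2\,|J|\log n\,\max(\sigma_R,\sigma_C)^2,$$
which, with $K=O(\log n)$, gives the claimed estimate (in fact with $\log n$ in place of $\log^{3/2}n$, consistent with the author's remark that polylogarithmic factors were not optimized). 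The only real obstacle is spotting the right decomposition: the size bound on $|I_k|$ is precisely the device that converts row-only information (the content of Lemma~\ref{aiueyfgiwyetfciavvbva}) into column-based control; once this identity is noticed, the rest is bookkeeping.
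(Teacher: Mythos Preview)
Your argument is correct and follows essentially the same route as the paper: a dyadic decomposition of the row set $I$ by the magnitude of $\Exp\|\row_i(A_{I,J})\|_2^2$, with the crucial size bound $|I_k|\lesssim 2^k|J|$ coming from the Hilbert--Schmidt identity $\sum_i\Exp\|\row_i\|_2^2=\sum_j\Exp\|\col_j\|_2^2$. The one substantive difference is how the pieces are recombined: the paper uses the triangle inequality $\|A_{I,J}\|_{\infty\to 2}\leq\sum_k\|A_{I_k,J}\|_{\infty\to 2}$ together with a second estimate $\|A_{I_k,J}\|_{\infty\to 2}\lesssim 2^{-k}an$ to make the infinite sum converge, arriving at $\sqrt{|J|}\,\log^{3/2}n$; you exploit the fact that the $I_k$ partition the \emph{rows}, so $\|A_{I,J}x\|_2^2=\sum_k\|A_{I_k,J}x\|_2^2$ exactly, and then truncate at $K=O(\log n)$ levels with a separately handled remainder. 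This Pythagorean combination is the cleaner choice and explains why you land on $\sqrt{|J|}\,\log n$ rather than $\sqrt{|J|}\,\log^{3/2}n$---a sharpening the author explicitly did not pursue.
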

\begin{proof}
Condition on any realization of $A$ from $\Event_{\text{\tiny\ref{aiueyfgiwyetfciavvbva}}}$,
and let $I,J$ be subsets of $[n]$ as in the statement of the corollary.
Denote
$$
a:=\max\Big(\max\limits_{i\in I}\sqrt{\Exp\,\big\|\row_i(A_{I,J})\big\|_2^2},
\max\limits_{j\in J}\sqrt{\Exp\,\big\|\col_j(A_{I,J})\big\|_2^2}\Big).
$$
Partition the set of indices $I$ according to the magnitudes of expected square 
norms of the rows:
\begin{align*}
&I_k:=\big\{i\in I:\;\Exp\,\big\|\row_i(A_{I,J})\big\|_2^2\in (4^{-k}a^2,4^{-k+1}a^2]\big\},\quad k=1,2,\dots;\\
&I_\infty:=\big\{i\in I:\;\Exp\,\big\|\row_i(A_{I,J})\big\|_2^2=0\big\}.
\end{align*}
In view of the definition of $\Event_{\text{\tiny\ref{aiueyfgiwyetfciavvbva}}}$,
we have for every $k<\infty$ such that $I_k\neq\emptyset$:
$$
\|A_{I_k,J}\|_{\infty\to 2}\leq C_{\text{\tiny\ref{aiueyfgiwyetfciavvbva}}}\,2^{-k+1}a\,\sqrt{\max(|I_k|,|J|)\,\log n}
\leq 2C_{\text{\tiny\ref{aiueyfgiwyetfciavvbva}}}\,a\,\sqrt{|J|\,\log n},
\quad k=1,2,\dots,
$$
where we used that
$$
|I_k|\cdot 4^{-k}a^2\leq \Exp\,\|A_{I_k,J}\|_{HS}^2\leq \Exp\,\|A_{I,J}\|_{HS}^2\leq |J|\,a^2.
$$
On the other hand, for every $k<\infty$ we have
$$
\|A_{I_k,J}\|_{\infty\to 2}\leq \sqrt{n}\,\|A_{I_k,J}\|_{HS}\leq 2^{-k+1}\,a\,n.
$$
Therefore,
$$
\|A_{I,J}\|_{\infty\to 2}\leq 2C_{\text{\tiny\ref{aiueyfgiwyetfciavvbva}}}\,a\,\sum_{k=1}^{\infty}
\min\big(\sqrt{|J|\,\log n},
2^{-k}\,n\big).
$$
The result follows.
\end{proof}

\begin{proof}[Proof of Proposition~\ref{itfufytcuqtrcutqwxi}]
Condition on any realization of $A$ from $\Event_{\text{\tiny\ref{aiueyfgiwyetfciavvbva}}}$,
fix any non-empty
$I,J\subset[n]$, and define
$$
a:=\max\Big(\max\limits_{i\in I}\sqrt{\Exp\,\big\|\row_i(A_{I,J})\big\|_2^2},
\max\limits_{j\in J}\sqrt{\Exp\,\big\|\col_j(A_{I,J})\big\|_2^2}\Big).
$$
Let $x$ be any unit vector in $\C^J$.
We will show that $\|A_{I,J}\,x\|_2=O_K(a\,\log^{5/2} n)$
which will imply that $\|A_{I,J}\|=O_K(a\,\log^{5/2} n)$.
Without loss of generality, $J=\{1,\dots,|J|\}$, and $|x_1|\geq|x_2|\geq\dots\geq|x_{|J|}|$.
Define
$$
J_k:=J\cap\{2^{k-1},\dots,2^k-1\},\quad k=1,2,\dots,
$$
and note that the coordinate projection of $x$ onto $\C^{J_k}$
(which we will denote by  $x^{(k)}$) has the $\|\cdot\|_\infty$--norm
at most $2^{1/2-k/2}$ whereas $|J_k|=2^{k-1}$, $k=1,2,\dots$.
Applying Lemma~\ref{ajhgbvoinpti}, we then get
$$
\|A_{I,J_k}\,x^{(k)}\|_2\leq 2^{1/2-k/2}\|A_{I,J_k}\|_{\infty\to 2}
\leq C_{\text{\tiny\ref{ajhgbvoinpti}}}\,a\,\log^{3/2} n.
$$
The result follows by observing that
$$
\|A_{I,J}\,x\|_2\leq\sum_{k=1}^\infty \|A_{I,J_k}\,x^{(k)}\|_2,
$$
and that $x^{(k)}=0$ whenever $2^{k-1}>n$.
\end{proof}

\section{The smallest singular value of inhomogeneous matrices}\label{asytfvufytviyvbbntr}

Fix a large $n$ and parameters $K\geq 1$, $\rho_0>0$.
Consider a random matrix $A=V\odot W$, 
where $W$ is an $n\times n$ real or complex random matrix,
$V$ is a matrix with non-random non-negative elements, and ``$\odot$'' is the Hadamard (entry-wise)
product.
For the rest of the section, we define
$$
\sigma^*:=
\max\limits_{i,j\leq n}|V_{i,j}|,
$$
and
$$
\sigma:=
\max\bigg(\max\limits_{j\leq n}\sqrt{\sum_{i=1}^n V_{i,j}^2},
\max\limits_{i\leq n}\sqrt{\sum_{j=1}^n V_{i,j}^2}\bigg).
$$

We will assume that the matrix $W$ %and a non-zero complex number $z$
satisfies
\begin{Assumption}[Independence and moments]\label{thkrnhoiwgbouibuve}
The entries of $W$ are independent,
$K$--sub\-gaussian, centered, of unit absolute second moments.
\end{Assumption}
\begin{Assumption}[Density]\label{agedytewrcuaciarebv}\hspace{0cm}
\begin{itemize}
\item Either the entries of $W$ are real with distribution densities
uniformly bounded above by $\rho_0$,
\item Or the entries are complex with independent real and imaginary
parts having distribution
densities
uniformly bounded above by $\rho_0$.
\end{itemize}
\end{Assumption}

\begin{Remark}\label{iytvuyovbnotwrhoubh}
Note that, assuming $n$ is sufficiently large, everywhere on event
$\Event_{\text{\tiny\ref{itfufytcuqtrcutqwxi}}}$ we have
$$
\|A\|\leq C_{\text{\tiny\ref{itfufytcuqtrcutqwxi}}}\,\sqrt{n}\sigma^*\,\log^{5/2} n< n\sigma^*.
$$
\end{Remark}

\bigskip

The next theorem is the main result of the paper.
\begin{theorem}\label{qiuyefqiwgfqvuq}
For every $K,R\geq 1$, $\rho_0>0$ and $\kappa\in(0,1]$
there is $n_{\text{\tiny\ref{qiuyefqiwgfqvuq}}}\in\N$
depending on $K$, $R$, $\rho_0$, and $\kappa$ with the following property.
Let $n\geq n_{\text{\tiny\ref{qiuyefqiwgfqvuq}}}$,
and let the random matrix $W$ satisfy Assumptions~\ref{thkrnhoiwgbouibuve} and~\ref{agedytewrcuaciarebv}.
Assume additionally that $z$ is a non-zero complex number such that
$$
|z|\geq \max\Big(\sigma^*\,n^{2\kappa},\frac{\sigma}{R}\Big).
$$
Then with probability at least $1-2n^{-2}$ the smallest singular value of the matrix $A-z\,\Id=V\odot W-z\,\Id$
satisfies
$$
s_{\min}(A-z\,\Id)\geq |z|\,\exp\bigg(-\frac{n^{1+3\kappa}(\sigma^*)^2}{|z|^2}\bigg)
\geq |z|\,\exp\bigg(-R^2\,n^{3\kappa}\,\Big(
\frac{\sqrt{n}\sigma^*}{\sigma}\Big)^2\bigg).
$$
\end{theorem}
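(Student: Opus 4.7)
The plan is to implement the nested-submatrix strategy sketched in the introduction. The starting observation is that on the event $\mathcal{E}_{\mathrm{bad}}:=\{s_{\min}(A-z\,\Id)\leq|z|\,\delta\}$ with $\delta:=\exp(-n^{1+3\kappa}(\sigma^*)^2/|z|^2)$, there is a unit vector $x$ with $\|(A-z\,\Id)x\|_2\leq|z|\,\delta$. Since $|z|\geq\sigma^*n^{2\kappa}$ and, by Remark \ref{iytvuyovbnotwrhoubh}, $\|A\|\leq n\sigma^*\ll|z|$ on the event $\Event_{\text{\tiny\ref{itfufytcuqtrcutqwxi}}}$, the columns of $A-z\,\Id$ are dominated by the $-z\,\Id$ part; consequently at least one column, say $\col_{j_0}(A-z\,\Id)$, is at distance $O(|z|\,\delta)$ from the span of the others, and $|x_{j_0}|$ is not too small. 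I would then remove row and column $j_0$, pass to the principal submatrix indexed by $J_1:=[n]\setminus\{j_0\}$, and iterate on $A_{J_1}-z\,\Id$ to extract $j_1$, and so on, producing a chain $[n]=J_0\supset J_1\supset\cdots\supset J_d$ with corresponding indices $j_0,\dots,j_{d-1}$.

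Second, I would organize this extraction through the directed graph alluded to in the introduction, whose vertices are principal submatrices $A_J-z\,\Id$ and whose edges encode allowed deletion moves. Admissibility of deleting $j$ from $J$ should be dictated by a dyadic range for $\sum_{i\in J}V_{i,j}^2$, so that the anti-concentration bound of Lemma \ref{auevfiwuyfviygvcikygviygv} applied at step $\ell$ yields a controllable density factor. A vertex $J$ is \emph{terminal} if either $J=\emptyset$ or every column (and row) of $V_{J,J}$ has Hilbert--Schmidt mass within a prescribed range, so that no admissible deletion remains; call the latter \emph{non-empty terminals}. The combinatorial cost of summing over admissible paths is controlled by the structure of $V$ rather than by the crude $n!$-type count.

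Third, I would bound $\Prob(\mathcal{E}_{\mathrm{bad}})$ path by path. For a fixed admissible path, conditioning on everything outside $\col_{j_\ell}(A_{J_\ell})$, the distance of that column to the span of the rest is $|\langle y^{(\ell)},X^{(\ell)}\rangle-s^{(\ell)}|$ where $y^{(\ell)}$ is a unit normal to $E_\ell$, $X^{(\ell)}$ is the independent Gaussian/subgaussian column vector, and $s^{(\ell)}\in\C$ is determined by $z$ and the conditioning. Lemma \ref{auevfiwuyfviygvcikygviygv} gives a single-step probability of order $\rho_0|z|\delta/\bigl(\sum_{i\in J_\ell}V_{i,j_\ell}^2\bigr)^{1/2}$, and telescopic conditioning multiplies these over $\ell=0,\dots,d-1$. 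A union bound over all admissible paths and a tuning of $d$ against the sparsity parameter $(\sqrt{n}\sigma^*/\sigma)^2$ must absorb the combinatorial count, and the chosen $\log\delta^{-1}\asymp n^{1+3\kappa}(\sigma^*/|z|)^2$ is calibrated so that this product beats $n^{-2}$.

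The main obstacle is the uniform invertibility of the non-empty terminals: for every terminal set $T$ reached by the procedure we must lower bound $s_{\min}(A_T-z\,\Id)$ deterministically on the high-probability event $\Event_{\text{\tiny\ref{itfufytcuqtrcutqwxi}}}$, since the nested chain may stop short of exhausting $[n]$. The plan here is to exploit the terminal condition: every row and column of $V_{T,T}$ has Hilbert--Schmidt mass at most $\varepsilon|z|$, which via Proposition \ref{itfufytcuqtrcutqwxi} gives a norm bound of order $\varepsilon|z|\log^{5/2}n$ on submatrices. A careful greedy block decomposition $T=T_1\sqcup\cdots\sqcup T_k$ (the goal of Subsection \ref{aefiwtvcytarcytefufwa}) must be constructed so that the below-diagonal blocks $A_{T_i,T_j}$ ($j<i$) have norm $\leq\varepsilon|z|$ while the above-diagonal blocks have norm $\leq R|z|$, with $k$ small enough that $2(8R)^k\varepsilon k\leq 1/2$; Corollary \ref{iuygfqwcyvqtrqvieyrc} then yields $s_{\min}(A_T-z\,\Id)\geq\varepsilon k|z|$, easily swallowing the $|z|\delta$ threshold. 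I expect building such a block decomposition uniformly over all terminals, while keeping $k$ only polylogarithmic in the sparsity, to be the hardest part of the argument, since it has to interact consistently with the admissibility rule that defined the terminals in the first place.
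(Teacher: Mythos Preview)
The overall architecture of your proposal matches the paper (a graph on principal submatrices, anti-concentration via Lemma~\ref{auevfiwuyfviygvcikygviygv} with telescopic conditioning, and a Gershgorin-type bound for terminals via Corollary~\ref{iuygfqwcyvqtrqvieyrc}), but there is a genuine gap in the deletion mechanism that, as written, makes the argument fail.

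Your step $J\to J\setminus\{j\}$ removes a single index at a time. This breaks two things. First, path lengths are then of order $n$, and the number of paths of that length is at least $n^{\Theta(n)}$; no dyadic constraint on column mass can absorb that, so the union bound is hopeless. Second, and more fundamentally, single-index removal provides no mechanism to \emph{transfer} the small-singular-value condition from $A_J-z\,\Id$ to the smaller matrix while simultaneously extracting an anti-concentration factor. The paper's Lemma~\ref{akufvawiyfvufcuytvu} runs a dichotomy: either the normalized inner product
\[
\frac{\langle\normal_{J,j},\col_j(A_J-z\,\Id)\rangle}{\sqrt{\sum_{k\in J}|(\normal_{J,j})_k|^2\,V_{k,j}^2}}
\]
is small (and anti-concentration bites), or the denominator is small, which forces $\normal_{J,j}$ to have small mass on the support set $U_j=\{k\in J:\tilde V_{k,j}\neq 0\}$; then the projection of $\normal_{J,j}$ onto $\C^{J\setminus(U_j\cup\{j\})}$ is nearly unit and witnesses smallness of $s_{\min}(A_{J\setminus(U_j\cup\{j\})}-z\,\Id)$. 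The paper therefore removes the \emph{entire block} $U_j\cup\{j\}$, not just $j$. Since a ``dense'' column (one with $\sum_i V_{i,j}^2>L^2=n^{-2\kappa}|z|^2$) satisfies $|U_j|>L^2/(2(\sigma^*)^2)$, every non-terminal edge shrinks $J$ by at least that amount, bounding path length by $\lceil 2n(\sigma^*)^2/L^2\rceil$ and making the union bound in Lemma~\ref{akgvutrcytqrcwxtqwevuyc} tractable.

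Two smaller points. Your claim $\|A\|\leq n\sigma^*\ll|z|$ is incorrect: the hypothesis is only $|z|\geq n^{2\kappa}\sigma^*$, so for $\kappa<1/2$ the norm $\|A\|$ can be far larger than $|z|$; the paper only ever uses $\|A-z\,\Id\|\leq(n+1)|z|$. And for the terminals, the paper's condition is one-sided (columns only): $\sum_{i\in J}V_{i,j}^2\leq L^2$ for all $j\in J$. Lemma~\ref{auyfiytwvfuyvwfuywvycv} then exploits the resulting Hilbert--Schmidt bound to iteratively peel off rows of large mass, producing a block decomposition with $k\leq 1/\kappa+1$ --- a constant depending only on $\kappa$, not polylogarithmic --- so that the hypothesis $2(8R)^k\varepsilon k\leq 1/2$ of Corollary~\ref{iuygfqwcyvqtrqvieyrc} is easy to verify; your expected polylogarithmic $k$ would make that hypothesis fail.
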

\begin{Remark}
The probability bound $1-2n^{-2}$ can be replaced with $1-n^{-C}$ for arbitrary
universal constant $C>0$ without any changes in proof.
\end{Remark}
\begin{Remark}
We leave it as an open problem to verify the conclusion of Theorem~\ref{qiuyefqiwgfqvuq}
without Assumption~\ref{agedytewrcuaciarebv} on $W$.
\end{Remark}

We expect that the estimate on $s_{\min}(A-z\,\Id)$ in Theorem~\ref{qiuyefqiwgfqvuq} 
can be significantly improved if the variance profile $V$ is doubly stochastic;
see \cite[Section~2.6]{CHNR1} for related open problems.
On the other hand, without any extra assumptions on $A$ and $z$
the bound is close to optimal as the next example shows.

\begin{Example}
Let $d\in\N$ be a parameter such that $n/d$ is an integer, and let $V$ be a block matrix of the form
$$
V=\begin{pmatrix}
0_{d\times d} & V_{1} & 0_{d\times d} & \dots & 0_{d\times d} \\
0_{d\times d} & 0_{d\times d} & V_{2} & \dots & 0_{d\times d} \\
\dots & \dots & \dots & \dots \\
0_{d\times d} & 0_{d\times d} & 0_{d\times d} & \dots & 0_{d\times d}
\end{pmatrix},
$$
where each block $V_{\ell}$, $1\leq \ell<n/d$, is a $d\times d$ matrix of ones.
Set $z:=\sqrt{d}/4$, let $G$ be the $n\times n$ standard real Gaussian matrix,
and denote by $G_1,G_2,\dots,G_{n/d-1}$ be the $d\times d$ submatrices of $G$
corresponding to $V_\ell$'s.

Fix any unit vector $x_{n/d}\in\R^d$. We define $d$--dimensional vectors $x_1,\dots,x_{n/d-1}$
recursively via relations
$$
-z\,x_\ell=G_\ell\,x_{\ell+1},\quad \ell=1,\dots,n/d-1.
$$
Conditioned on any realization of $x_{\ell+1},\dots,x_{n/d}$,
the random vector $G_\ell\,x_{\ell+1}$ has Euclidean norm at least $\sqrt{d}\,\|x_{\ell+1}\|_2/2$
with probability at least $1-2\exp(-cd)$ (for a universal constant $c>0$).
Applying this estimate recursively and using the definition of $z$, we get
$$
\|x_1\|_2\geq 2^{n/d-1}
$$
with probability at least $1-2d\exp(-cd)$.
Assuming the last estimate, we have
$$
s_{\min}(V\odot G-z\,\Id)\leq 2^{1-n/d}\big\|(V\odot G-z\,\Id)\,
(\oplus_{i\leq n/d}\,x_i)\big\|_2=2^{1-n/d}\,|z| \leq\sqrt{d}\,2^{1-n/(16|z|^2)}.
$$
Thus, under the assumption $d=\omega(1)$ we get
$$
s_{\min}(V\odot G-z\,\Id)= \exp\big(-\Omega(n/|z|^2)\big)
$$
with probability $1-o(1)$.
\end{Example}

\subsection{Normal vectors}

In this subsection, we assume that $z$ is a non-zero complex number.

\begin{defi}
For every subset $J\subset[n]$ of size at least $2$ and every $j\in J$ denote by
$\normal_{J,j}$ a random unit vector in $\C^J$ orthogonal to the linear span
of $\col_k(A_J-z\,\Id)$, $k\in J\setminus\{j\}$ and measurable w.r.t the sigma-field
generated by those columns ($\normal_{J,j}$
is not uniquely defined, but we fix some version
of the vector for the rest of the proof). %Note that in view of our assumption
%on the absolute continuity of the distributions of the non-zero entries of $A$,
%$\normal_{J,j}$ is defined uniquely up to a complex rotation $\exp({\bf i}\,\phi)$
%almost everywhere on the probability space.
%It will be convenient
%from now on to fix any admissible $\normal_{J,j}$ for every choice of $J,j$,
%and refer to it as {\it the} unit normal to $\col_k(A_J-z\,\Id)$, $k\in J\setminus\{j\}$.
\end{defi}

\medskip

\begin{lemma}\label{avfiahcgiaygvciywafiyg}
For every $\rho_0>0$ and $K\geq 1$ and assuming $n$ is sufficiently large,
the following holds. Let
\begin{align*}
\Event_{\text{\tiny\ref{avfiahcgiaygvciywafiyg}}}:=\bigg\{
&\big|\langle \normal_{J,j},\col_j(A_J) \rangle\big|\leq
n\,\sqrt{\sum_{k\in J}|(\normal_{J,j})_k|^2\,|(\col_j(V_J))_k|^2}\\
&\mbox{for all $J\subset[n]$ with $|J|\geq 2$, and all $j\in J$}
\bigg\}
\end{align*}
Then $\Prob(\Event_{\text{\tiny\ref{avfiahcgiaygvciywafiyg}}})
\geq 1-\exp(-c_{\text{\tiny\ref{avfiahcgiaygvciywafiyg}}}\,n^2)$,
where $c_{\text{\tiny\ref{avfiahcgiaygvciywafiyg}}}>0$ depends only on $K$.
\end{lemma}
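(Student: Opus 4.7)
The plan is a direct conditioning-and-union-bound argument exploiting that the vector $\normal_{J,j}$, being measurable with respect to $\{\col_k(A_J - z\,\Id) : k \in J \setminus \{j\}\}$, is independent of the $j$-th column $\col_j(A_J)$ by the coordinate-wise independence asserted in Assumption~\ref{thkrnhoiwgbouibuve}.

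First, I would fix a pair $(J,j)$ with $|J| \geq 2$ and $j \in J$ and condition on any realization of the columns $\col_k(A_J-z\,\Id)$, $k \in J\setminus\{j\}$. Under this conditioning $\normal_{J,j}$ becomes a deterministic unit vector $v \in \C^J$, while
$$
\langle \normal_{J,j},\,\col_j(A_J)\rangle = \sum_{k \in J} v_k\, V_{k,j}\, W_{k,j}
$$
is a linear combination with deterministic coefficients of mutually independent centered $K$--subgaussian random variables of unit absolute second moment. Theorem~\ref{akjfgifyutviywvgcywv} (applied separately to the real and imaginary parts in the complex case) then implies that this linear combination is subgaussian with subgaussian norm at most $C(K)\,\bigl(\sum_k |v_k|^2 V_{k,j}^2\bigr)^{1/2}$. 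Applying the standard subgaussian tail bound $\Prob\{|X|>t\}\leq 2\exp(-c t^2/\|X\|_{\psi_2}^2)$ with $t$ equal to $n$ times this scale yields the conditional, and hence unconditional, estimate
$$
\Prob\bigg\{\bigl|\langle \normal_{J,j},\col_j(A_J)\rangle\bigr| > n\,\sqrt{\sum_{k \in J} |(\normal_{J,j})_k|^2\,V_{k,j}^2}\bigg\} \leq 2\exp(-c(K)\,n^2).
$$

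To finish I would take a union bound over all $J \subset [n]$ with $|J|\geq 2$ and all $j \in J$. The number of such pairs is at most $n\,2^n$, so the total failure probability for the event $\Event_{\text{\tiny\ref{avfiahcgiaygvciywafiyg}}}$ is bounded by $2n\,2^n \exp(-c(K)\,n^2)$, which is at most $\exp(-c_{\text{\tiny\ref{avfiahcgiaygvciywafiyg}}}\,n^2)$ for a suitable $c_{\text{\tiny\ref{avfiahcgiaygvciywafiyg}}}>0$ depending only on $K$, provided $n$ is large enough. There is no real obstacle here: the deliberately generous factor $n$ in the threshold (rather than, say, $\sqrt{\log n}$) is chosen precisely so that the quadratic exponent from the subgaussian tail absorbs the $2^n$ combinatorial loss from enumerating subsets of $[n]$, and this is the only place where the size of $n$ enters the argument nontrivially.
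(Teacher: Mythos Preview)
Your proposal is correct and follows essentially the same approach as the paper's own proof: condition on the columns indexed by $J\setminus\{j\}$ so that $\normal_{J,j}$ becomes deterministic, apply Theorem~\ref{akjfgifyutviywvgcywv} to the resulting linear combination of independent $K$--subgaussian variables, invoke the subgaussian tail at level $n$, and take a union bound over the at most $n\,2^n$ pairs $(J,j)$. The only cosmetic difference is that the paper explicitly notes the degenerate case where all coefficients $(\normal_{J,j})_k\,V_{k,j}$ vanish (the inequality then holds trivially since both sides are zero), which you omit but which poses no issue.
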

\begin{proof}
Fix for a moment any $J\subset[n]$ with $|J|\geq 2$, and any $j\in J$,
and condition on any realization of $\col_k(A_J-z\,\Id)$, $k\in J\setminus\{j\}$
(so that $\normal_{J,j}$ is fixed but $\col_j(A_J)$ is random).
We will assume that $(\normal_{J,j})_k\,(\col_j(V_J))_k\neq 0$ for some $k\in J$.
The product $\langle \normal_{J,j},\col_j(A_J)\rangle$
can be viewed as a linear combination of mutually independent $K$--subgaussian centered variables
having unit absolute second moments, with coefficients
$$
(\col_j(V_J))_k(\normal_{J,j})_k,\quad k\in J.
$$
Hence, $\langle \normal_{J,j},\col_j(A_J)\rangle$ is centered, with absolute second moment
$$
\sum_{k\in J}|(\normal_{J,j})_k|^2\,|(\col_j(V_J))_k|^2,
$$
and the normalized variable
$$
\frac{\langle \normal_{J,j},\col_j(A_J)\rangle}{\sqrt{\sum_{k\in J}|(\normal_{J,j})_k|^2\,|(\col_j(V_J))_k|^2}}
$$
is $CK$--subgaussian for some universal constant $C>0$
(see Theorem~\ref{akjfgifyutviywvgcywv}).
The definition of a subgaussian variable then implies
$$
\Prob\bigg\{
\big|\langle \normal_{J,j},\col_j(A_J) \rangle\big|>
n\,\sqrt{\sum_{k\in J}|(\normal_{J,j})_k|^2\,|(\col_j(V_J))_k|^2}\bigg\}
\leq 2\exp\big(-cn^2/K^2\big),
$$
for a universal constant $c>0$.
Assuming that $n$ is sufficiently large and taking the unit bound over all possible choices of $J$ and $j$,
we get the result.
\end{proof}

\begin{Remark}
The ``good'' event defined in the last lemma is designed to rule out the situation
when both $\langle \normal_{J,j},\col_j(A_J-z\,\Id)\rangle$ and
$$
\sqrt{\sum_{k\in J}|(\normal_{J,j})_k|^2\,|(\col_j(V_J))_k|^2}
$$
are ``small'' whereas the $j$--th component of the normal
$(\normal_{J,j})_j$ is ``large''; see the proof of Lemma~\ref{akufvawiyfvufcuytvu} below
(specifically, formula~\eqref{ahgvuaytvutcwcvut}) for details.
\end{Remark}

\subsection{A directed graph on submatrices}\label{alrberufvifyviyqviv}

Suppose that the matrices $A,V,W$
satisfy Assumptions~\ref{thkrnhoiwgbouibuve} and~\ref{agedytewrcuaciarebv},
and assume additionally that $\sigma^*\leq |z|$.
Define parameters
$$
\delta:=\frac{|z|}{n},\quad L:=n^{-\kappa}\,|z| ,\quad \beta:=n^{-3}.
$$
Further, let $\tilde V$ be the $n\times n$ matrix obtained from $V$ by replacing elements
of $V$ of magnitude at most $\delta$ with zeros, namely,
$$
\tilde V_{i,j}:=
\begin{cases}
V_{i,j},&\mbox{if $V_{i,j}>\delta$};\\
0,&\mbox{otherwise.}
\end{cases}
$$

In this subsection, we construct a directed graph $\mathcal G$,
in which each vertex is a square random matrix of the form $A_J-z\,\Id$ for some $J\subset[n]$.
In particular, the constructed graph would satisfy the following properties:
\begin{itemize}
\item Any out-neighbor of any vertex 
is a proper principal submatrix of that vertex;
\item All vertices of $\mathcal G$ but one have at least one in-neighbor;
\item The unique vertex that has only out-neighbors and no in-neighbors (the ``source'') is the matrix
$A-z\,\Id$.
\end{itemize}
Note that the above properties imply that
$\mathcal G$ has no {\it directed} cycles, and is connected (although not strongly connected).
The edges of the graph $\mathcal G$ are labeled (see further).
The vertices of the graph which have only in-neighbors but no out-neighbors
will be called {\it terminals}.

The graph $\mathcal G$ is constructed from the source $A-z\,\Id$ to terminals,
according to the following process.
Assume that for some $J\subset[n]$, the matrix $A_J-z\,\Id$
has been added to the vertex set of $\mathcal G$.
Denote by $J'\subset J$ the subset of all indices $j\in J$ such that
$$
\sum_{i\in J}(V_J)_{i,j}^2\leq L^2,
$$
and let $J''$ be the complement of $J'$ in $J$.
One can interpret $J'$ and $J''$ as sets of indices corresponding to ``sparse'' 
and ``dense'' columns
of $A_J-z\,\Id$, respectively.
For each $j\in J''$, we let $U_j$ be the set of all indices $i\in J$
with $\tilde V_{i,j}\neq 0$ (note that in view of the definition of $\delta$,
$U_j\neq\emptyset$ for every $j\in J''$). Observe that for every $j\in J''$,
$$
L^2<\sum_{i\in J}(V_J)_{i,j}^2
\leq \sum_{i\in J}(\tilde V_J)_{i,j}^2+|J|\,\delta^2
\leq |U_j|\,(\sigma^*)^2+|J|\,\delta^2,
$$
and hence
\begin{equation}\label{aouyevfiuytfviygcviqyg}
|U_j|> \frac{L^2}{2(\sigma^*)^2}.
\end{equation}
\begin{itemize}
\item[(a)] If $J''$ is empty then we declare $A_J-z\,\Id$ to be a terminal of $\mathcal G$.
\item[(b)] Otherwise,

\noindent For each $j\in J''$,
\begin{itemize}
\item If $A_{J\setminus (U_j\cup\{j\})}-z\,\Id$ is not yet in the vertex set of $\mathcal G$
then we add the matrix to the vertex set (here, in case $U_j\cup\{j\}=J$ the
matrix $A_{J\setminus (U_j\cup\{j\})}-z\,\Id$ is empty).
\item We draw a directed edge from $A_J-z\,\Id$ to $A_{J\setminus (U_j\cup\{j\})}-z\,\Id$
and label the edge by the column index $j$.
\end{itemize}
Furthermore, if $J'$ is non-empty then
\begin{itemize}
\item If $A_{J'}-z\,\Id$ is not yet in the vertex set of $\mathcal G$
then we add the matrix to the vertex set.
\item If there is no edge from $A_J-z\,\Id$ to $A_{J'}-z\,\Id$ yet then we draw the directed edge and 
assign empty label ``'' to it.
\end{itemize}
Repeat (a)--(b) for each of the newly added vertices of $\mathcal G$.
\end{itemize}
Note that although the vertices of $\mathcal G$ are random matrices, the edge structure of the
graph is deterministic i.e does not depend on a realization of $A$.
%The only situation in which a multiedge is created in the above process
%is $J\setminus (U_j\cup\{j\})=J'$ for some $j\in J''$. In that case there are
%two directed edges leading from $A_J-z\,\Id$ to $A_{J'}-z\,\Id$ labeled $j$ and $0$,
%and $A_{J'}-z\,\Id\neq [\;]$ is a terminal.

The terminals of the graph are of two types: either the empty matrix $[\;]$
or a submatrix $A_J-z\,\Id$ such that 
$$
\sum_{i\in J}(V_J)_{i,j}^2\leq L^2
$$
for all $j\in J$.
We will refer to the latter as {\bf non-empty terminals}.
Note further that for every vertex $A_J-z\,\Id$ which is not a terminal, we have
$|J|\geq L^2/(\sigma^*)^2$.
We refer to Figures~\ref{aknboewiybwiubvieuyb} and~\ref{auhyuaytewutcwrv}
for examples of the graph $\mathcal G$.

\begin{Remark}\label{aihbvieruvhbobitrnboirn}
%Each vertex $v$ of $\mathcal G$ has at most $n$ out-neighbors, and,
In view of
\eqref{aouyevfiuytfviygcviqyg}, for every out-neighbor $v'$ of $v$ which is not a terminal,
the difference of linear dimensions of the matrices $v$ and $v'$ is greater than
$\frac{L^2}{2(\sigma^*)^2}$.
Therefore, any directed path connecting the source with a terminal
has length at most $\lceil 2n(\sigma^*)^2/L^2\rceil$.
%As a consequence, the total number of terminals
%in the multigraph is bounded above by $n^{\lceil 2n(\sigma^*)^2/L^2\rceil}$.
\end{Remark}

\begin{figure}[h]
\caption{Example of a graph $\mathcal G$ constructed for a $4\times 4$
matrix $A-z\,\Id$, where $A$ is lower triangular, with non-zero entries having unit variances,
and with parameter $L^2:=2$. The non-zero entries are represented by ``x''.
With the notation from the construction process for $\mathcal G$,
for the source $A-z\,\Id$ we have $J'=\{3,4\}$, $J''=\{1,2\}$, and $U_1=\{1,2,3,4\}$, $U_2=\{2,3,4\}$.
The source has three out-neighbors: the empty matrix and two non-empty terminals
$A_{J'}-z\,\Id$ and $A_{\{1\}}-z\,\Id$ (we represent the submatrices by ``crossing out''
the complementary rows and columns).}\label{aknboewiybwiubvieuyb}
\centering
\includegraphics[width=0.5\textwidth]{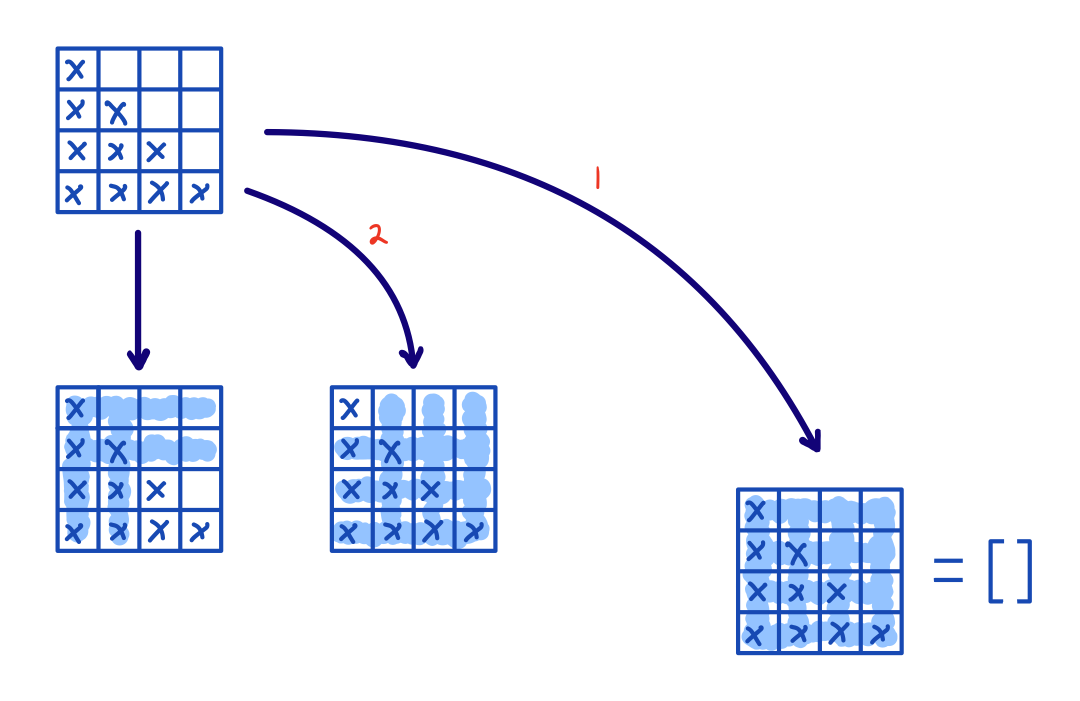}
\end{figure}

\begin{figure}[h]
\caption{Graph $\mathcal G$ for a $6\times 6$
matrix $A-z\,\Id$, where $A$ is a periodic band matrix
with non-zero entries having unit variances,
and parameter $L^2:=2$. 
We show only two terminals (out of seven) in the picture.}
\label{auhyuaytewutcwrv}
\centering
\includegraphics[width=0.8\textwidth]{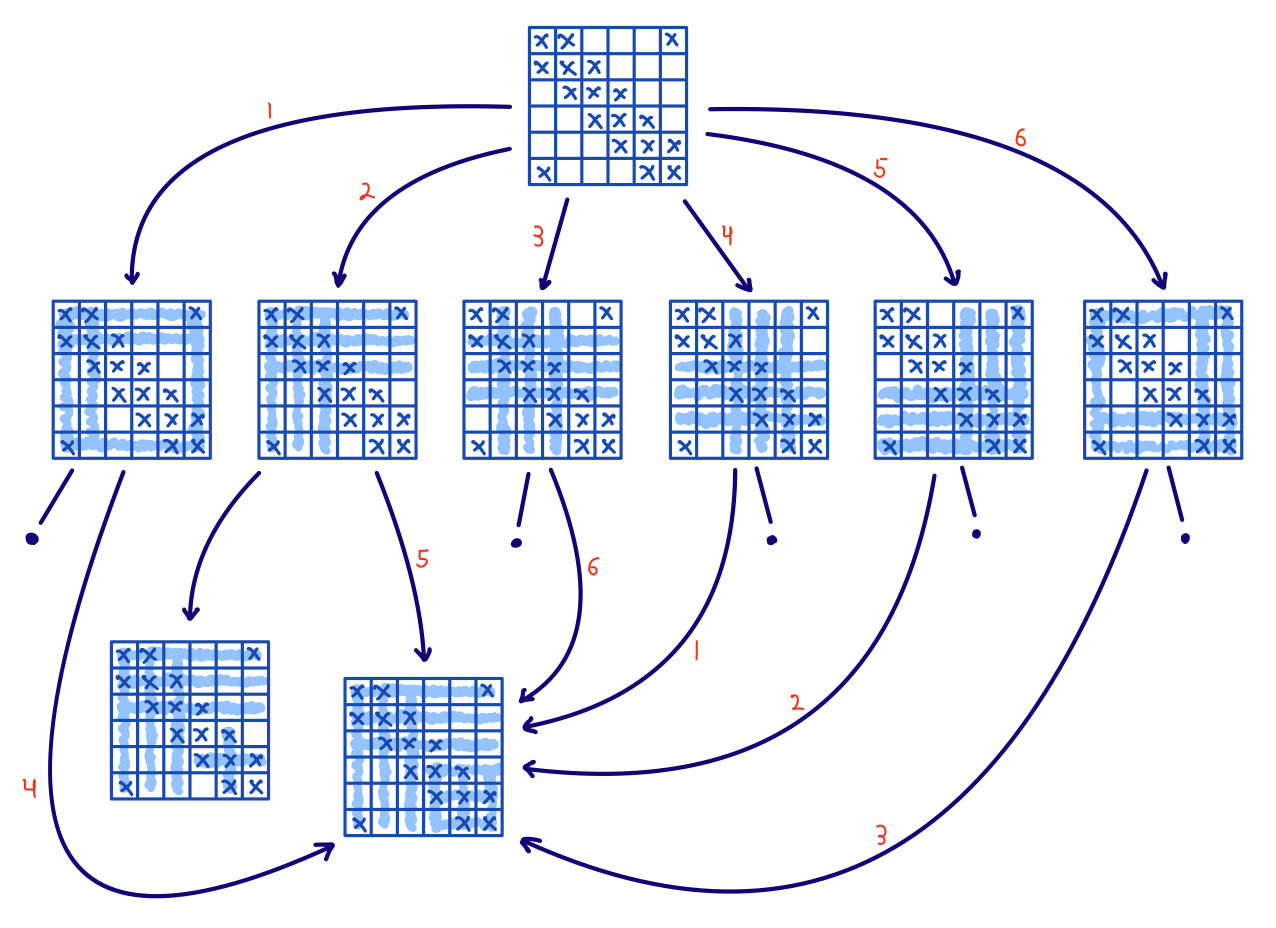}
\end{figure}

The basis of the proof of Theorem~\ref{qiuyefqiwgfqvuq}
is a ``deconstruction'' argument in which the event that $s_{\min}(A-z\,\Id)$ small
is associated with a random path on $\mathcal G$ leading from the source to a terminal,
and each submatrix along the path satisfies certain bound on its smallest singular value.
Those estimates, in turn, are related to distribution of distances between a column of a submatrix
to the span of other columns. Ultimately, the event ``$s_{\min}(A-z\,\Id)$ is small''
is estimated by a product of probabilities that distances between certain
random vectors and random subspaces are small, plus the event that some of the
non-empty terminals are ill-conditioned.
Lemma~\ref{akufvawiyfvufcuytvu} which lies at the core of the argument, establishes 
a single step of the deconstruction process. Then, in Proposition~\ref{aiytevutvcutvirvuveo}, the multiple steps
are aggregated with help of auxiliary data structures specifying a way to descend through $\mathcal G$.

\begin{notation}
In what follows, we adopt the convention $s_{\min}([\;]):=+\infty$.
\end{notation}

\begin{notation}
We define numbers
$$
t_p:=2^{-p},\quad p=0,1,2,\dots;\quad t_{-1}:=+\infty.
$$
Further, let $p_0$ be the largest number in $\{0,\dots,n\}$ such that $t_{p_0}\geq \beta\,n^{2}$,
and let $\tilde p_0$ be the smallest number in $\{0,\dots,n\}$ such that $t_{\tilde p_0}
\leq (32\,n^{3}/\beta)^{-1}$. 
\end{notation}

\begin{lemma}\label{akufvawiyfvufcuytvu}
Condition on any realization of the matrix $A$ from
$\Event_{\text{\tiny\ref{avfiahcgiaygvciywafiyg}}}
\cap\Event_{\text{\tiny\ref{itfufytcuqtrcutqwxi}}}$. 
%and assume additionally that the complex number $z$ satisfies
%$|z|\leq ?? n^2-n$.
Let $N= A_J-z\,\Id$ be a vertex of
$\mathcal G$ which is not a terminal,
such that $s_{\min}(N)\leq t_q\,|z|$ for some $q\in\{0,\dots,n\}$.
Then at least one of the following holds:
\begin{itemize}
\item There is $r\in\{-1,\dots,n\}$ and an out-edge $e$ for $N$ with a non-empty label $j$
such that the corresponding out-neighbor $A_{\tilde J}-z\,\Id$ of $N$ satisfies
$s_{\min}(A_{\tilde J}-z\,\Id)\leq t_{r}\,|z|$, and
$$
\bigg|\frac{\langle\normal_{J,j},\col_j(A_J-z\,\Id)\rangle}
{\sqrt{\sum_{k\in J}|(\normal_{J,j})_k|^2\,|(\col_j(V_J))_k|^2}}\bigg|\leq t_{\max(q-r-\tilde p_0,-1)}.
$$
\item There is a zero-labeled
edge connecting $A_J-z\,\Id$
to a non-empty terminal $A_{\tilde J}-z\,\Id$
such that
$$
s_{\min}(A_{\tilde J}-z\,\Id)< t_{p_0}\,|z|.  %t_{\min(p_0,\max(q-\tilde p_0,-1))}\,|z|.
$$
\end{itemize}
\end{lemma}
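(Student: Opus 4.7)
The plan is to take a near-null unit vector $x\in\C^J$ with $\|Nx\|_2\leq t_q|z|$ and, through a case analysis, either confirm the second bullet directly or produce the required labeled out-edge using the normal vector to the remaining columns.

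First I would dispose of the terminal case. Since every column of $A_{J'}$ is already ``sparse'' by the construction of $J'$ (so $(J')'=J'$), the matrix $A_{J'}-z\,\Id$ is automatically a non-empty terminal whenever $J'\neq\emptyset$. If $s_{\min}(A_{J'}-z\,\Id)<t_{p_0}|z|$ the second bullet is immediate, so I may assume the opposite inequality (or $J'=\emptyset$). Writing $x=x'\oplus x''$ with supports in $J'$ and $J''$ respectively, restricting the identity $(Nx)|_{J'}=(A_{J'}-z\,\Id)x'+A_{J',J''}x''$ to $J'$, and combining $\|A_{J',J''}\|\lesssim R|z|\log^{5/2} n$ (from $\Event_{\text{\tiny\ref{itfufytcuqtrcutqwxi}}}$) with the standing lower bound $s_{\min}(A_{J'}-z\,\Id)\geq t_{p_0}|z|$, I would derive a concentration estimate $\|x''\|_2\gtrsim R^{-1}\log^{-5/2}n$ in the range of $q$ where the ratio bound of the first bullet is non-trivial. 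In the complementary range of small $q$, the first bullet holds with $r=-1$ via the $\max(\cdot,-1)$ clause.

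In the main case I pick $j^*\in J''$ maximizing $|x_j|$, so $|x_{j^*}|\gtrsim n^{-1/2}R^{-1}\log^{-5/2}n$, and set the dyadic level $p$ by $|x_{j^*}|\in(t_p,t_{p-1}]$, which forces $p\leq\tfrac12\log_2 n+O(\log\log n)$. Orthogonality of $\normal_{J,j^*}$ to $\col_k(N)$ for $k\neq j^*$ immediately gives
$$
|\langle\normal_{J,j^*},\col_{j^*}(N)\rangle|\leq\frac{\|Nx\|_2}{|x_{j^*}|}\leq t_{q-p}|z|.
$$
Decomposing $\normal_{J,j^*}=u\oplus v$ along $\tilde J:=J\setminus(U_{j^*}\cup\{j^*\})$ and its complement, orthogonality against the columns indexed by $\tilde J$ produces the identity
$$
u^T(A_{\tilde J}-z\,\Id)=-v^T A_{U_{j^*}\cup\{j^*\},\tilde J},
$$
and hence $s_{\min}(A_{\tilde J}-z\,\Id)\leq(\|v\|/\|u\|)\,\|A_{U_{j^*}\cup\{j^*\},\tilde J}\|\lesssim(\|v\|/\|u\|)R|z|\log^{5/2}n$. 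I would take $r$ so that $t_r|z|$ matches this bound when $\|u\|\geq 1/\sqrt 2$, and set $r=-1$ otherwise.

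It remains to verify the ratio bound. Using the denominator lower bound $\sqrt{D_{j^*}}\geq\delta\,\|\normal_{J,j^*}|_{U_{j^*}}\|$ together with the a priori estimate $|z|\,|(\normal_{J,j^*})_{j^*}|\leq n\sqrt{D_{j^*}}+t_{q-p}|z|$ coming from $\Event_{\text{\tiny\ref{avfiahcgiaygvciywafiyg}}}$, a short case split on whether $|(\normal_{J,j^*})_{j^*}|$ dominates $v$ delivers the target $|\langle\normal_{J,j^*},\col_{j^*}(A_J-z\,\Id)\rangle|/\sqrt{D_{j^*}}\leq t_{q-r-\tilde p_0}$. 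The hard part will be the bookkeeping: one has to verify that in every sub-case the slack $\tilde p_0\asymp\log n$ absorbs the $\tfrac12\log_2 n$ loss from $|x_{j^*}|\gtrsim n^{-1/2}$, the $\log_2 n$ loss from $\delta=|z|/n$ in the denominator, and the $O(\log\log n)$ loss from the $\log^{5/2}n$ factor in the submatrix-norm bound, so that the chosen $r$ meets both inequalities simultaneously.
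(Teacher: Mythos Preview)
Your plan is essentially the paper's argument with the two cases taken in the opposite order: the paper fixes a threshold $\beta=n^{-3}$ and splits on whether some $j\in J''$ has $|x_j|\geq\beta$ (Case~I leads to the first bullet via the normal-vector computation, Case~II gives the second bullet directly), whereas you first dispose of the second bullet and then extract a lower bound on $\max_{j\in J''}|x_j|$. The core mechanism---controlling $|(\normal_{J,j})_j|$ via $\Event_{\text{\tiny\ref{avfiahcgiaygvciywafiyg}}}$, lower-bounding $\sqrt{D_j}$ by $\delta\,\|\normal|_{U_j}\|$, and transferring near-nullity to $A_{\tilde J}-z\,\Id$ through the orthogonality relations---is identical. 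Your identity $u^T(A_{\tilde J}-z\,\Id)=-v^T A_{U_{j^*}\cup\{j^*\},\tilde J}$ restricted to the $\tilde J$ columns is a slightly tidier variant of the paper's estimate~\eqref{auhbfieaubwifubiqwyg}, which instead uses all columns $k\neq j$ and the crude bound $\|A-z\,\Id\|\leq (n+1)|z|$.

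There is one quantitative slip. From the restricted identity and $s_{\min}(A_{J'}-z\,\Id)\geq t_{p_0}|z|$ you only obtain
\[
t_{p_0}\,\|x'\|\;\leq\; t_q + CR\log^{5/2}n\,\|x''\|,
\]
so (in the regime $q\gg p_0$) the conclusion is $\|x''\|\gtrsim t_{p_0}\,R^{-1}\log^{-5/2}n$, and since $t_{p_0}\asymp\beta n^2=n^{-1}$ this gives $|x_{j^*}|\gtrsim n^{-3/2}R^{-1}\log^{-5/2}n$ and $p\lesssim\tfrac32\log_2 n$, not $\tfrac12\log_2 n$. This does not break the argument---$\tilde p_0\approx 6\log_2 n$ still comfortably absorbs the corrected losses---but your bookkeeping paragraph undercounts by one power of $n$. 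The paper sidesteps this entirely: its fixed threshold $\beta=n^{-3}$ is chosen precisely so that in the ``all $|x_j|<\beta$'' case one gets $s_{\min}(A_{J'}-z\,\Id)<\beta n^2|z|\leq t_{p_0}|z|$ directly, without ever needing to assume the terminal is well-conditioned.
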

\begin{proof}
Without loss of generality, we can assume that $q\geq \tilde p_0-1$,
as otherwise the first assertion of the lemma holds with $r=-1$.
In particular, this implies $\frac{16\,n^3\,t_q}{\beta}\leq 1$.

Let subsets $J',J''\subset J$ and sets $U_j$, $j\in J''$ be as in the definition of $\mathcal G$.
The assumption on $s_{\min}(N)$ implies that there is a unit vector $x\in\C^J$ such that
$$
\|(A_J-z\,\Id)x\|_2\leq t_q\,|z|.
$$

\medskip

{\bf(I)} If there is $j\in J''$ such that $|x_j|\geq \beta$ then the distance from $\col_j(A_J-z\,\Id)$
to the linear span of columns $\col_k(A_J-z\,\Id)$, $k\in J\setminus\{j\}$,
is at most $t_q\,|z|/\beta$, i.e
\begin{equation}\label{ajhbfufviweyfviyfviyv}
\big|
-z\,(\normal_{J,j})_j+\langle \normal_{J,j},\col_j(A_J)\rangle
\big|
=
\big|\langle \normal_{J,j},\col_j(A_J-z\,\Id)\rangle \big|\leq t_q\,|z|/\beta.
\end{equation}
In view of our conditioning on $\Event_{\text{\tiny\ref{avfiahcgiaygvciywafiyg}}}$, this implies
\begin{equation}\label{ahgvuaytvutcwcvut}
|z|\,\big|(\normal_{J,j})_j\big|\leq t_q\,|z|/\beta+n\,\sqrt{\sum_{k\in J}|(\normal_{J,j})_k|^2\, |(\col_j(V_J))_k|^2}.
\end{equation}
Let $\tilde r$ be the largest number in $\{0,1,\dots,n\}$ such that
$$
\sqrt{\sum_{k\in J}|(\normal_{J,j})_k|^2\, |(\col_j(V_J))_k|^2}\leq t_{\tilde r}\,|z|
$$
(the assumption that $\sigma^*\leq |z|$ guarantees that such number $\tilde r$ exists).
Then, by \eqref{ajhbfufviweyfviyfviyv} and
\eqref{ahgvuaytvutcwcvut}, %and since $|z|\geq 1$,
\begin{equation}\label{kajhgvfaytfvutrytewrdfad}
\big|(\normal_{J,j})_j\big|\leq t_q/\beta+n\,\tilde t_r
\end{equation}
and
\begin{equation}\label{akhgvfjwfvcwwqfutcbfb}
|\langle\normal_{J,j},\col_j(A_J-z\,\Id)\rangle|
\leq\max\bigg(t_n\,|z|, \sqrt{\sum_{k\in J}|(\normal_{J,j})_k|^2\, |(\col_j(V_J))_k|^2}\bigg)
\cdot \frac{2\,t_q}{\beta\,t_{\tilde r}}.
\end{equation}
Observe that the condition
$$
\sum_{k\in U_j}|(\normal_{J,j})_k|^2 \delta^2\leq
\sum_{k\in J}|(\normal_{J,j})_k|^2\, |(\col_j(V_J))_k|^2\leq t_{\tilde r}^2\,|z|^2
$$
implies that the coordinate projection $v$ of the vector $\normal_{J,j}$ onto $\C^{U_j}$
has the Euclidean norm at most $t_{\tilde r}\,|z|/\delta=n\,t_{\tilde r}$,
and, in view of \eqref{kajhgvfaytfvutrytewrdfad}, the 
coordinate projection $\tilde v$ of $\normal_{J,j}$ onto $\C^{J\setminus (U_j\cup\{j\})}$ satisfies
\begin{equation}\label{ayvauyuwtcwutwcutafu}
\|\tilde v\|_2\geq 1-\|v\|_2-\big|(\normal_{J,j})_j\big|
\geq 1-2n\,t_{\tilde r}-t_q/\beta,
\end{equation}
and, recalling conditioning on $\Event_{\text{\tiny\ref{itfufytcuqtrcutqwxi}}}$ and Remark~\ref{iytvuyovbnotwrhoubh},
\begin{equation}\label{auhbfieaubwifubiqwyg}
\begin{split}
\big\|
\tilde v (A-z\,\Id)_{J\setminus (U_j\cup\{j\}),J\setminus\{j\}}
\big\|_2&\leq \|v\|_2\,\|A-z\,\Id\|+\big|(\normal_{J,j})_j\big|\,\|A-z\,\Id\|\\
&\leq \big(n\,\sigma^*+|z|\big)(t_q/\beta+2n\,t_{\tilde r})\\&\leq (n+1)\,|z|\,(2n t_{\tilde r}+t_q/\beta).
\end{split}
\end{equation}
At this point, we consider several subcases.

If $8n^3\,t_{\tilde r}\geq 1$ then
\eqref{akhgvfjwfvcwwqfutcbfb} implies
$$
\frac{|\langle\normal_{J,j},\col_j(A_J-z\,\Id)\rangle|}{\sqrt{\sum_{k\in J}|(\normal_{J,j})_k|^2\,|(\col_j(V_J))_k|^2}}\leq
 \frac{16\,n^3\,t_q}{\beta},
$$
and the first assertion of the lemma holds with $r:=-1$.

Next, assume that $8n^3\,t_{\tilde r}< 1$.
Note that the assumption and our condition on $q$ imply, in particular, that $2n t_{\tilde r}+t_q/\beta\leq 1/2$,
and from \eqref{ayvauyuwtcwutwcutafu}
and \eqref{auhbfieaubwifubiqwyg} we have
\begin{align*}
s_{\min}(A_{J\setminus (U_j\cup\{j\})}-z\,\Id)
&\leq s_{\min}\Big(\big((A-z\,\Id)_{J\setminus (U_j\cup\{j\}),J\setminus\{j\}}\big)^\top\Big)\\
&\leq \frac{\big\|
\tilde v (A-z\,\Id)_{J\setminus (U_j\cup\{j\}),J\setminus\{j\}}
\big\|_2}{\|\tilde v\|_2}\\
&\leq (n+1)\,|z|\,(4n t_{\tilde r}+2t_q/\beta).
\end{align*}
If $4n t_{\tilde r}\leq 2t_q/\beta$ then the last estimate implies
$s_{\min}(A_{J\setminus (U_j\cup\{j\})}-z\,\Id)\leq
4(n+1)\,|z|\, t_q/\beta$, and the first assertion of the lemma holds with $r:=q-\tilde p_0+1$.
On the other hand, if $4n t_{\tilde r}> 2t_q/\beta$ (and, in particular, $t_{\tilde r}\geq 2t_n$)
then
$$
s_{\min}(A_{J\setminus (U_j\cup\{j\})}-z\,\Id)\leq 8n(n+1)\,|z|\,\,t_{\tilde r},
$$
and, by \eqref{akhgvfjwfvcwwqfutcbfb},
$$
\frac{|\langle\normal_{J,j},\col_j(A_J-z\,\Id)\rangle|}
{\sqrt{\sum_{k\in J}|(\normal_{J,j})_k|^2|(\col_j(V_J))_k|^2}}
\leq \frac{2\,t_q}{\beta\,t_{\tilde r}}.
$$
Thus, the first assertion holds with $r:=\tilde r-\lceil\log_2 (8n(n+1))\rceil\geq 0$.

\medskip

{\bf(II)} If $|x_j|< \beta$ for all $j\in J''$ (and also if $J''$ is empty), we have $J'\neq\emptyset$ and
$$
\|(A-z\,\Id)_{J,J'}\,x'\|_2\leq t_q\,|z|+\|(A-z\,\Id)_{J,J''}\,x''\|_2\leq t_q\,|z|+\beta\sqrt{n}\,\|A-z\,\Id\|,
$$
where $x'$ and $x''$ are the coordinate projections of $x$ onto $\C^{J'}$ and $\C^{J''}$, respectively,
and $\|x'\|\geq 1-\beta\sqrt{n}>1/2$. Note further that since $J'\subset J$, we have
relations
$$
s_{\min}\big((A-z\,\Id)_{J'}\big)\leq s_{\min}\big((A-z\,\Id)_{J,J'}\big)
\leq \frac{\|(A-z\,\Id)_{J,J'}\,x'\|_2}{\|x'\|_2}.
$$
Thus, in view of conditioning on $\Event_{\text{\tiny\ref{itfufytcuqtrcutqwxi}}}$, the smallest singular value of $(A-z\,\Id)_{J'}$ is at most
$$2t_q\,|z|+2\beta\sqrt{n}\,\|A-z\,\Id\|\leq
2t_q\,|z|+2\beta\sqrt{n}\,(n+1)\,|z|
< \beta n^{2}\,|z|.$$
The second assertion of the lemma follows.
\end{proof}

\begin{Remark}
Lemma~\ref{akufvawiyfvufcuytvu} is one of two places in the proof
(together with the uniform estimate on the smallest singular values of non-empty terminals
in Proposition~\ref{alejhfbwfuvufceutqwfvutfyv})
where a lower bound on $|z|$ is crucial. In formula~\eqref{kajhgvfaytfvutrytewrdfad},
the lower bound is used to control from above the absolute value of the $j$--th
component of $\normal_{J,j}$. As a consequence of that restriction, and
in qualitative terms,
the situation where simultaneously
(a) $|\langle \normal_{J,j},\col_j(A_J-z\,\Id)\rangle|$ is small, (b) the normalized
inner product $\big|\frac{\langle\normal_{J,j},\col_j(A_J-z\,\Id)\rangle}
{\sqrt{\sum_{k\in J}|(\normal_{J,j})_k|^2\,|(\col_j(V_J))_k|^2}}\big|$ is large,
and (c) $s_{\min}(A_{J\setminus (U_j\cup\{j\})}-z\,\Id)$ is large, is impossible
when conditioned on the event $\Event_{\text{\tiny\ref{avfiahcgiaygvciywafiyg}}}$.
\end{Remark}

\bigskip

\begin{defi}[Data structures]
Denote by $\mathcal M$ the collection of all data structures of the form
$\mathcal S=(\mathcal P=(\mathcal P[\ell])_{\ell=0}^d,(r_\ell)_{\ell=0}^{d})$
(where $d$ is not fixed and depends on $\mathcal S$) such that
\begin{itemize}
\item $\mathcal P$ is a valid path on $\mathcal G$ starting at the source $\mathcal P[0]
=A-z\,\Id$
and ending at a terminal of the graph; %, compatible with $\mathcal L$
%i.e for all $0\leq \ell<d$, there is a directed edge labeled $\mathcal L_{\ell+1}$
%leading from $\mathcal P[\ell]$ to $\mathcal P[\ell+1]$;
\item $r_0,\dots,r_d$ are numbers in $\{-1,0,\dots,n\}$.
\end{itemize}
\end{defi}

The next lemma is an immediate consequence of Remark~\ref{aihbvieruvhbobitrnboirn}:
\begin{lemma}[Size of $\mathcal M$]\label{akgvutrcytqrcwxtqwevuyc}
The total number of data structures in $\mathcal M$ is bounded above by
$$
\big(n(n+2)\big)^{\lceil 2n(\sigma^*)^2/L^2\rceil}.
$$
\end{lemma}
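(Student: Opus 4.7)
The plan is a direct counting argument resting on the path-length bound from Remark~\ref{aihbvieruvhbobitrnboirn}: every directed path in $\mathcal{G}$ from the source $A - z\,\Id$ to a terminal has length at most $D := \lceil 2n(\sigma^*)^2/L^2\rceil$. Consequently, every $\mathcal{S} = (\mathcal{P}, (r_\ell)_{\ell=0}^d) \in \mathcal{M}$ has $d \in \{0, 1, \ldots, D\}$, and it suffices to enumerate the (path, $r$-tuple) pairs for each such~$d$.

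First I would bound the number of paths of length $d$ starting at the source. A path is determined by its sequence of $d$ edge choices, and at every non-terminal vertex $A_J - z\,\Id$ the out-edges are labeled either by a column index $j \in J''$ or by the empty label (the latter only if $J' \neq \emptyset$). Since $J'$ and $J''$ partition $J$, the out-degree is at most $|J''| + \mathbf{1}_{J' \neq \emptyset} \leq |J'| + |J''| = |J| \leq n$. Hence there are at most $n^d$ paths of length exactly $d$. Once such a path is fixed, the tuple $(r_0, \ldots, r_d)$ consists of $d+1$ independent choices from $\{-1, 0, 1, \ldots, n\}$, a set of cardinality $n+2$, contributing a factor of $(n+2)^{d+1}$. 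Summing over $d$,
$$
|\mathcal{M}| \;\le\; \sum_{d=0}^{D} n^d (n+2)^{d+1},
$$
which is a geometric sum dominated, up to a polynomial prefactor absorbed into the exponent, by $\big(n(n+2)\big)^{D}$; the stated bound then follows (with the degenerate case $D=0$ handled separately, where $\mathcal M$ is trivially finite).

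There is no genuine obstacle in this lemma: the substantive content is packaged entirely in Remark~\ref{aihbvieruvhbobitrnboirn}, whose path-length bound in turn rests on the dimension-drop inequality \eqref{aouyevfiuytfviygcviqyg} which guarantees that each non-terminal step of the graph shrinks the indexing set by more than $L^2/(2(\sigma^*)^2)$. The remainder is bookkeeping over at most $n$ edge choices and $n+2$ parameter choices per vertex.
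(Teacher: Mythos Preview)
Your approach is exactly the paper's: the lemma is stated as an immediate consequence of Remark~\ref{aihbvieruvhbobitrnboirn}, and your counting (out-degree $\le n$ at each non-terminal vertex, $n+2$ choices for each $r_\ell$) is the natural way to unpack that reference. The one loose end is arithmetic: the sum $\sum_{d=0}^{D} n^{d}(n+2)^{d+1}$ is of order $(n+2)\,\big(n(n+2)\big)^{D}$, so a polynomial prefactor genuinely survives and cannot be ``absorbed into the exponent'' to recover the bound exactly as stated; the paper's one-line proof is equally informal on this point, and since the lemma is used only inside a union bound where the exponent's order of magnitude is all that matters (see the proof of Theorem~\ref{qiuyefqiwgfqvuq}), the discrepancy is immaterial.
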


\begin{defi}[$A$-compatible data structures]
Let $\mathcal S=((\mathcal P[\ell])_{\ell=0}^d,(r_\ell)_{\ell=0}^{d})\in\mathcal M$.
We say that $\mathcal S$ is $A$-compatible for a given realization of $A$ if
$s_{\min}(\mathcal P[\ell])\leq r_\ell\,|z|$ for all $0\leq \ell\leq d$.
\end{defi}

\begin{prop}\label{aiytevutvcutvirvuveo}
%Let $|z|\leq n^2-n$.
Let $r_0\leq n$ be the largest integer such that
$\exp\big(-\frac{n^{1+3\kappa}(\sigma^*)^2}{|z|^2}\big)\leq t_{r_0}$.
Condition on any realization of the random matrix $A$ from
$\Event_{\text{\tiny\ref{avfiahcgiaygvciywafiyg}}}
\cap\Event_{\text{\tiny\ref{itfufytcuqtrcutqwxi}}}$
such that $s_{\min}(A-z\,\Id)\leq t_{r_0}\,|z|$. Then there exists an $A$-compatible structure
$\mathcal S=(\mathcal P=(\mathcal P[\ell])_{\ell=0}^d,(r_\ell)_{\ell=0}^{d})$
having the following property.
Whenever $1\leq\ell\leq d$ is such that $r_{\ell}< r_{\ell-1}-\tilde p_0$ then necessarily
\begin{itemize}
\item Either %$\mathcal L_\ell\neq 0$ and
%$\mathcal P[\ell]=A_{J_\ell}-z\,\Id$ is not a leaf of the second type, and
$$
\bigg|\frac{\langle\normal_{J_{\ell-1},j_{\ell-1}},\col_{j_{\ell-1}}(A_{J_{\ell-1}}-z\,\Id)\rangle}
{\sqrt{\sum_{k\in J_{\ell-1}}|(\normal_{J_{\ell-1},j_{\ell-1}})_k|^2
|(\col_{j_{\ell-1}}(V_{J_{\ell-1}}))_k|^2}}\bigg|\leq t_{r_{\ell-1}-r_\ell-\tilde p_0},
$$
where $\mathcal P[\ell-1]=A_{J_{\ell-1}}-z\,\Id$, $\mathcal P[\ell]=A_{J_\ell}-z\,\Id$,
and where $j_{\ell-1}$
is the (non-empty) label of the edge connecting $\mathcal P[\ell-1]$
with $\mathcal P[\ell]$,
\item Or, if the first condition does not hold then
$\mathcal P[\ell]=A_{J_\ell}-z\,\Id$ is a non-empty terminal
(and $\ell=d$), and $r_\ell\geq p_0$.
\end{itemize}
\end{prop}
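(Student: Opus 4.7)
The plan is to build $\mathcal S$ inductively, descending through $\mathcal G$ one vertex at a time by iterating Lemma~\ref{akufvawiyfvufcuytvu}. We initialize $\mathcal P[0]:=A-z\,\Id$ with the prescribed $r_0$; by the conditioning $s_{\min}(A-z\,\Id)\leq t_{r_0}\,|z|$, the pair is $A$-compatible at the source. Assuming $\mathcal P[\ell]=A_{J_\ell}-z\,\Id$ has been constructed, is non-terminal, and satisfies $s_{\min}(\mathcal P[\ell])\leq t_{r_\ell}\,|z|$, we invoke Lemma~\ref{akufvawiyfvufcuytvu} with $N=\mathcal P[\ell]$ and $q=r_\ell$.

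In the first alternative of the lemma, we obtain a column index $j_\ell\in J_\ell$ and an integer $r\in\{-1,\ldots,n\}$ such that the out-neighbor $A_{\tilde J}-z\,\Id$ reached via the edge labeled $j_\ell$ satisfies $s_{\min}(A_{\tilde J}-z\,\Id)\leq t_r\,|z|$ and the normalized inner product is bounded by $t_{\max(r_\ell-r-\tilde p_0,-1)}$. We set $\mathcal P[\ell+1]:=A_{\tilde J}-z\,\Id$ and $r_{\ell+1}:=r$. In the second alternative, there is a zero-labeled edge to a non-empty terminal $A_{\tilde J}-z\,\Id$ with $s_{\min}<t_{p_0}\,|z|$; we take $\mathcal P[\ell+1]$ to be that terminal and set $r_{\ell+1}:=p_0$. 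In both cases $s_{\min}(\mathcal P[\ell+1])\leq t_{r_{\ell+1}}\,|z|$, so $A$-compatibility is preserved. The recursion halts as soon as $\mathcal P[\ell+1]$ is a terminal, at which point we set $d:=\ell+1$; termination in finitely many steps is guaranteed because $\mathcal G$ is acyclic with path lengths bounded by Remark~\ref{aihbvieruvhbobitrnboirn}.

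It then remains to verify the ``whenever'' clause. Fix $\ell\in\{1,\ldots,d\}$ with $r_\ell<r_{\ell-1}-\tilde p_0$. If $\mathcal P[\ell]$ was produced by the first alternative at step $\ell-1$, then $r_\ell=r$ yields $\max(r_{\ell-1}-r-\tilde p_0,-1)=r_{\ell-1}-r_\ell-\tilde p_0$, so the inner-product estimate furnished by the lemma (with $j:=j_{\ell-1}$) is exactly the first bullet of the clause. Otherwise $\mathcal P[\ell]$ was produced by the second alternative, making it a non-empty terminal with $r_\ell=p_0\geq p_0$, which is the second bullet; in this case $\ell=d$ automatically since the construction halts upon reaching a terminal. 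No significant obstacle arises beyond this bookkeeping---the substantive content of the proposition is already packaged in Lemma~\ref{akufvawiyfvufcuytvu}, and our role is only to chain its single-step conclusion into an end-to-end path through $\mathcal G$ while making consistent choices of the integer parameters $r_\ell$.
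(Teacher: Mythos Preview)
Your proof is correct and follows essentially the same approach as the paper: both arguments build $\mathcal P$ and $(r_\ell)$ by iterating Lemma~\ref{akufvawiyfvufcuytvu} from the source, taking $r_{\ell+1}:=r$ in the first alternative and $r_{\ell+1}:=p_0$ in the second, and stopping upon reaching a terminal. Your explicit verification of the ``whenever'' clause (splitting on which alternative produced $\mathcal P[\ell]$) is slightly more detailed than the paper's version, but the logic is identical.
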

\begin{proof}
We will construct the path $\mathcal P$
and the sequence $(r_\ell)$ in steps, by iteratively applying Lemma~\ref{akufvawiyfvufcuytvu}.
Let $\mathcal P[0]:=A-z\,\Id$ (the number $r_0$ is already defined in the statement of the lemma).
Also denote $J_0:=[n]$ so that $A_{J_0}=A$.
At the first step, by Lemma~\ref{akufvawiyfvufcuytvu}, one of the following must be true:
\begin{itemize}
\item There is $r_1\in\{-1,\dots,n\}$ and an out-neighbor
$\mathcal P[1]:=A_{J_1}-z\,\Id$ of the source %(which is not a leaf of second type)
such that $s_{\min}(A_{J_1}-z\,\Id)\leq t_{r_1}\,|z|$, and
$$
\bigg|\frac{\langle\normal_{J_0,j_0},\col_{j_0}(A_{J_0}-z\,\Id)\rangle}
{\sqrt{\sum_{k\in I_0}|(\normal_{J_0,j_0})_k|^2|(\col_{j_0}(V_{J_0}))_k|^2}}\bigg|
\leq t_{\max(r_0-r_1-\tilde p_0,-1)},
$$
where $j_0$ is the label of the edge connecting the source with $A_{J_1}-z\,\Id$.
If $\mathcal P[1]$ is a terminal %of first type i.e an empty matrix (note that in this case necessarily $r_1=-1$),
then we stop the construction.
\item Otherwise, if the first condition does not hold then
there is a non-empty terminal $\mathcal P[1]:=A_{J_1}-z\,\Id$, which is an out-neighbor of the source,
such that
$$
s_{\min}(A_{J_1}-z\,\Id)\leq %t_{\min(p_0,\max(r_0-\tilde p_0,-1))}\,|z|=
t_{p_0}\,|z|.
$$
We then set $r_1:=p_0$ and stop the construction.
\end{itemize}

\medskip

At $\ell$--th step, $\ell>1$, we are given partially constructed sequences $(\mathcal P[m])_{m=0}^{\ell-1}$
and $(r_m)_{m=0}^{\ell-1}$, where $\mathcal P[\ell-1]$ is not a terminal.
Similarly to the first step,
an application of Lemma~\ref{akufvawiyfvufcuytvu} produces a vertex $\mathcal P[\ell]=A_{J_\ell}-z\,\Id$
which is an out-neighbor of $\mathcal P[\ell-1]$, and a number $r_\ell\in\{-1,0,\dots,n\}$
such that $s_{\min}(\mathcal P[\ell])\leq t_{r_\ell}\,|z|$ and either (a)
%In the case when $\mathcal P[\ell]$ is not a leaf of second type
%As one option, we have
$$
\bigg|\frac{\langle\normal_{J_{\ell-1},j_{\ell-1}},\col_{j_{\ell-1}}(A_{J_{\ell-1}}-z\,\Id)\rangle}
{\sqrt{\sum_{k\in J_{\ell-1}}|(\normal_{J_{\ell-1},j_{\ell-1}})_k|^2\,
|(\col_{j_{\ell-1}}(V_{J_{\ell-1}}))_k|^2}}\bigg|\leq t_{\max(r_{\ell-1}-r_\ell-\tilde p_0,-1)},
$$
where $j_{\ell-1}$ is the (non-empty)
label of the edge connecting $\mathcal P[\ell-1]=A_{J_{\ell-1}}-z\,\Id$
with $\mathcal P[\ell]$, or (b) $\ell=d$ and $\mathcal P[\ell]$ is a non-empty
terminal
with $r_d=p_0$. % \min(p_0,\max(r_{\ell-1}-\tilde p_0,-1))$.
The result follows.
\end{proof}

\medskip

The proof of Theorem~\ref{qiuyefqiwgfqvuq}
is accomplished by estimating the probability
that a fixed data structure $\mathcal S=(\mathcal P,(r_\ell)_{\ell=0}^{d})$
from $\mathcal M$ with $r_0:=\big\lfloor
\log_2\,\exp\big(\frac{n^{1+3\kappa}(\sigma^*)^2}{|z|^2}\big)
\big\rfloor$
is $A$--compatible and satisfies conditions stated in Proposition~\ref{aiytevutvcutvirvuveo},
and then taking the union bound over all $\mathcal S\in\mathcal M$ with
the aforementioned choice of $r_0$.
These probability estimates are obtained
as a combination of three ingredients:
\begin{itemize}
\item A probability bound on the event that any of the non-empty terminals $A_J-z\,\Id$ of
$\mathcal G$ are ill-conditioned, specifically, satisfy $s_{\min}(A_J-z\,\Id)\leq t_{p_0}\,|z|$;
\item Standard anti-concentration estimates for linear combinations of independent
variables (see Lemma~\ref{auevfiwuyfviygvcikygviygv});
\item A ``telescopic'' conditioning argument which takes care of the probabilistic dependencies
between the variables
$$\langle \normal_{J_{\ell},j_\ell},
\col_{j_\ell}(A_{J_{\ell}}-z\,\Id)\rangle,\quad 0\leq \ell\leq d.$$
\end{itemize}
In the following subsection, we apply the Gershgorin--type estimates obtained earlier
to address the first item.

\subsection{Invertibility of non-empty terminals}\label{aefiwtvcytarcytefufwa}

\begin{lemma}\label{auyfiytwvfuyvwfuywvycv}
Let $n\geq m\geq 1$, $\kappa\in(0,1]$, $z\in\C$ and let
$B$ be an $m\times m$ random matrix such that
$$
\Exp\,\|\col_j(B)\|_2^2\leq n^{-2\kappa}\,|z|^2,\quad j\leq m.
$$
Then there is a deterministic permutation $\pi: [m]\to [m]$ and a block representation
of the matrix $X:=\big(B_{\pi(i),\pi(j)}\big)_{i,j\in [m]}$,
$$
X
=\begin{pmatrix}
X_{1,1} & X_{1,2} & \dots & X_{1,k} \\
X_{2,1} & X_{2,2} & \dots & X_{2,k} \\
\dots & \dots & \dots & \dots \\
X_{k,1} & X_{k,2} & \dots & X_{k,k}
\end{pmatrix},
$$
satisfying the following conditions:
\begin{itemize}
\item $k\leq \frac{1}{\kappa}+1$.
\item For every $1\leq j\leq i\leq k$, all rows of the matrix $X_{i,j}$
have expected squared Euclidean norms at most $n^{-\kappa}\,|z|^2$.
\end{itemize}
\end{lemma}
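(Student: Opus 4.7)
The plan is a greedy peeling argument constructing the blocks from last to first. Set $\alpha := n^{-\kappa}|z|^2$. Starting from $T_0 := [m]$, at step $p \geq 1$ declare
$$I^{(p)} := \Bigl\{r \in T_{p-1} : \sum_{c \in T_{p-1}} \Exp\,|B_{r,c}|^2 \leq \alpha\Bigr\},\qquad T_p := T_{p-1}\setminus I^{(p)},$$
halting at step $p = k$ once $T_k = \emptyset$. Relabel the blocks in reverse order via $I_{k-p+1} := I^{(p)}$, so that the block selected first becomes $I_k$. Finally, take the deterministic permutation $\pi$ so that in $X = \big(B_{\pi(i),\pi(j)}\big)_{i,j\in[m]}$ the indices of $I_1,\ldots,I_k$ occupy consecutive blocks of positions in that order. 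The procedure is deterministic since it depends only on the quantities $\Exp\,|B_{r,c}|^2$.

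The row-norm condition is immediate from the construction. Fix $1 \leq j \leq i \leq k$ and $r \in I_i$. The block $I_i$ was selected at step $k-i+1$, at which point the remaining set was exactly $T_{k-i} = I_1 \cup \cdots \cup I_i$. The defining inequality of $I^{(k-i+1)}$ then reads $\sum_{c \in T_{k-i}} \Exp\,|B_{r,c}|^2 \leq \alpha$, and restricting the sum to $I_j \subseteq T_{k-i}$ yields
$$\sum_{c \in I_j} \Exp\,|B_{r,c}|^2 \leq \alpha = n^{-\kappa}|z|^2,$$
which is precisely the bound required on the expected squared Euclidean norm of the row of $X_{i,j}$ corresponding to index $r$.

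The quantitative core is the geometric decay $|T_{p+1}| \leq |T_p|\,n^{-\kappa}$. By construction, each $r \in T_{p+1}$ violates the defining inequality of $I^{(p+1)}$, hence
$$|T_{p+1}|\,\alpha \leq \sum_{r \in T_{p+1}} \sum_{c \in T_p} \Exp\,|B_{r,c}|^2 \leq \sum_{c \in T_p} \Exp\,\|\col_c(B)\|_2^2 \leq |T_p|\,n^{-2\kappa}|z|^2 = |T_p|\,\alpha\,n^{-\kappa},$$
where the second inequality extends the $r$-sum to all of $[m]$ and the third invokes the hypothesis on column norms. Iterating with $|T_0| = m \leq n$ yields $|T_p| \leq n^{1-p\kappa}$; once $p\kappa > 1$ this is strictly below $1$, forcing $|T_p|=0$ since $|T_p|$ is a nonnegative integer. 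The smallest such $p$ is at most $\lfloor 1/\kappa\rfloor + 1 \leq 1/\kappa + 1$, giving the desired bound $k \leq 1/\kappa + 1$. I do not anticipate a serious obstacle; the only subtle point is the reverse-labeling bookkeeping that aligns the condition $j \leq i$ with the inclusion $I_j \subseteq T_{k-i}$.
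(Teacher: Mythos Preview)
Your proof is correct and essentially identical to the paper's. Both arguments iteratively peel off the rows whose restricted expected squared norm is at most $n^{-\kappa}|z|^2$, use the column-norm hypothesis together with the Hilbert--Schmidt identity to obtain the geometric decay $|T_{p}|\leq |T_{p-1}|\,n^{-\kappa}$ (equivalently $|I_\ell|\leq |I_{\ell-1}|\,n^{-\kappa}$ in the paper's notation), and then reverse-index the resulting partition so that the lower-triangular blocks inherit the row bound; your sets $T_p$ coincide with the paper's nested sets $I_p$, and your blocks $I_i$ are exactly the paper's $I_{k-i}\setminus I_{k-i+1}$.
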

\begin{proof}
We will construct the permutation and the block decomposition through an iterative process.
Let $I_0:=[m]$. At first step, we observe that
since the second moment of the Euclidean norm of each column of $B$
is at most $n^{-2\kappa}\,|z|^2$, we have
$
\Exp\,\|B\|_{HS}^2\leq m\,n^{-2\kappa}\,|z|^2.
$
Consequently, the number of rows $\row_i(B)$ with 
$$
\Exp\,\|\row_i(B)\|_2^2\geq n^{-\kappa}\,|z|^2
$$
is at most $m\,n^{-\kappa}$. Let $I_1$ be the set of all indices $i\in [m]$ such that the last inequality holds.

At $\ell$--th step ($\ell>1$), we are given a finite nested sequence of subsets
$[m]=I_0\supset I_1\supset \dots\supset I_{\ell-1}$ where $I_{\ell-1}$ is non-empty.
Denote by $B'$ the $I_{\ell-1}\times I_{\ell-1}$ submatrix of $B$. Applying the same argument as above,
we find a set $I_\ell\subset I_{\ell-1}$ of size at most $|I_{\ell-1}|\,n^{-\kappa}$ such that
rows $\row_i(B')$, $i\in I_{\ell-1}\setminus I_\ell$ all have expected squared
Euclidean norms less than $n^{-\kappa}\,|z|^2$.
The iterative process stops if $I_\ell$ is empty.
Denote the number of steps by $k$.

Note that the resulting sequence
$$
I_0\supset I_1\supset \dots\supset I_{k}
$$
satisfies
$$
|I_\ell|\leq |I_{\ell-1}|\,n^{-\kappa},\quad 1\leq \ell\leq k,
$$
and $I_{k-1}\neq \emptyset$, whence necessarily
$$
\big(n^{\kappa}\big)^{k-1}\leq n,
$$
implying
$$
k\leq \frac{1}{\kappa}+1.
$$
Choose a permutation $\pi$ so that
$$
\{\pi(1),\pi(2),\dots,\pi(|I_\ell|)\}=I_{\ell},\quad 1\leq \ell\leq k-1,
$$
let $X:=\big(B_{\pi(i),\pi(j)}\big)_{i,j\leq m}$, and let $X_{i,j}$, $1\leq i,j\leq k$,
be the blocks in the $k\times k$ block decomposition of $X$, where
$X_{i,j}$ is $|I_{k-i}\setminus I_{k-i+1}|\times |I_{k-j}\setminus I_{k-j+1}|$
for all admissible $i,j$.
Observe that in this setting for every $1\leq j\leq i\leq k$, all rows of the matrix $X_{i,j}$
have expected squared Euclidean norms at most $n^{-\kappa}\,|z|^2$.
The statement follows.
\end{proof}

\begin{prop}\label{alejhfbwfuvufceutqwfvutfyv}
For every $K\geq 1$, $\kappa\in(0,1]$, and $R\geq 1$ there is
$n_{\text{\tiny\ref{alejhfbwfuvufceutqwfvutfyv}}}\in\N$ depending on $K$, $\kappa$, $R$
with the following property. Let $n\geq n_{\text{\tiny\ref{alejhfbwfuvufceutqwfvutfyv}}}$,
and let $z\in\C\setminus\{0\}$
and the matrix $A=V\odot W$ satisfy the assumptions of Theorem~\ref{qiuyefqiwgfqvuq}.
Condition on any realization of $A$ from $\Event_{\text{\tiny\ref{itfufytcuqtrcutqwxi}}}$. 
%$$
%\sqrt{\sum_{j=1}^n V_{i,j}^2}\leq R\,|z|,\quad i\leq n;\quad 
%\sqrt{\sum_{i=1}^n V_{i,j}^2}\leq R\,|z|,\quad j\leq n,
%$$
%and $|z|\geq n^{1/4+\kappa}$.
%Define the event
%$$
%\Event_{\text{\tiny\ref{alejhfbwfuvufceutqwfvutfyv}}}:=
%\big\{
%\mbox{There is
%a non-empty terminal $A_I-z\,\Id$ of $\mathcal G$
%satisfying $s_{\min}(A_I-z\,\Id)< n^{-\kappa/2}|z|$}
%\big\}.
%$$
%Then $\Prob(\Event_{\text{\tiny\ref{alejhfbwfuvufceutqwfvutfyv}}})\leq \exp(-\sqrt{n})$.
Then for every non-empty terminal $A_J-z\,\Id$ of $\mathcal G$, we have
$s_{\min}(A_J-z\,\Id)\geq n^{-\kappa/2}|z|$.
\end{prop}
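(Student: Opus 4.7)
The plan is to combine the block decomposition supplied by Lemma~\ref{auyfiytwvfuyvwfuywvycv} with the block Gershgorin estimate of Corollary~\ref{iuygfqwcyvqtrqvieyrc}, using Proposition~\ref{itfufytcuqtrcutqwxi} as the bridge that controls the spectral norms of the individual blocks.

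First, I would unpack what it means for $A_J-z\,\Id$ to be a non-empty terminal of $\mathcal G$. By case~(a) in the construction of Subsection~\ref{alrberufvifyviyqviv}, every column of $V_J$ satisfies $\sum_{i\in J}(V_J)_{i,j}^2\leq L^2=n^{-2\kappa}|z|^2$, which is exactly the statement that $\Exp\,\|\col_j(A_J)\|_2^2\leq n^{-2\kappa}|z|^2$ for every $j\in J$. This is precisely the hypothesis needed to invoke Lemma~\ref{auyfiytwvfuyvwfuywvycv} with $B:=A_J$ and the given $\kappa$. The lemma supplies a deterministic permutation $\pi$ of $J$ and a $k\times k$ block decomposition of $X:=(A_{\pi(i),\pi(j)})_{i,j}$ with $k\leq 1/\kappa+1$, in which every row of every lower-triangular or diagonal block $X_{i,j}$ (i.e.\ with $j\leq i$) has expected squared Euclidean norm at most $n^{-\kappa}|z|^2$.

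Next, I would bound $\|X_{i,j}\|$ on the event $\Event_{\text{\tiny\ref{itfufytcuqtrcutqwxi}}}$ by applying Proposition~\ref{itfufytcuqtrcutqwxi} to each block, which is itself a submatrix of $A$. For $j\leq i$, both rows and columns of $X_{i,j}$ have expected squared norms at most $n^{-\kappa}|z|^2$ — the row bound from the lemma, the column bound inherited from the terminal condition — hence
$$
\|X_{i,j}\|\leq C_{\text{\tiny\ref{itfufytcuqtrcutqwxi}}}\,n^{-\kappa/2}\,|z|\,\log^{5/2}n \;=:\; \varepsilon\,|z|.
$$
For upper-triangular blocks ($i<j$) the column control remains, but the rows are only bounded by $\sigma\leq R\,|z|$, giving
$$
\|X_{i,j}\|\leq C_{\text{\tiny\ref{itfufytcuqtrcutqwxi}}}\,R\,|z|\,\log^{5/2}n \;=:\; R'\,|z|.
$$

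Finally, I would feed $\varepsilon$ and $R'$ into Corollary~\ref{iuygfqwcyvqtrqvieyrc}. Since $k$ depends only on $\kappa$ and the ratio $R'/\varepsilon$ grows only polylogarithmically in $n$, the hypothesis $2(8R')^{k}\varepsilon\,k\leq 1/2$ reduces to an inequality of the form $n^{-\kappa/2}\cdot\mathrm{polylog}(n)\leq \mathrm{const}(\kappa,R,K)$, which holds once $n\geq n_{\text{\tiny\ref{alejhfbwfuvufceutqwfvutfyv}}}$. The corollary then yields $s_{\min}(X-z\,\Id)\geq \varepsilon\,k\,|z|$, and since $\varepsilon\,k\geq C_{\text{\tiny\ref{itfufytcuqtrcutqwxi}}}\,n^{-\kappa/2}\,\log^{5/2}n\geq n^{-\kappa/2}$ for large $n$, we obtain $s_{\min}(X-z\,\Id)\geq n^{-\kappa/2}|z|$. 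The simultaneous row/column permutation relating $X-z\,\Id$ to $A_J-z\,\Id$ preserves the smallest singular value, finishing the argument. The only delicate point to check is that the polylogarithmic slack from Proposition~\ref{itfufytcuqtrcutqwxi} is absorbed by $n^{-\kappa/2}$ both when verifying the hypothesis of Corollary~\ref{iuygfqwcyvqtrqvieyrc} and in the final comparison $\varepsilon k\geq n^{-\kappa/2}$; both happen automatically because $\kappa$ is a fixed positive constant and $n$ is taken sufficiently large depending on $\kappa, R, K$.
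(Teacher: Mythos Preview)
Your proposal is correct and follows essentially the same approach as the paper: invoke Lemma~\ref{auyfiytwvfuyvwfuywvycv} to obtain the block decomposition with $k\leq 1/\kappa+1$, use the event $\Event_{\text{\tiny\ref{itfufytcuqtrcutqwxi}}}$ to bound the block norms (by $C_{\text{\tiny\ref{itfufytcuqtrcutqwxi}}}\,n^{-\kappa/2}|z|\log^{5/2}n$ on and below the diagonal, and by $C_{\text{\tiny\ref{itfufytcuqtrcutqwxi}}}\,R\,|z|\log^{5/2}n$ everywhere), and then apply Corollary~\ref{iuygfqwcyvqtrqvieyrc}. The verification of the hypothesis $2(8R')^k\varepsilon k\leq 1/2$ and the final inequality $\varepsilon k\geq n^{-\kappa/2}$ via the polylog-vs-power comparison is exactly what the paper does implicitly when it says ``assume $n$ is large''.
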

\begin{proof}
We will assume that $n$ is large.
Condition on any realization of the event $\Event_{\text{\tiny\ref{itfufytcuqtrcutqwxi}}}$,
and consider any non-empty terminal $A_J-z\,\Id$.
Recall that, by construction of $\mathcal G$,
$\Exp\,\|\col_j(A_J)\|_2^2\leq n^{-2\kappa}\,|z|^2$ for all $j\in J$.
Applying Lemma~\ref{auyfiytwvfuyvwfuywvycv}, we get a block matrix
$$
\big((A_J)_{\pi(i),\pi(j)}\big)_{i,j\in J}=:X
=\begin{pmatrix}
X_{1,1} & X_{1,2} & \dots & X_{1,k} \\
X_{2,1} & X_{2,2} & \dots & X_{2,k} \\
\dots & \dots & \dots & \dots \\
X_{k,1} & X_{k,2} & \dots & X_{k,k}
\end{pmatrix},
$$
such that $k\leq \frac{1}{\kappa}+1$ and
for every $1\leq j\leq i\leq k$, all rows of the matrix $X_{i,j}$
have expected squared Euclidean norms at most $n^{-\kappa}\,|z|^2$.
Note further that $\Exp\,\|\col_\ell(X_{i,j})\|_2^2\leq n^{-2\kappa}\,|z|^2$ for every $i,j$.
The definition of $\Event_{\text{\tiny\ref{itfufytcuqtrcutqwxi}}}$ then implies that
\begin{itemize}
\item For every $1\leq j\leq i\leq k$, $\|X_{i,j}\|\leq C_{\text{\tiny\ref{itfufytcuqtrcutqwxi}}}\,n^{-\kappa/2}\,|z|\,\log^{5/2} n$, and
\item For every $1\leq i,j\leq k$,
$\|X_{i,j}\|\leq C_{\text{\tiny\ref{itfufytcuqtrcutqwxi}}}\,R\,|z|\,\log^{5/2} n$.
\end{itemize}
Applying Corollary~\ref{iuygfqwcyvqtrqvieyrc}
with $\varepsilon:=C_{\text{\tiny\ref{itfufytcuqtrcutqwxi}}}\,n^{-\kappa/2}\,\log^{5/2} n$
and with parameter $C_{\text{\tiny\ref{itfufytcuqtrcutqwxi}}}\,R\,\log^{5/2} n$ instead of $R$,
we get
$$
s_{\min}(A_J-z\,\Id)=s_{\min}(X-z\,\Id)\geq \varepsilon\,k\,|z|.
$$
The result follows.
\end{proof}

\subsection{Proof of Theorem~\ref{qiuyefqiwgfqvuq}}

For any structure
$\mathcal S=(\mathcal P,(r_\ell)_{\ell=0}^d)\in\mathcal M$,
where $r_0$ is the largest integer such that
$\exp\big(-\frac{n^{1+3\kappa}(\sigma^*)^2}{|z|^2}\big)
%\frac{n^{1+3\kappa}}{|z|^2}\big)
\leq t_{r_0}$,
we define an auxiliary event
$$
\Event_{\mathcal S}:=
\big\{
\mbox{$\mathcal S=(\mathcal P,(r_\ell)_{\ell=0}^d)$ is $A$--compatible and
satisfies properties enlisted in Proposition~\ref{aiytevutvcutvirvuveo}}
\big\}.
$$

Fix any $\mathcal S\in\mathcal M$ with the above choice of $r_0$,
such that $\Prob(\Event_{\mathcal S}\cap \Event_{\text{\tiny\ref{itfufytcuqtrcutqwxi}}})
\neq 0$. Note that this assumption, combined with Proposition~\ref{alejhfbwfuvufceutqwfvutfyv}
and the inequality $n^{-\kappa/2}>t_{p_0}$, implies that
\begin{itemize}
\item Either $\mathcal P[d]$ is an empty terminal and $r_d= -1$,
\item Or $\mathcal P[d]$ is a non-empty terminal and $r_d< p_0$.
\end{itemize}
The latter, in turn, implies that everywhere on $\Event_{\mathcal S}$,
for all $1\leq \ell\leq d$ such that $r_{\ell}< r_{\ell-1}-\tilde p_0$, the edge connecting
$\mathcal P[\ell-1]$ to $\mathcal P[\ell]$ has a non-empty label $j_\ell$, and
$$
\Bigg|\frac{\langle\normal_{J_{\ell-1},j_{\ell-1}},\col_{j_{\ell-1}}(A_{J_{\ell-1}})\rangle
+\langle\normal_{J_{\ell-1},j_{\ell-1}},\col_{j_{\ell-1}}(-z\,\Id)\rangle}
{\sqrt{\sum_{k\in J_{\ell-1}}|(\normal_{J_{\ell-1},j_{\ell-1}})_k|^2\;
|(\col_{j_{\ell-1}}(V_{J_{\ell-1}}))_k|^2}}\Bigg|\leq t_{r_{\ell-1}-r_\ell-\tilde p_0},
$$
where $\mathcal P[\ell-1]=A_{J_{\ell-1}}-z\,\Id$. %and where
%$j_{\ell-1}$ is the label of the edge connecting $\mathcal P[\ell-1]$
%with $\mathcal P[\ell]$.
At this point, we implement ``telescopic'' conditioning.
Let $\ell_1,\ell_2,\dots,\ell_h$ be the increasing sequence of all indices $\ell\in\{1,\dots,d\}$
with $r_{\ell}< r_{\ell-1}-\tilde p_0$ (we note that there must be at least one such index,
in view of the upper bound $d\leq \lceil 2n(\sigma^*)^2/L^2\rceil$, the definition of $r_0$, and the above
condition that $r_d<p_0$). Define for every $q=1,\dots,h$,
$$
\Event_{\ell_q}:=
\Bigg\{\Bigg|\frac{\langle\normal_{J_{\ell_q-1},j_{\ell_q-1}},\col_{j_{\ell_q-1}}(A_{J_{\ell_q-1}})\rangle
+\langle\normal_{J_{\ell_q-1},j_{\ell_q-1}},\col_{j_{\ell_q-1}}(-z\,\Id)\rangle}
{\sqrt{\sum_{k\in J_{\ell_q-1}}|(\normal_{J_{\ell_q-1},j_{\ell_q-1}})_k|^2\;
|(\col_{j_{\ell_q-1}}(V_{J_{\ell_q-1}}))_k|^2}}\Bigg|\leq t_{r_{\ell_q-1}-r_{\ell_q}-\tilde p_0}\Bigg\},
$$
and observe that each $\Event_{\ell_q}$ is measurable with respect to the sigma--field
generated by $A_{J_{\ell_q-1}}$, and that
$$
\Event_{\mathcal S}\subset \bigcap\limits_{q=1}^h \Event_{\ell_q}.
$$
We can write
\begin{equation}\label{akfviytrefuyeftvwiy}
\Prob(\Event_{\mathcal S})\leq\Prob\big(\Event_{\ell_h}\big)\,
\prod_{q=h-1}^1 \Prob\big(\Event_{\ell_q}\;|\;
\Event_{\ell_{q+1}}\cap \dots\cap \Event_{\ell_h}\big).
\end{equation}
Lemma~\ref{auevfiwuyfviygvcikygviygv} implies that the 
probability of $\Event_{\ell_h}$ is bounded above by $C\,t_{r_{\ell_h-1}-r_{\ell_h}-\tilde p_0}$,
and that for every $1\leq q\leq h-1$, given any realization of $\mathcal P[\ell_q]=A_{J_{\ell_q}}-z\,\Id$,
the conditional probability of 
$\Event_{\ell_q}$
is bounded above by $C\,t_{r_{\ell_q-1}-r_{\ell_q}-\tilde p_0}$ for some $C\geq 1$ depending
only on $\rho_0$. Since the intersection $\Event_{\ell_{q+1}}\cap \dots\cap \Event_{\ell_h}$
is $\mathcal P[\ell_q]$--measurable, we get from \eqref{akfviytrefuyeftvwiy},
$$
\Prob(\Event_{\mathcal S})
\leq \prod\limits_{q=1}^h\big(C\,t_{r_{\ell_q-1}-r_{\ell_q}-\tilde p_0}\big)
\leq C^d\,\prod_{\ell=1}^d t_{\max(0,r_{\ell-1}-r_\ell-\tilde p_0)}
=C^d\,t_{\,\sum_{\ell=1}^d\max(0,r_{\ell-1}-r_\ell-\tilde p_0)},
$$
where $r_d<p_0$.
Note that
$$
\sum_{\ell=1}^d\max(0,r_{\ell-1}-r_\ell-\tilde p_0)\geq r_0-r_d-d\,\tilde p_0\geq r_0-(d+1)\,\tilde p_0,
$$
and therefore
$$
\Prob(\Event_{\mathcal S})\leq C^d\,2^{(d+1)\,\tilde p_0-r_0}.
$$

\medskip

Combining the cases considered above we get that
for every $\mathcal S=(\mathcal P,(r_\ell)_{\ell=0}^d)\in\mathcal M$
with $r_0:=\big\lfloor
\log_2\exp\big(\frac{n^{1+3\kappa}(\sigma^*)^2}{|z|^2}\big)\big\rfloor$,
the probability of
$\Event_{\mathcal S}\cap\Event_{\text{\tiny\ref{itfufytcuqtrcutqwxi}}}$
is bounded by $C^d\,2^{(d+1)\,\tilde p_0-r_0}$.
Taking the union bound over all structures $\mathcal S\in\mathcal M$ with the
above choice of $r_0$ and applying Lemma~\ref{akgvutrcytqrcwxtqwevuyc} and Proposition~\ref{aiytevutvcutvirvuveo}, we
obtain
\begin{align*}
\Prob&\bigg\{s_{\min}(A-z\,\Id)\leq |z|\,\exp\bigg(-\frac{n^{1+3\kappa}(\sigma^*)^2}{|z|^2}\bigg)\bigg\}\\
&\leq \Prob(\Event_{\text{\tiny\ref{avfiahcgiaygvciywafiyg}}}^c)
%+\Prob(\Event_{\text{\tiny\ref{aouyeiyvfiygwadfywcasc}}})
+\Prob(\Event_{\text{\tiny\ref{itfufytcuqtrcutqwxi}}}^c)
+\big(n(n+2)\big)^{\lceil 2n(\sigma^*)^2/L^2\rceil}\cdot C^{\lceil 2n(\sigma^*)^2/L^2\rceil}
\,2^{(\lceil 2n(\sigma^*)^2/L^2\rceil+1)\,\tilde p_0-r_0}.
\end{align*}
The required estimate follows by our choice of parameters and
application of Lemma~\ref{avfiahcgiaygvciywafiyg}
and Proposition~\ref{itfufytcuqtrcutqwxi}.

\section{Applications to the circular law}\label{fiauftucytvuyrbiuehbiu}

Here, we discuss Theorem~\ref{alryywtvcutwercuqyt}
in context of convergence of spectral distributions
to the uniform measure on the unit disc.
In what follows, we assume that for each $n$,
$A_n$ is an $n\times n$ matrix with mutually independent centered
real Gaussian entries, and denote
$$
\sigma_n^*:=\max\limits_{i,j\leq n}\sqrt{\Exp\,|(A_n)_{i,j}|^2},\quad
\sigma_n:=\max\bigg(\max\limits_{j\leq n}\sqrt{\Exp\,\|\col_j(A_n)\|_2^2},
\max\limits_{i\leq n}\sqrt{\Exp\,\|\row_i(A_n)\|_2^2}\bigg).
$$
We recall that $\mu_n$ denotes the empirical spectral distribution of $A_n$.
We further let $G_n$ be an $n\times n$ matrix with
i.i.d real Gaussian entries of zero mean and variance $1/n$.
For each $z\in\C$ define random probability measures
$$
\nu_{A_n}(z):=\frac{1}{n}\sum_{i=1}^n \delta_{s_i^2(A_n-z\,\Id)};\quad
\nu_{G_n}(z):=\frac{1}{n}\sum_{i=1}^n \delta_{s_i^2(G_n-z\,\Id)}.
$$

The next theorem is a version of the {\it replacement principle} from \cite{TV2010}
specialized to our setting.
\begin{theorem}[{Replacement principle, \cite{TV2010}}]
Assume that
\begin{itemize}
\item The expression $\frac{1}{n}\|A_n\|_{HS}^2$
is bounded in probability;
\item For almost all complex numbers $z\in\C$,
$$
\frac{1}{n}\log\big|\det\big(A_n-z\,\Id\big)\big|
-\frac{1}{n}\log\big|\det\big(G_n-z\,\Id\big)\big|
$$
converges in probability to zero. 
\end{itemize}
Then the empirical spectral distributions
$(\mu_n)_{n=1}^\infty$ converge weakly in probability to the uniform
measure on the unit disc of the complex plane.
\end{theorem}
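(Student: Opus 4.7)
The plan is to reduce weak convergence of $(\mu_n)$ to convergence of logarithmic potentials via Girko's Hermitization, and then exploit the two hypotheses together with the classical circular law for the real Ginibre ensemble $(G_n)$. Concretely, for any test function $\phi \in C_c^\infty(\C)$, the distributional identity $\Delta \log|z - w| = 2\pi\,\delta_w$ together with Green's formula gives
$$
\int \phi(z)\, d\mu_n(z) \;=\; \frac{1}{2\pi n}\int_{\C} \Delta\phi(w)\, \log\bigl|\det(A_n - w\,\Id)\bigr|\, dw,
$$
and the analogous identity holds with $G_n$ in place of $A_n$. Subtracting, one obtains
$$
\int \phi \, d\mu_n - \int \phi \, d\mu_{G_n} \;=\; \frac{1}{2\pi}\int_{\C} \Delta\phi(w)\, T_n(w)\, dw,
$$
where $T_n(w) := \frac{1}{n}\log|\det(A_n - w\,\Id)| - \frac{1}{n}\log|\det(G_n - w\,\Id)|$. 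Since $\Delta\phi$ is bounded and compactly supported, and hypothesis (ii) supplies $T_n(w)\to 0$ in probability for almost every $w$, the task reduces to justifying interchange of limit and integral.

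The key step is a uniform integrability estimate for $T_n(w)$ as a function of $w$ in the support of $\Delta\phi$. First I would decompose $\frac{1}{n}\log|\det(M - w\,\Id)| = \frac{1}{n}\sum_i \log s_i(M - w\,\Id)$ (for $M = A_n$ and $M = G_n$) into contributions from singular values in $(0,1/K)$, $[1/K,K]$, and $(K,\infty)$. The bulk range contributes a quantity bounded by $\log K$. The positive tail is dominated by $\frac{1}{2n}\log\det(\Id + (M - w\,\Id)^*(M - w\,\Id))$ and hence by an affine function of $\frac{1}{n}\|M\|_{\HS}^2 + |w|^2$; hypothesis (i) controls this for $A_n$ and Markov's inequality for the sub-Gaussian matrix $G_n$. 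The small-singular-value contribution is handled via the layer-cake identity $|\log s| \leq \int_0^{1/K} t^{-1}\mathbf{1}_{\{s<t\}}\, dt + \log K$ and Fubini in $w$: for fixed $t$, $\int_{\supp\Delta\phi}\Prob\{s_{\min}(M-w\,\Id)<t\}\, dw$ can be bounded uniformly in $n$ by classical bounds on distances from a Gaussian vector to a random subspace (applied to $G_n$) and, more delicately, by a corresponding bound for $A_n$ that follows from hypothesis (ii) combined with the same bound for $G_n$. Putting these together, for every $\varepsilon > 0$ one can choose $K$ so that the contributions from the two tails are less than $\varepsilon$ with probability $1-o(1)$ uniformly in $n$; dominated convergence then yields $\int \Delta\phi(w)\, T_n(w)\, dw \to 0$ in probability.

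The main obstacle is the small-singular-value tail for $A_n$: the theorem assumes no direct quantitative invertibility bound on $A_n$ itself, so the naive bound on $\int \Prob\{s_{\min}(A_n - w\,\Id) < t\}\, dw$ is not immediately available. The standard Tao--Vu workaround is to note that one only needs integrability in probability, not in expectation; thus, after truncating $T_n$ to the event $\{s_{\min}(A_n - w\,\Id) \geq t\}$ and using hypothesis (ii), the complementary contribution can be absorbed in the $o(1)$ probability error. Hypothesis (i) separately ensures tightness of $(\mu_n)$, which allows extending the test function class from $C_c^\infty(\C)$ to the space of all bounded continuous functions.

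Finally, the classical circular law for the real Ginibre ensemble asserts that $(\mu_{G_n})$ converges weakly in probability to the uniform measure on the unit disc of $\C$. Combining this with $\int\phi\,d\mu_n - \int\phi\,d\mu_{G_n} \to 0$ in probability for every $\phi \in C_c^\infty(\C)$, and using tightness to upgrade to weak convergence in probability on all of $C_b(\C)$, yields the stated conclusion.
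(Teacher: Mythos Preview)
The paper does not prove this statement; it is quoted from \cite{TV2010} and used as a black box, so there is no in-paper argument to compare against.

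Your scaffolding (Green's formula, reduce to $\int\Delta\phi\cdot T_n\to 0$ in probability, then justify the limit--integral exchange, then invoke the circular law for $G_n$) is correct, but the uniform-integrability step has a genuine gap. You work with the singular-value representation $\frac{1}{n}\sum_i\log s_i(M-w\,\Id)$ and try to bound the contribution of small $s_i$ via $\int_{\supp\Delta\phi}\Prob\{s_{\min}(A_n-w\,\Id)<t\}\,dw$. As you yourself note, the hypotheses give no such bound on $A_n$, and your proposed fix --- truncate to $\{s_{\min}\ge t\}$ and absorb the remainder via hypothesis~(ii) --- does not close the gap: hypothesis~(ii) is a pointwise-in-$w$ statement about the scalar $T_n(w)$ and supplies no control whatsoever on the Lebesgue measure of the set of $w$ where $s_{\min}(A_n-w\,\Id)$ is small, nor on the $L^1(dw)$ norm of the truncated piece.

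The actual Tao--Vu argument bypasses singular values at this stage and uses the \emph{eigenvalue} representation $\frac{1}{n}\log|\det(A_n-w\,\Id)|=\int\log|w-\lambda|\,d\mu_n(\lambda)$. For any compact $K\subset\C$ and measurable $E\subset K$, Fubini gives
\[
\int_E\!\int\log^-|w-\lambda|\,d\mu_n(\lambda)\,dw
=\int\Big(\int_E\log^-|w-\lambda|\,dw\Big)d\mu_n(\lambda)
\le\sup_{\lambda\in\C}\int_E\log^-|w-\lambda|\,dw,
\]
and the right-hand side is a \emph{deterministic} quantity tending to $0$ with the Lebesgue measure of $E$, by local integrability of $\log|\cdot|$. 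Combined with the $\log^+$ bound from hypothesis~(i) (using $\frac{1}{n}\sum_i|\lambda_i|^2\le\frac{1}{n}\|A_n\|_{HS}^2$), this yields uniform integrability in probability of $w\mapsto\frac{1}{n}\log|\det(A_n-w\,\Id)|$ over $K$, and a dominated-convergence-in-probability lemma finishes. The singular-value decomposition you propose is the right tool for \emph{verifying} hypothesis~(ii) in concrete models, but not for proving the replacement principle itself.
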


\medskip

Theorem~\ref{alryywtvcutwercuqyt} together with the replacement principle imply
\begin{cor}[A sufficient condition for the circular law]\label{aofiffaifuhbvjavbajv}
Fix any constants $C> 0$ and $\varepsilon\in(0,1]$, and 
assume that
\begin{equation}\label{akhvcutwcruqtweufyqw}
n^\varepsilon\sigma_n^*\leq\sigma_n\quad\mbox{and}\quad
\sigma_n\leq C\quad\mbox{for all large $n$.}
\end{equation}
Assume further that for almost every $z\in\C$ the sequence of numbers
$$
n^{\varepsilon}\,
\Big(\frac{\sqrt{n}\,\sigma_n^*}{\sigma_n}\Big)^2\,\log(n)\;\sup\limits_{x\geq 0}\big|\nu_{A_n}((-\infty,x])
-\nu_{G_n}((-\infty,x])\big|,\quad n\geq 1,
$$
converges to zero in probability.
Then the sequence $(\mu_n)_{n=1}^\infty$ of empirical spectral distributions
of matrices $A_n$
converges weakly in probability to the uniform measure on the unit disc of the complex plane.
\end{cor}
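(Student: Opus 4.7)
The plan is to invoke the replacement principle above. The first hypothesis ($\tfrac{1}{n}\|A_n\|_{HS}^2$ bounded in probability) is immediate from $\Exp\,\|A_n\|_{HS}^2=\sum_{i=1}^n \Exp\,\|\row_i(A_n)\|_2^2\leq n\sigma_n^2\leq nC^2$ and Markov. The substantive content is the second hypothesis: writing $\tfrac{1}{n}\log|\det(M-z\,\Id)|=\tfrac{1}{2}\int_0^\infty\log x\,d\nu_M(z)(x)$ for $M\in\{A_n,G_n\}$, I want to show that
$$D_n(z):=\int_0^\infty \log x\,d\bigl(\nu_{A_n}(z)-\nu_{G_n}(z)\bigr)(x)$$
converges to $0$ in probability for a.e.\ $z\in\C$. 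It suffices to treat a fixed $z$ with $|z|>0$.

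The approach is to truncate $D_n(z)$ to a middle window $[\alpha_n,\beta_n]$ on which the Kolmogorov-distance hypothesis gives direct control, and to argue that both tails are negligible. I would set $\alpha_n:=|z|^2\exp(-2n^{\varepsilon/2}(\sqrt{n}\sigma_n^*/\sigma_n)^2)$ and $\beta_n:=(\log\log n)^2$. Integration by parts on the middle window yields
$$\Big|\int_{\alpha_n}^{\beta_n}\log x\,d(\nu_{A_n}(z)-\nu_{G_n}(z))(x)\Big|\leq C'\bigl(|\log\alpha_n|+\log\beta_n\bigr)\cdot\sup_{x\geq 0}\big|\nu_{A_n}(z)((-\infty,x])-\nu_{G_n}(z)((-\infty,x])\big|,$$
and since $|\log\alpha_n|$ is of order $n^{\varepsilon/2}(\sqrt{n}\sigma_n^*/\sigma_n)^2$ while the Kolmogorov-distance multiplier in the corollary contains the stronger prefactor $n^{\varepsilon}(\sqrt{n}\sigma_n^*/\sigma_n)^2\log n$, the middle contribution vanishes in probability.

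The upper tail is controlled by a Markov argument: $\nu_{A_n}(z)([\beta_n,\infty))\leq\beta_n^{-1}\cdot\tfrac{1}{n}\|A_n-z\,\Id\|_{HS}^2=O(\beta_n^{-1})$ in probability, while Proposition~\ref{itfufytcuqtrcutqwxi} together with $\sigma_n\leq C$ gives $\log\|A_n-z\,\Id\|^2=O(\log\log n)$ on $\Event_{\text{\tiny\ref{itfufytcuqtrcutqwxi}}}$; the $G_n$ case is classical. Hence this contribution is $O(\log\log n/\beta_n)\to 0$. For the lower tail of $\nu_{A_n}(z)$, Theorem~\ref{alryywtvcutwercuqyt} applied with exponent $\varepsilon/2$ (whose hypotheses are verified for large $n$ using $\sigma_n^*\leq n^{-\varepsilon}\sigma_n\to 0$ and $\sigma_n\leq C$, so any $R\geq C/|z|$ works) gives $s_{\min}(A_n-z\,\Id)\geq |z|\exp(-n^{\varepsilon/2}(\sqrt{n}\sigma_n^*/\sigma_n)^2)$ with probability $\to 1$, so $[0,\alpha_n]$ contains no squared singular value of $A_n-z\,\Id$ on that event; for $\nu_{G_n}(z)$, any polynomial lower bound $s_{\min}(G_n-z\,\Id)\geq n^{-B}$ (classical for Ginibre) combined with $\alpha_n\leq\exp(-n^{\varepsilon/2})$ makes this contribution vanish as well.

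The main obstacle, and in fact the essence of the argument, is the middle-band estimate: one must exchange the logarithmic cost $|\log\alpha_n|$ of truncating down to the smallest singular value produced by Theorem~\ref{alryywtvcutwercuqyt} against the Kolmogorov-distance hypothesis of the corollary. Applying Theorem~\ref{alryywtvcutwercuqyt} with exponent $\varepsilon/2$ rather than $\varepsilon$ is precisely what produces the extra factor $n^{\varepsilon/2}\log n$ of slack and explains the exponent $n^{\varepsilon}$ appearing in the Kolmogorov-distance hypothesis of the corollary. Collecting the four estimates gives $D_n(z)\to 0$ in probability for a.e.\ $z$, and the replacement principle concludes the proof.
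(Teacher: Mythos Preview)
Your proposal is correct and follows essentially the same route as the paper: verify the Hilbert--Schmidt bound, use Theorem~\ref{alryywtvcutwercuqyt} to truncate the log-determinants at an exponentially small threshold, control the remaining integral by the Kolmogorov distance, and conclude via the replacement principle. The paper compresses your middle-window and tail estimates into a single citation of \cite[Lemma~4.3]{JJLO}, which packages exactly the integration-by-parts bound you wrote out; it also applies Theorem~\ref{alryywtvcutwercuqyt} with the full exponent $\varepsilon$ rather than $\varepsilon/2$, since the $\log n$ already present in the hypothesis supplies the needed slack---your halving is harmless but not required.
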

\begin{proof}
By Theorem~\ref{alryywtvcutwercuqyt}, for every non-zero $z\in\C$
and all large $n$,
$$
\Prob\bigg\{
s_{\min}(A_n-z\,\Id)\geq |z|\,\exp\bigg(-n^{\varepsilon}\,\Big(\frac{\sqrt{n}\,\sigma_n^*}{\sigma_n}\Big)^2\bigg)
\bigg\}\geq 1-\frac{1}{n},
$$
and hence
$$
\sum_{i:\, s_i(A_n-z\,\Id)\geq |z|\,\exp\big(-n^{\varepsilon}\,
\big(\frac{\sqrt{n}\,\sigma_n^*}{\sigma_n}\big)^2\big)}\log s_i(A_n-z\,\Id)
=\log\big|\det\big(A_n-z\,\Id\big)\big|
$$
with probability $1-o(1)$.
Standard estimates on the smallest singular value of shifted Gaussian matrices
(see \cite{ICM} and references therein)
imply that, similarly,
$$
\sum_{i:\, s_i(G_n-z\,\Id)\geq |z|\,\exp\big(-n^{\varepsilon}\,
\big(\frac{\sqrt{n}\,\sigma_n^*}{\sigma_n}\big)^2\big)}\log s_i\big(G_n-z\,\Id\big)
=\log\big|\det\big(G_n-z\,\Id\big)\big|
$$
with probability $1-o(1)$.
Combining these bounds with \cite[Lemma~4.3]{JJLO}, we get that
with probability $1-o(1)$,
\begin{align*}
&\bigg|\frac{1}{n}\log\big|\det\big(A_n-z\,\Id\big)\big|
-\frac{1}{n}\log\big|\det\big(G_n-z\,\Id\big)\big|\bigg|\\
&\hspace{2cm}\leq C'\,n^{\varepsilon}\,
\Big(\frac{\sqrt{n}\,\sigma_n^*}{\sigma_n}\Big)^2\,\log(n)\;\sup\limits_{x\geq 0}\big|\nu_{A_n}((-\infty,x])
-\nu_{G_n}((-\infty,x])\big|,
\end{align*}
where $C'>0$ is a universal constant.

The proof is accomplished by an application of the replacement principle
(note that, in view of Bernstein's inequality,
$\frac{1}{n}\|A_n\|_{HS}^2$ is bounded from above in probability).
\end{proof}

Corollary~\ref{aofiffaifuhbvjavbajv} establishes the limiting circular law whenever
the sequence
$$
\sup\limits_{x\geq 0}\big|\nu_{A_n}((-\infty,x])
-\nu_{G_n}((-\infty,x])\big|,\quad n\geq 1,
$$
decays to zero faster than
$$
\bigg(n^{\varepsilon}\,
\Big(\frac{\sqrt{n}\,\sigma_n^*}{\sigma_n}\Big)^2\,\log(n)\bigg)^{-1},
$$
for some fixed $\varepsilon>0$.
We shall apply the corollary in the setting of non-Hermitian periodic band matrices.
Let us recall the model. For every $n$ we let $w_n\leq n/2$ be a positive integer
(the bandwidth), and let $B_n$ be an $n\times n$
matrix with mutually independent entries where
the $(i,j)$--th entry is a standard real
Gaussian if and only if $(i-j) \mod n\leq w_n$ or $(j-i) \mod n\leq w_n$,
and all other entries are zeros.
Convergence of the Stieltjes transform
of the spectrum of $\frac{1}{2w_n+1}(B_n-z\,\Id)(B_n-z\,\Id)^*$ was verified in \cite{JS}.
Following the argument of \cite{JS},
asymptotic properties of the singular spectrum
of {\it block-band} matrices were studied \cite{JJLO} as a means to derive a limiting law for the
empirical spectral measure.
It can be verified by going through the proof in \cite{JJLO} that the 
following estimate is valid in the periodic band matrix setting as well.
\begin{theorem}[{Essentially proved in \cite{JJLO}; see \cite[Lemma~4.4]{JJLO}}]
There is a universal constant $c>0$ with the following property.
Let $(w_n)_{n\geq 1}$ be a sequence of integers where each $w_n$ satisfies
$$
c^{-1}\,n^{32/33}\log n\leq w_n< cn.
$$
Then for every fixed $z\in\C$ we have
$$
\sup\limits_{x\geq 0}\big|\nu_{\frac{1}{\sqrt{2w_n+1}}B_n}((-\infty,x])
-\nu_{G_n}((-\infty,x])\big|
\leq C'\,\bigg(\frac{n\log n}{w_n^2}\bigg)^{1/31}
$$
with probability $1-o(1)$,
where $C'>0$ may only depend on $z$.
\end{theorem}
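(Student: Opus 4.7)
The plan is to essentially adapt the argument of \cite[Lemma~4.4]{JJLO} to the periodic setting and verify that the quantitative steps there go through with the same exponents under the periodic band variance profile. First, I would Hermitize: introduce
$$
H_z(M) := \begin{pmatrix} 0 & M - z\,\Id \\ (M - z\,\Id)^* & 0 \end{pmatrix},
$$
so that the spectrum of $H_z(M)$ consists of the signed singular values of $M - z\,\Id$. In this way $\nu_{\frac{1}{\sqrt{2w_n+1}}B_n}(z)$ and $\nu_{G_n}(z)$ are obtained by pushing forward (under $x\mapsto x^2$) the empirical spectral distributions of the two $2n\times 2n$ Hermitian matrices. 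The object to compare is then the Stieltjes transforms $m_B(\eta)$ and $m_G(\eta)$ of these two Hermitized matrices on the upper half plane.

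Second, I would derive an approximate matrix Dyson (self-consistent) equation for the resolvent of the Hermitization of $\frac{1}{\sqrt{2w_n+1}}B_n - z\,\Id$. The variance profile here is the circulant matrix $S_{ij} = \frac{1}{2w_n+1}\mathbf{1}_{(i-j)\bmod n\leq w_n}\cdot\mathbf{1}_{(j-i)\bmod n\leq w_n}$, which is translation invariant along the diagonals; this is a special case of the block-band structure treated in \cite{JJLO}, so the general self-consistent equation machinery of that paper applies verbatim. Schur complement identities for the diagonal entries of the resolvent, together with concentration of quadratic forms, give $m_B(\eta)$ as an approximate solution to the same scalar self-consistent equation satisfied by $m_G(\eta)$, with an error term of size a small polynomial in $n/w_n^2$ and $1/\Im(\eta)$. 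A stability analysis of the scalar equation (essentially a perturbation bound for its fixed point), as carried out in \cite{JJLO}, upgrades this to a bound of the form $|m_B(\eta) - m_G(\eta)| \leq (n\log n/w_n^2)^{\alpha}\cdot \Im(\eta)^{-\beta}$ on a suitable range of $\eta$.

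Third, I would convert the Stieltjes transform estimate into a Kolmogorov distance estimate between the singular value distributions via a Bai-type inequality, optimizing the imaginary part $\Im(\eta)$ of the contour against the behaviour of the Stieltjes transform near the real axis. The resulting trade-off produces the rate $(n\log n/w_n^2)^{1/31}$, whose exponent is dictated by the balance in the \cite{JJLO} estimate and is the reason the lower bound on $w_n$ must be a sufficiently high power of $n$ (the $32/33$ ensures that the rate is both meaningful and usable by Corollary~\ref{aofiffaifuhbvjavbajv}). The main obstacle, and what really needs to be verified line by line rather than merely cited, is that the error in the self-consistent equation and the stability constant both remain controlled near the edge and, crucially, in a neighbourhood of $x=0$, since the Kolmogorov distance compares the CDFs over all $x\geq 0$ including the origin where the density of squared singular values may be singular. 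Here the periodic (circulant) structure is in fact more symmetric than the general block-band model in \cite{JJLO}, so all the required estimates on norms of sub-blocks, concentration of quadratic forms, and stability near zero hold with at least the same quality, yielding the claimed bound with probability $1-o(1)$.
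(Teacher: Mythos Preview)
Your proposal is correct and matches the paper's own treatment: the paper does not give a proof of this theorem at all, but simply states that ``it can be verified by going through the proof in \cite{JJLO}'' that the estimate holds in the periodic band setting, and your sketch (Hermitization, approximate matrix Dyson/self-consistent equation with concentration of quadratic forms, stability analysis, then a Bai-type Stieltjes-to-Kolmogorov conversion) is precisely the \cite{JJLO} argument the paper is invoking.
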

As a combination of the last theorem and Corollary~\ref{aofiffaifuhbvjavbajv}, we obtain
Corollary~\ref{ajhfbaifhbijvnjgvaigvi}.

\end{document}